\theoremstyle{plain}
\newtheorem{theorem}{Theorem}[section]
\newtheorem{lem}[theorem]{Lemma}
\newtheorem{pro}[theorem]{Proposition}
\newtheorem{cor}[theorem]{Corollary}
\theoremstyle{definition}
\newtheorem{rem}[theorem]{Remark} 
\def\Z{{\mathbb{Z}}}
\def\C{{\mathbb{C}}}
\def\R{{\mathbb{R}}}
\def\H{{\mathbb{H}}}
\def\N{{\mathbb{N}}}
\def\P{{\mathbb{P}}}
\def\Ker{{\rm Ker}}
\def\nsmallskip{\smallskip\noindent}
\def\bbigskip{\bigskip\bigskip}
\def\nbigskip{\bigskip\noindent}
\def\nbbigskip{\bbigskip\noindent}
\def\nmedskip{\medskip\noindent}
\def\buildunder#1#2{\mathrel{\mathop{\kern0pt #2} \limits_{#1}}}
\def\buildover#1#2{\buildrel#1\over#2}
\def\Aut{\hbox{Aut}}
\def\qq{/\kern-.185em /}
\begin{document}

\title[taut Stein surfaces ]{A classification of taut, Stein surfaces with a proper $\,\R$-action}

\author[Iannuzzi]{A. Iannuzzi}
\author[Trapani]{S. Trapani}
\address{Andrea Iannuzzi and Stefano Trapani: Dip.\ di Matematica,
Universit\`a di Roma  ``Tor Vergata", Via della Ricerca Scientifica,
I-00133 Roma, Italy.} 
\email{iannuzzi@mat.uniroma2.it, trapani@mat.uniroma2.it}

\thanks {\ \ {\it Mathematics Subject Classification (2010):} 32M5, 32Q28, 32Q45}

\thanks {\ \ {\it Key words}: taut, Kobayashi hyperbolic and Stein manifolds,
globalizations of local actions, Hartogs domains}

\begin{abstract}
We present a classification of 2-dimensional, taut, Stein manifolds with
a proper $\,\R$-action. For such manifolds
the globalization with respect to the induced local $\,\C$-action 
turns out to be  Stein. As an application we determine all 2-dimensional
taut, non-complete, Hartogs domains over a Riemann surface.  
\end{abstract}


\maketitle

\section{Introduction}

\smallskip
Let the group $\,(\R, +)\,$ act on a complex manifold $\,X\,$ by biholomorphism. Then, by integrating
the associated vector field one obtains a local action of $\,(\C,+)$.
For taut, Stein manifolds,  the universal globalization with respect to such a local action is Hausdorff (\cite{Ian}). That is, there
exists a complex $\,\C$-manifold $\,X^*\,$ containing $\,X\,$ as an $\,\R$-invariant 
domain such that every $\,\R$-equivariant holomorphic map from $\,X\,$ onto a complex $\,\C$-manifold
extends $\,\C$-equivariantly on $\,X^*$. Recently C. Miebach and K. Oeljeklaus have shown  that if $\,X\,$ is 2-dimensional and the $\,\R$-action is proper, then
the $\,\C$-action on $\,X^*\,$ is also proper,
implying
that the globalization $\,X^*\,$ can be regarded as a holomorphic principal $\,\C$-bundle
over the Riemann surface $\,S:=X^*/\C\,$  (\cite{MiOe}). 

Our main goal here is to present a classification of all such $\,X$, up to
$\,\R$-equivariant biholomorphism.
We first exploit the above bundle structure in order to give a more precise description of $\,X^*$.
In the case when  $\,S\,$ is non compact,  $\,X^*\,$ is $\,\C$-equivariantly biholomorphic
to $\,\C \times S$, where $\,\C\,$ acts by translations on the first factor.
 If the base $\,S\,$ is compact, then it is hyperbolic and $\,X^*\,$ turns out to be
 $\,\C$-equivariantly biholomorphic
to a certain twisted bundle $\,\C \times \Delta/\Gamma$, where $\,\Delta\,$  is the unit disk in  $\,\C\,$
and $\,\Gamma\,$ is the group of deck transformations of the universal covering
$\,\Delta \to S$. Then, by using a result of T. Ueda (\cite{Ued}) as the main ingredient,
we prove the following 

 \nbigskip
{\bf Theorem.} {\it  Let $\,X\,$ be a 2-dimensional, taut, Stein manifold with a proper
$\,\R$-action. Then its universal globalization $\,X^*\,$ is Stein.}

\bigskip
Note that in the more general context of Stein $\,\R$-manifolds it is an open problem to determine whether $\,X^*\,$ is always Stein or at least Hausdorff (cf.$\,$\cite{HeIa}, \cite{CIT}, 
\cite{IST}).
Once $\,X^*\,$ is understood, we look at the realization of $\,X\,$ as an $\,\R$-invariant
domain of $\,X^*\,$ and the following question turns out to be crucial.
Given  an upper semicontinuous function
$\,a: \C \to \{-\infty\} \cup \R$, consider the  ($\R$-invariant) subdomain of  $\,\C^2\,$ 
defined by 
$$\,\Omega_a:= \{\,(z,w) \in \C^2 \ : \ a(w)<{\rm Im}\,z \}\,.$$
Under which conditions on $\,a\,$ is $\,\Omega_a\,$  taut ?
Since taut domains in $\,\C^n\,$ are Stein, the function $\,a\,$ is necessarily 
subharmonic.
Moreover $\,\Omega_a\,$ cannot contain 
 complex lines, therefore $\,a(w) > -\infty\,$ for all $\,w \in \C$.

Partial answers to this problem can be found, e.g.,  in \cite{Yu} and  \cite{Gau}.
Here the following necessary and sufficient condition 
is obtained by using tools of potential theory
(Thm.$\,$\ref{TAUTT}).
 
 \nbigskip
{\bf Theorem.} {\it  The domain $\,\Omega_a\,$ is taut if and only if 
 $\,a\,$ is real valued, subharmonic, non-harmonic and continuous.}

\bigskip

This result put us in the position  of showing  that 
the  2-dimensional manifolds listed below are
all taut and Stein.

\smallskip
\nmedskip
{\bf Type CH$\ $} {\it  If  $\,S\,$ is compact hyperbolic, say
$\,S=\Delta/\Gamma$, the models are certain twisted bundles
$\,H \times \Delta/\Gamma$, with $\,H\,$ a proper, $\,\R$-invariant, connected
strip in $\,\C$.

\nmedskip
{\bf Type NCH$\ $} If  $\,S\,$ is non compact hyperbolic, the models are
 $$\, \{\, (z,p) \in \C \times S : a(p) < {\rm Im}\, \,z < - b(p) \,  \}\,,$$
 where $\,a\,$ and $\, b\,$ are  subharmonic,  continuous functions on $\,S\,$
such that $\, a +b < 0\,$ and  $\, \max \{a(p),b(p)\}  >   - \infty\, $  for all $\,p \in S$.

\nmedskip
{\bf Type NCNH$\ $} If  $\,S=\C\,$ or  $\,S=\C^*$, the models are
$$\,\{\, (z,p) \in \C \times S \ :\  a(p) < {\rm Im} \,z \, \}\quad {\rm or} \quad 
\{\, (z,p) \in \C \times S \ : \ {\rm Im} \,z < -b(p) \, \}\, ,$$
with $\,a$, $\,b\,$   subharmonic, non-harmonic,  real valued,  continuous functions
 on $\,S$.}
 
 \smallskip
 \medskip
On each such manifold let $\,\R\,$ act by translations on the first factor.
Then the classification follows by proving that a  2-dimensional, taut, Stein manifold $\,X\,$
with a proper $\,\R$-action is $\,\R$-equivariantly biholomorphic
to a model as above and its type
depends on compactness and hyperbolicity of the base $\,S\,$
(cf.$\,$Thm.$\,$\ref{MAIN}). 
We recall that in the non compact,
simply connected case, a partial result is obtained in \cite{MiOe}, Theorem\,6.3.

It is worth noting  that  $\,X\,$ turns out to be 
homotopically equivalent to its base $\,S$.
As a consequence, the corresponding type 
is strongly related to the topology of $\,X$.
For instance, $\,X\,$ is of type CH if
and only if  $\,H^2(X,\Z)\not=0\,$ (cf.$\,$Sect.$\,$6).
  
We also wish  to recall   that every taut manifold is Kobayashi
hyperbolic, therefore  its automorphism group 
is a Lie group acting properly on $\,X\,$  (see \cite {Kob},$\,$Thm.$\,$5.4.2).
It follows that there exists a proper $\,\R$-action on $\,X\,$
if and only if the connected component of the identity in $\,Aut(X)\,$
is non compact (cf.$\,$\cite{Hoc}$\,$p.$\,$180,$\,$\cite{MiOe}$\,$Lemma$\,$6.3).

As an  application of the above classification 
we determine all  2-dimensional, taut, non-complete
Hartogs domains over a Riemann surface
(Prop.$\,$\ref{HARTOGS}). For a  characterization of complete Hartogs 
domains see  \cite{ThDu}, \cite{Par}.

   \nmedskip
   
  The paper is organized as follows. 
   In Section 2 we point out a characterization of taut manifolds
 and collect those results which are used in the sequel.
  
 \noindent
 In Section 3 we characterize  those domains 
of the form $\,\Omega_a\, $ which are taut (Thm. \ref{TAUTT}).

 \noindent
 In Section 4 we study models of type CH and show that
  their globalization is
 Stein. We also prove  that if the base $\,S\,$ is compact, then $\,X\,$
 is  $\,\R$-equivariantly biholomorphic to one of these models.

  \noindent
 In Section 5 the analogous results are proved for models of type NCH and NCNH.
  
  \noindent
  In Section 6 we point out that in most 
  cases the type of $\,X\,$ is determined by  the topology of $\,X\,$
  (Cor.\,\ref{TOPOL} and Rem.\,\ref{ESEMPI})
  
  \noindent
  In Section 7 we  classify 
   2-dimensional, taut, non-complete
Hartogs domains over a Riemann surface.

\nbigskip
{\bf Acknowledgments.}
We are grateful to Christian Miebach for valuable remarks and
for pointing out to us a gap in the proof of a previous version of Proposition \ref{TYPECH}.
We also wish to thank Professor Takeo Ohsawa for kindly
indicating to us the result of T. Ueda as a suitable tool for 
a complete proof of such a proposition
and Simone Diverio for his worthy comments.

 \bbigskip

 \hfill


\section{Preliminaries}
 
 \bigskip
 
By definition a complex manifold is taut if and only if every sequence of holomorphic maps  $\,f_n:\Delta \to X\,$ admits  a subsequence which is either converging
uniformly on compact subsets or compactly divergent. If $\,X\,$ is taut, then it is
hyperbolic (\cite{Kob},$\,$Thm.$\,$5.1.3).
 We first recall a result of M. Abate
and give a characterization of taut manifolds.


\bigskip
\begin{theorem}
\label{ABATE} {\rm (\cite{Aba}, Thm. 1.3)}
Let $\,X\,$ be a  complex manifold and $\,X\cup \{\infty\}\,$ its Alexandroff compactification.
Then $\,X\,$ is hyperbolic if and only if $\,Hol(\Delta, X)\,$ is relatively compact in $\,C(\Delta, X
\cup \{\infty\})\,$ with respect to the compact-open topology. 
\end{theorem}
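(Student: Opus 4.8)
The plan is to deduce the statement from the Arzel\`a--Ascoli theorem applied to the space $\,C(\Delta,X\cup\{\infty\})\,$ with the compact-open topology, once we know the target is compact (which it is, as the Alexandroff compactification of a locally compact Hausdorff space). The only subtlety is that $\,X\cup\{\infty\}\,$ need not be metrizable unless $\,X\,$ is second countable, but $\,\Delta\,$ is $\sigma$-compact so one can still argue with the standard compactness criterion: a subset of $\,C(\Delta,Y)\,$ with $\,Y\,$ compact Hausdorff is relatively compact in the compact-open topology if and only if it is equicontinuous. So the real content to be extracted is the equivalence between hyperbolicity of $\,X\,$ and equicontinuity of $\,Hol(\Delta,X)\,$ as a family of maps into $\,X\cup\{\infty\}$.

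First I would prove the forward direction. Assume $\,X\,$ is hyperbolic, i.e.\ the Kobayashi pseudodistance $\,k_X\,$ is a genuine distance. By the distance-decreasing property, every $\,f\in Hol(\Delta,X)\,$ satisfies $\,k_X(f(p),f(q))\le k_\Delta(p,q)=\rho(p,q)$, the Poincar\'e distance. Fix $\,p_0\in\Delta\,$ and a point $\,x_0\in X$. I would split the family according to whether $\,f(p_0)\,$ stays in a fixed $\,k_X$-ball or escapes. Given a neighbourhood $\,U\,$ of $\,\infty\,$ in $\,X\cup\{\infty\}$, its complement $\,K=X\setminus U\,$ is compact. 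For those $\,f\,$ with $\,f(p_0)\in K$, the image $\,f(\Delta_r)\,$ of any Poincar\'e ball $\,\Delta_r\,$ of radius $\,r\,$ around $\,p_0\,$ lies in the $\,k_X$-ball of radius $\,r\,$ about the compact set $\,K$, which is relatively compact since $\,k_X$-balls are relatively compact on a hyperbolic (hence, by Barth's theorem, locally $\,k_X$-metrized) manifold; on this relatively compact set $\,k_X\,$ induces the manifold topology, and distance-decreasing then gives the usual Montel-type equicontinuity at $\,p_0$. For those $\,f\,$ with $\,f(p_0)\notin K$, i.e.\ $\,f(p_0)\in U$, one uses that $\,U\,$ can be taken of the form $\,\{\,k_X(\cdot,x_1)>R\,\}\cup\{\infty\}\,$ for suitable $\,x_1,R$, and then $\,f(\Delta_r)\,$ again stays in a controlled neighbourhood of $\,f(p_0)\,$ in the $\,\infty$-direction by distance-decreasing. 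Patching these two cases yields equicontinuity of the whole family at $\,p_0$, and $\,p_0\,$ was arbitrary.

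For the converse I would argue by contraposition: if $\,X\,$ is not hyperbolic, then there are distinct points $\,x,y\in X\,$ with $\,k_X(x,y)=0$, so for every $\,\varepsilon>0\,$ there is a holomorphic disk $\,f_\varepsilon\in Hol(\Delta,X)\,$ with $\,f_\varepsilon(0)\,$ near $\,x\,$ and $\,f_\varepsilon(t_\varepsilon)\,$ near $\,y\,$ for some $\,t_\varepsilon\,$ with $\,\rho(0,t_\varepsilon)<\varepsilon$; composing with a Poincar\'e automorphism one may instead take two fixed points $\,p,q\in\Delta\,$ and maps $\,g_n\,$ with $\,g_n(p)\to x$, $\,g_n(q)\to y$. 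This family cannot be equicontinuous at $\,p$: a small $\,\rho$-ball about $\,p\,$ would have to be mapped into a small neighbourhood of $\,x\,$ for all large $\,n$, but $\,g_n\,$ moves the nearby point $\,q'\,$ (on the $\,\rho$-geodesic, very close to $\,p$) essentially all the way to $\,y\ne x$. Hence $\,Hol(\Delta,X)\,$ is not equicontinuous, so by Arzel\`a--Ascoli it is not relatively compact in $\,C(\Delta,X\cup\{\infty\})$. I expect the main obstacle to be the careful handling of the point-at-infinity in the equicontinuity estimates --- making precise that ``escaping to $\,\infty$'' interacts correctly with the distance-decreasing property, and that one does not need metrizability of $\,X\cup\{\infty\}\,$ to invoke the compactness criterion; everything else is a routine combination of Barth's theorem (local equivalence of $\,k_X\,$ with the manifold topology) and the Poincar\'e metric estimates.
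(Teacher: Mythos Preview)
The paper does not give its own proof of Theorem~\ref{ABATE}; the result is quoted from \cite{Aba} and used as a black box, so there is nothing in the paper to compare your argument against.

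On the merits of your sketch: the overall strategy (Arzel\`a--Ascoli reduces the question to equicontinuity into the compactification) is the right one, but the forward direction contains a genuine error. You claim that on a hyperbolic manifold ``$k_X$-balls are relatively compact'' and cite Barth's theorem. Barth only says that $k_X$ induces the manifold topology; relative compactness of $k_X$-balls is precisely \emph{complete} hyperbolicity, which is strictly stronger. For example $X=\Delta^2\setminus\{(0,0)\}$ is hyperbolic, yet the points $(1/n,0)$ lie at bounded $k_X$-distance from $(1/2,0)$ while escaping every compact subset of $X$. Both branches of your case split rely on this false assertion (the second one implicitly, since $\{k_X(\cdot,x_1)>R\}\cup\{\infty\}$ need not be a neighbourhood of $\infty$ for exactly the same reason). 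The clean repair is to show directly that the identity $(X,k_X)\to(X\cup\{\infty\},d)$ is uniformly continuous whenever $X$ is hyperbolic --- a short argument by contradiction: if $k_X(a_n,b_n)\to 0$ but $d(a_n,b_n)\ge\varepsilon$, pass to subsequences converging in the compact space $X\cup\{\infty\}$ and use Barth to rule out every configuration of limits --- and then compose with the distance-decreasing inequality $k_X(f(p),f(q))\le\rho(p,q)$. Your converse is also loose: from $k_X(x,y)=0$ one only obtains \emph{chains} of analytic disks, not a single disk passing near both $x$ and $y$; the standard remedy is to work with the infinitesimal Kobayashi--Royden pseudometric and produce $f_n\in Hol(\Delta,X)$ with $f_n(0)=x_0$ fixed and $\|f_n'(0)\|\to\infty$, which immediately violates equicontinuity at $0$.
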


\bigskip
Note that  since $\,X\cup \{\infty\}\,$ is metrizable,  the compact open topology
of $\,C(\Delta, X \cup \{\infty\})\,$
coincides with the topology of uniform convergence on compact subsets.


\bigskip
\begin{pro}
\label{CHAR}
For a complex manifold $\,X\,$ the following conditions 
are equivalent.

\medskip
\item{\rm (i)} $\,X\,$ is taut,

\medskip
\item{\rm (ii)} for every sequence of
holomorphic maps $\,f_n : \Delta \rightarrow X\,$ such that
$\,f_n(\zeta_0) \to x_0\,$ for some $\,\zeta_0 \in \Delta \,$
and $\,x_0 \in X$, there exists a  subsequence 
converging uniformly on compact sets of $\,\Delta$. 
\end{pro}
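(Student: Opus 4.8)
The plan is to prove the equivalence (i)$\Leftrightarrow$(ii) by showing that the only way a subsequence of a pointwise-bounded family of holomorphic maps can fail to converge locally uniformly is by being compactly divergent, which is exactly what tautness rules out. The implication (i)$\Rightarrow$(ii) is essentially immediate: if $X$ is taut and $f_n:\Delta\to X$ satisfies $f_n(\zeta_0)\to x_0\in X$, then by definition some subsequence $f_{n_k}$ either converges locally uniformly or is compactly divergent; but a compactly divergent sequence would force $f_{n_k}(\zeta_0)$ to leave every compact subset of $X$, contradicting $f_{n_k}(\zeta_0)\to x_0$. Hence the subsequence converges uniformly on compact sets, giving (ii).

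For the converse (ii)$\Rightarrow$(i), the idea is to use Abate's theorem (Theorem~\ref{ABATE}). First, observe that (ii) implies $X$ is hyperbolic: a sequence $f_n:\Delta\to X$ can always be replaced by a subsequence for which $f_n(0)$ either converges in $X$ or leaves every compact set; in the former case (ii) applies and we are done, so we only need tautness-type behaviour and in particular hyperbolicity follows once we handle the general dichotomy. More directly, I would argue: since $X$ satisfies (ii) it is in particular a manifold on which bounded sequences of disks subconverge, so by standard arguments $X$ is hyperbolic, and Theorem~\ref{ABATE} tells us $Hol(\Delta,X)$ is relatively compact in $C(\Delta, X\cup\{\infty\})$ with the topology of uniform convergence on compact subsets. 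Now take any sequence $f_n:\Delta\to X$. By relative compactness, pass to a subsequence $f_{n_k}\to f$ in $C(\Delta, X\cup\{\infty\})$ for some continuous $f:\Delta\to X\cup\{\infty\}$. The key point is that the set $U:=f^{-1}(X)$ is open in $\Delta$ and, by the identity-type/Hurwitz argument for the Alexandroff limit, $f$ restricted to $U$ is holomorphic while $f\equiv\infty$ on $\Delta\setminus U$; one shows $U$ is also closed, hence $U=\Delta$ or $U=\emptyset$. If $U=\emptyset$ then $f_{n_k}$ is compactly divergent. If $U=\Delta$, then $f_{n_k}\to f$ with $f(\Delta)\subset X$, and picking any $\zeta_0$ we get $f_{n_k}(\zeta_0)\to f(\zeta_0)=:x_0\in X$, so hypothesis (ii) yields a further subsequence converging uniformly on compact sets of $\Delta$ — in fact to $f$. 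Either way the defining dichotomy for tautness holds, so $X$ is taut.

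The step I expect to be the main obstacle is the clean proof that $U=f^{-1}(X)$ is both open and closed in $\Delta$ for the limit $f$ of a locally uniformly convergent sequence in $C(\Delta, X\cup\{\infty\})$ — i.e. that a "mixed" limit, holomorphic into $X$ on part of $\Delta$ and equal to $\infty$ elsewhere, cannot occur. This is the analogue, in the Alexandroff-compactification setting, of the classical fact that a locally uniform limit of holomorphic maps is either everywhere finite or everywhere infinite; it can be handled by working in local charts near a boundary point of $U$ and using that $f_{n_k}(\zeta)$ stays in a fixed compact neighbourhood for $\zeta$ near such a point, contradicting $f(\zeta)=\infty$ nearby, together with the connectedness of $\Delta$. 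Once this topological dichotomy is in place, everything else is routine bookkeeping with Abate's theorem and the hypothesis (ii).
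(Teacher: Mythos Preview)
Your overall strategy matches the paper's: (i)$\Rightarrow$(ii) is immediate, and (ii)$\Rightarrow$(i) goes through Abate's theorem (Theorem~\ref{ABATE}). Two points deserve comment.

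First, your hyperbolicity argument is incomplete. Your first attempt (passing to a subsequence along the values $f_n(0)$ and invoking the taut dichotomy) is circular, since the dichotomy is what you are trying to prove and Abate's theorem itself requires hyperbolicity as input. Your second attempt (``by standard arguments'') gestures at the right idea but does not supply it. The paper gives the clean version: if $X$ were not hyperbolic there would exist $x_0\in X$ and a nonzero $v\in T_{x_0}X$ with $K_X(v)=0$; by definition of the Kobayashi infinitesimal pseudometric this produces holomorphic maps $f_n:\Delta\to X$ with $f_n(0)=x_0$ and $\|f_n'(0)\|\to\infty$. Hypothesis (ii) then forces a locally uniformly convergent subsequence, which contradicts the blow-up of the derivatives at $0$.

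Second, the step you flag as the ``main obstacle'' --- showing that $U=f^{-1}(X)$ is clopen in $\Delta$ --- is in fact unnecessary, and the paper avoids it entirely. Once Abate's theorem gives a subsequence $f_{n_k}$ converging in $C(\Delta, X\cup\{\infty\})$ to some continuous $f$, there are only two possibilities: either $f\equiv\infty$, in which case $f_{n_k}$ is compactly divergent; or $f(\zeta_0)\in X$ for \emph{some} $\zeta_0\in\Delta$. In the latter case $f_{n_k}(\zeta_0)\to f(\zeta_0)=:x_0\in X$, and hypothesis (ii) applies directly to produce a further subsequence converging locally uniformly to a holomorphic map $\Delta\to X$. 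You never need to know in advance that $f(\Delta)\subset X$; condition (ii) does that work for you. So the Hurwitz-type clopen argument you were worried about can simply be dropped.
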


\begin{proof}
Condition (ii) is clearly satisfied if $\,X\,$ is taut.
Assume that (ii) holds true. We first show that $\,X\,$ is hyperbolic.
Let $\,K_X\,$ denote the Kobayashi infinitesimal pseudo metric and assume
by contradiction that there exists $\,x_0\,$ in $\,X\,$ and a non zero vector $\,v\,$
in the tangent space $\,T_{x_0}X\,$ such that  $\,K_X(v) = 0$.
Then, by definition of $\,K_X\,$ 
there exists a sequence  $\,f_n : \Delta \rightarrow X\,$ of holomorphic maps, such that $\,f_n(0) = x_0\,$ and  $\,\| {f'}_n(0)\| \to +\infty$,
 where $\,\| \ \,\|\,$ denotes a chosen norm on $\,T_{x_0}X$.
 However, by assumption up to subsequence $\,f_n\,$ converges
 uniformly on compact subsets to a holomorphic map from $\,\Delta\,$ to $\,X$,
giving  a contradiction.
Thus $\,X\,$ is Kobayashi hyperbolic.  

Finally, let $\,X\cup \{\infty \}\,$ be the Alexandroff compactification 
of $\,X$. As a consequence of Theorem  \ref{ABATE}, 
up to subsequence every  sequence of holomorphic maps 
$\,f_n : \Delta \rightarrow X\,$ converges uniformly on compact subsets  either to the constant map of value $\,{\infty}\,$  or there exists 
$\,\zeta_0 \in \Delta \,$ and $\,x_0 \in X\,$
such that $\,f_n(\zeta_0) \to x_0\,$. In the latter case (ii) implies that
there exists a  subsequence converging uniformly on compact subsets of $\,\Delta$. 
Hence  $\,X\,$ is taut.
\end{proof}  

\nbigskip
As a corollary to Proposition \ref{CHAR}, one has
\bigskip


\begin{cor}
\label{SUBLEVEL}
Let $\,\alpha\,$ be a plurisubharmonic,  continuous
function on a taut manifold $\,X$. Then the sublevel
sets of $\,\alpha\,$ are taut. 
\end{cor}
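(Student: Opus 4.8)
The plan is to deduce the statement directly from the characterization of tautness in Proposition \ref{CHAR}, which reduces tautness to a normal families property with a single interior convergence condition. Let $\,X_c := \{\,x \in X : \alpha(x) < c\,\}\,$ for some $\,c \in \R\,$, and let $\,f_n : \Delta \to X_c\,$ be a sequence of holomorphic maps with $\,f_n(\zeta_0) \to x_0\,$ for some $\,\zeta_0 \in \Delta\,$ and $\,x_0 \in X_c\,$. Since $\,X_c\,$ is an open subset of $\,X\,$, the maps $\,f_n\,$ are in particular holomorphic into the taut manifold $\,X\,$, so by Proposition \ref{CHAR} (applied to $\,X\,$) there is a subsequence, still denoted $\,f_n\,$, converging uniformly on compact subsets of $\,\Delta\,$ to a holomorphic map $\,f : \Delta \to X$. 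It remains to check that $\,f(\Delta) \subseteq X_c\,$, for then the convergence takes place inside $\,X_c\,$ and Proposition \ref{CHAR} gives that $\,X_c\,$ is taut.

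The inclusion $\,f(\Delta) \subseteq X_c\,$ is where plurisubharmonicity enters. First, $\,f(\zeta_0) = x_0 \in X_c\,$, so $\,\alpha(f(\zeta_0)) < c$, hence $\,\alpha \circ f\,$ is not identically $\,c\,$ and in particular not identically $\,+\infty$; moreover $\,\alpha \circ f\,$ is plurisubharmonic (hence subharmonic, being a function of one complex variable) on $\,\Delta\,$ as the composition of the plurisubharmonic function $\,\alpha\,$ with the holomorphic map $\,f$. Now observe that for each fixed $\,\zeta \in \Delta\,$ one has $\,\alpha(f_n(\zeta)) < c$, and since $\,\alpha\,$ is continuous and $\,f_n(\zeta) \to f(\zeta)\,$ in $\,X\,$, passing to the limit yields $\,\alpha(f(\zeta)) \le c\,$ for all $\,\zeta \in \Delta$. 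Thus $\,\alpha \circ f \le c\,$ on $\,\Delta\,$ with strict inequality at $\,\zeta_0$. By the maximum principle for subharmonic functions, a subharmonic function on the connected domain $\,\Delta\,$ that attains its supremum $\,c\,$ at an interior point must be constant equal to $\,c\,$; since $\,\alpha \circ f\,$ is strictly below $\,c\,$ somewhere, it cannot attain the value $\,c\,$ anywhere, so $\,\alpha(f(\zeta)) < c\,$ for all $\,\zeta \in \Delta$. Hence $\,f(\Delta) \subseteq X_c$.

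I do not anticipate a genuine obstacle here; the argument is a routine combination of the characterization in Proposition \ref{CHAR}, continuity of $\,\alpha\,$ to pass to the limit in the non-strict inequality, and the strong maximum principle for subharmonic functions to upgrade it to a strict inequality on all of $\,\Delta$. The only mild point to be careful about is that one needs the limit map $\,f\,$ to land in $\,X_c\,$ and not merely in its closure, which is exactly what the maximum principle supplies, using that $\,\alpha \circ f\,$ is genuinely subharmonic (in particular finite somewhere, so not the constant $\,+\infty$) and that $\,\Delta\,$ is connected. One should also note that the case $\,c = +\infty\,$ is trivial ($\,X_c = X\,$) and that if $\,X_c\,$ is empty there is nothing to prove, so we may assume $\,c \in \R\,$ and $\,X_c \neq \emptyset$.
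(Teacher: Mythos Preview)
Your proof is correct and follows essentially the same approach as the paper's: apply Proposition~\ref{CHAR} to $X$ to extract a limit map $f:\Delta\to X$, use continuity of $\alpha$ to get $\alpha\circ f\le c$, and then the maximum principle together with $\alpha(f(\zeta_0))<c$ to conclude $f(\Delta)\subset X_c$. Your write-up is a bit more explicit about the edge cases and about why $\alpha\circ f$ is subharmonic, but the argument is the same.
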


\begin{proof}
For $\,C \in \R\,$ consider the sublevel set $\,O_C  = \{\, x \in X\ :\  \alpha(x) < C \,\}$
and let $\,f_n : \Delta \rightarrow O_C\,$ be a sequence of holomorphic maps 
such that $\,f_n(\zeta_0) \rightarrow x_0 \,$, for some 
$\,\zeta_0 \in \Delta\,$ and $\,x_0 \in O_C$. Since $\,X\,$ is taut, 
Proposition \ref{CHAR} applies to show that up to subsequence 
 $\,f_{n}\,$ converges uniformly on compact subsets to
 a holomorphic map  $\,f:\Delta \rightarrow X$. Note that $\,\alpha\circ f (\zeta_0) < C\,$
and  by continuity  $\,\alpha \circ f \leq C\,$ on $\,\Delta$. Then the
maximum principle for plurisubharmonic functions implies 
that  $\,\alpha \circ f <C\,$
on $\,\Delta$, i.e.
$\,f(\Delta) \subset O_C$. Finally the statement follows from 
Proposition \ref{CHAR}.
  \end{proof}

  
   \bigskip

Next we recall  two results due to D.D. Thai and N. L. Huong.
(\cite{ThHu}, Lemma\,3 and Cor.\,4).
For  analogous statements where tautness is replaced by
 hyperbolicity or complete hyperbolicity,  
see  \cite{Kob}, Thm 3.2.8.

\bigskip
\begin{pro}
\label{PREIM}
Let $\,X\,$ and $\,Y\,$ be complex manifolds and $ \,F: X \to Y\,$ a holomorphic  map.
If   $\,Y\,$ is  taut and  admits an open covering 
$\, \{ U_{j} \} \,$  such that $\,F^{-1}(U_j)\,$ is taut for all $\,j$,  then  $\,X\,$ is taut. 
\end{pro}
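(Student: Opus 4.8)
The plan is to verify condition (ii) of Proposition~\ref{CHAR} for $\,X$. So let $\,f_n\in Hol(\Delta,X)\,$ satisfy $\,f_n(\zeta_0)\to x_0\,$ for some $\,\zeta_0\in\Delta\,$ and $\,x_0\in X$; one must extract a subsequence converging uniformly on compact subsets of $\,\Delta$. Composing with an automorphism of $\,\Delta\,$ we may assume $\,\zeta_0=0$. Put $\,g_n:=F\circ f_n\colon\Delta\to Y$. Then $\,g_n(0)=F(f_n(0))\to F(x_0)\in Y$, and since $\,Y\,$ is taut, Proposition~\ref{CHAR} lets us pass to a subsequence (not relabelled) with $\,g_n\to g\,$ uniformly on compact subsets of $\,\Delta$, where $\,g\colon\Delta\to Y\,$ is holomorphic. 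We fix a distance inducing the topology of $\,Y\,$ ($\,Y\,$ is metrizable, being hyperbolic); in particular every compact subset of an open subset of $\,Y\,$ is at positive distance from the complement.

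The basic local step $(\ast)$ reads as follows: let $\,(\phi_n)\subset Hol(\Delta,X)\,$ with $\,F\circ\phi_n\to\psi\,$ uniformly on compacts, let $\,a\in\Delta\,$ with $\,\phi_n(a)\to p\in X$, and let $\,D\subset\Delta\,$ be a round disc with $\,a\in D$, $\,\overline D\subset\Delta\,$ and $\,\psi(\overline D)\subset U_j\,$ for some $\,j$; then a subsequence of $\,(\phi_n)\,$ converges uniformly on compact subsets of $\,D\,$ to a holomorphic map $\,D\to F^{-1}(U_j)$. Indeed, $\,\psi(\overline D)\,$ is compact in the open set $\,U_j$, so for $\,n\,$ large $\,F(\phi_n(\overline D))\subset U_j$, i.e. $\,\phi_n(\overline D)\subset F^{-1}(U_j)$; moreover $\,F(p)=\psi(a)\in U_j$, whence $\,p\in F^{-1}(U_j)$. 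Composing the $\,\phi_n\,$ with an affine biholomorphism $\,\Delta\to D\,$ and applying Proposition~\ref{CHAR} to the taut manifold $\,F^{-1}(U_j)\,$ produces the required subsequence, which one then transports back to $\,D$.

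The only genuinely delicate point is to globalise $(\ast)$: propagating the convergence naively along a path issued from $\,0\,$ one risks seeing the radii on which $(\ast)$ applies collapse to zero, so instead we use a finite chain of discs. Fix $\,\rho<1\,$ and set $\,D_\rho:=\{|\zeta|<\rho\}$. Each point of $\,\overline{D_\rho}\,$ has a round neighbourhood $\,D'\,$ with $\,\overline{D'}\subset\Delta\,$ and $\,g(\overline{D'})\,$ contained in some $\,U_j$; by compactness and connectedness of $\,\overline{D_\rho}\,$ we may pick finitely many such discs $\,D_1,\dots,D_N\,$ covering $\,\overline{D_\rho}$, with $\,0\in D_1\,$ and $\,D_i\cap(D_1\cup\dots\cup D_{i-1})\neq\emptyset\,$ for $\,i\ge 2$. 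Starting from $\,f_n(0)\to x_0$, apply $(\ast)$ on $\,D_1\,$ to get a subsequence $\,S_1\,$ converging on compact subsets of $\,D_1\,$ to a holomorphic map into $\,X$; choosing a point of $\,D_2\cap D_1\,$ at which $\,S_1\,$ then converges to a point of $\,X$, apply $(\ast)$ to the sequence $\,S_1\,$ on $\,D_2$, and continue, obtaining nested subsequences $\,S_1\supseteq\dots\supseteq S_N\,$ whose limits agree on overlaps; hence $\,S_N\,$ converges uniformly on compact subsets of $\,D_1\cup\dots\cup D_N\supseteq\overline{D_\rho}$, in particular uniformly on $\,\overline{D_\rho}$, to a holomorphic map into $\,X$.

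Finally one iterates this construction along an exhaustion $\,\rho_m\uparrow 1$, at the $\,m$-th stage feeding in the subsequence produced at stage $\,m-1\,$ (which still has its values at $\,0\,$ tending to $\,x_0\,$ and whose image under $\,F\,$ still converges to $\,g$, so the hypotheses persist). The resulting nested subsequences converge uniformly on $\,\overline{D_{\rho_m}}$, and their diagonal subsequence converges uniformly on each $\,\overline{D_{\rho_m}}$, hence on all compact subsets of $\,\Delta$, to a holomorphic map $\,\Delta\to X$. By Proposition~\ref{CHAR}, $\,X\,$ is taut. The main obstacle is thus the third step; the first, second and fourth are routine bookkeeping.
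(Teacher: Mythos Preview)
The paper does not actually prove Proposition~\ref{PREIM}; it merely quotes it as Lemma~3 of Thai--Huong \cite{ThHu}. Your argument is a correct self-contained proof, and it is well adapted to the paper since it rests on the characterisation of tautness given just above in Proposition~\ref{CHAR}. The three ingredients --- pushing the sequence down to $Y$ and invoking tautness there, the local step $(\ast)$ on a small disc mapped into a single $U_j$, and then the finite-chain plus diagonal exhaustion --- are exactly the expected ones, and each is handled cleanly. In particular the point you flag as delicate (that radii might collapse when propagating $(\ast)$ along a path) is genuinely the only place one could go wrong, and your use of a \emph{fixed} finite cover of $\overline{D_\rho}$ chosen in advance from the limit map $g$ avoids it. So: nothing to fix; the proof is sound and in fact supplies what the paper only cites.
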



\bigskip
\begin{pro}
\label{COVERING}
Let $\,X\,$ and $\,Y\,$ be complex manifolds and $ \,F: X \to Y\,$ be a holomorphic covering.
Then $\,Y\,$ is  taut if and only if so is $\,X\,$ 
\end{pro}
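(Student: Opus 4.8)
The plan is to establish the two implications separately. For ``$Y$ taut $\Rightarrow$ $X$ taut'' I would invoke the permanence result of Proposition~\ref{PREIM}, and for the converse I would verify condition~(ii) of the characterization in Proposition~\ref{CHAR} by lifting holomorphic disks from $Y$ to $X$.

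\emph{First implication.} Assume $Y$ is taut. Every point of $Y$ has an open neighborhood that is evenly covered by $F$, and since any open subset of an evenly covered set is again evenly covered, by shrinking such a neighborhood inside a holomorphic chart one may assume it biholomorphic to a ball in $\C^n$, hence taut. Let $\{U_j\}$ be the resulting open covering of $Y$. For each $j$ the set $F^{-1}(U_j)$ is a disjoint union of open subsets of $X$, each biholomorphic to $U_j$ and therefore taut; and a disjoint union of taut manifolds is taut, since a holomorphic map of the connected disk into such a union has image in a single summand, while a sequence of such maps whose summands are eventually pairwise distinct is compactly divergent. Thus $F^{-1}(U_j)$ is taut for every $j$, and Proposition~\ref{PREIM} gives that $X$ is taut.

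\emph{Second implication.} Assume $X$ is taut and let $g_n:\Delta\to Y$ be holomorphic with $g_n(\zeta_0)\to y_0\in Y$. Fix $x_0\in F^{-1}(y_0)$, an evenly covered neighborhood $U$ of $y_0$, and the sheet $V\subset F^{-1}(U)$ containing $x_0$, so that $F|_V:V\to U$ is biholomorphic. For $n$ large $g_n(\zeta_0)\in U$; put $x_n:=(F|_V)^{-1}(g_n(\zeta_0))$, so $x_n\to x_0$, and, $\Delta$ being simply connected, lift $g_n$ to the holomorphic map $f_n:\Delta\to X$ with $f_n(\zeta_0)=x_n$. Since $f_n(\zeta_0)\to x_0\in X$ and $X$ is taut, Proposition~\ref{CHAR} yields a subsequence $f_{n_k}$ converging uniformly on compact subsets of $\Delta$ to a holomorphic $f:\Delta\to X$; composing with the continuous map $F$ gives $g_{n_k}=F\circ f_{n_k}\to F\circ f$ uniformly on compact subsets, and a second application of Proposition~\ref{CHAR} shows that $Y$ is taut.

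I do not expect a serious obstacle: the content of the argument is the correct combination of the two cited results, the only slightly delicate points being that the shrunk neighborhoods remain evenly covered (immediate), that a disjoint union of taut manifolds is taut, that the prescribed lifts $f_n$ are holomorphic (because $F$ is a local biholomorphism), and that $x_n\to x_0$ (continuity of $(F|_V)^{-1}$). In particular no monodromy argument along paths is needed, since $\Delta$ is simply connected and convergence is transported through $F$ in one direction and through the fixed local inverse $(F|_V)^{-1}$ near $\zeta_0$ in the other.
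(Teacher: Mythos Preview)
Your argument is correct in both directions. Note, however, that the paper does not supply its own proof of this proposition: together with Proposition~\ref{PREIM} it is simply quoted from Thai--Huong \cite{ThHu} and used as a black box. So there is no proof in the paper to compare against; your write-up is an addition rather than a reconstruction. That said, it fits the paper's internal logic well, since it uses exactly the two tools the paper has set up---Proposition~\ref{PREIM} for the implication $Y$ taut $\Rightarrow$ $X$ taut, and the characterization in Proposition~\ref{CHAR} for the converse---making the argument self-contained within the paper's framework.
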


\nbigskip
For later use we also collect the following well-known facts.

\bigskip
\begin{lem}
\label{POTENTIAL}
 Let $\,\theta\,$ be a real,  positive
and closed  $\,(1,1)$-current on a complex manifold $\,X$. 

 \medskip
\item{\rm (i)} If $\,H^1(X,{\mathcal O})=H^2(X,\R)= 0$, then there exists a plurisubharmonic function $\,\tau\,$  on $\,X\,$  such that
$\,\theta = i \partial \bar{\partial} \tau$.

\medskip
\item{\rm (ii)}   If $\,X\,$ is compact K\"ahler and $\,\theta\,$ is exact then $\,\theta = 0$.

\medskip
\item{\rm (iii)}  If $\,H^1(X,\R)= 0\,$ and $\,\tau\,$ is a pluriharmonic 
function on $\,X$, then there exists a holomorphic function $\,f : X \rightarrow\C\,$ such that  $ \,{\rm Im}\, f = \tau$.

\end{lem}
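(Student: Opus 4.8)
The three assertions are classical and, notably, none of them uses tautness; the plan is to derive each from the standard potential-theoretic toolkit. For (i) the idea is to produce a global potential by a bidegree decomposition together with a $\bar\partial$-resolution at the level of currents, and then to upgrade that potential to a genuine plurisubharmonic function. Since closed currents compute de Rham cohomology and $H^2(X,\R)=0$, I would first write $\theta=d\sigma$ with $\sigma$ a real $1$-current. Decomposing $\sigma=\sigma^{1,0}+\overline{\sigma^{1,0}}$ and comparing bidegrees in the identity $d\sigma=\theta$ --- which is of pure type $(1,1)$ --- forces $\partial\sigma^{1,0}=0$, $\bar\partial\overline{\sigma^{1,0}}=0$ and $\theta=\bar\partial\sigma^{1,0}+\partial\overline{\sigma^{1,0}}$. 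The $(0,1)$-current $\overline{\sigma^{1,0}}$ is $\bar\partial$-closed, hence, by the Dolbeault isomorphism for currents together with $H^1(X,{\mathcal O})=0$, equals $\bar\partial v$ for some distribution $v$; conjugating gives $\sigma^{1,0}=\partial\bar v$, and substituting yields $\theta=\partial\bar\partial(v-\bar v)=i\partial\bar\partial\tau$ with $\tau:=2\,{\rm Im}\,v$ a real distribution. The last step is to promote $\tau$: since $\theta\ge 0$ by hypothesis, $\tau$ is a distribution with $i\partial\bar\partial\tau\ge 0$, and the standard regularity theorem for such distributions shows that $\tau$ agrees almost everywhere with a (unique) plurisubharmonic function, which is the one sought. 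I expect this last regularity input to be the only genuinely non-formal ingredient, hence the main obstacle; everything preceding it is bidegree bookkeeping plus the Dolbeault isomorphism.

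For (ii) I would fix a K\"ahler form $\omega$ on $X$, set $n=\dim_\C X$, and consider $\int_X\theta\wedge\omega^{n-1}$. Since $\theta$ is a positive $(1,1)$-current and $\omega^{n-1}$ a smooth strictly positive $(n-1,n-1)$-form, $\theta\wedge\omega^{n-1}$ is, up to a positive constant, the trace measure of $\theta$, and in particular it vanishes if and only if $\theta=0$. On the other hand, writing $\theta=d\eta$ with $\eta$ a $1$-current and using that $\omega$ is closed while $X$ is compact without boundary, Stokes' theorem gives $\int_X\theta\wedge\omega^{n-1}=\int_X d(\eta\wedge\omega^{n-1})=0$. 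Combining the two yields $\theta=0$.

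For (iii), pluriharmonicity of $\tau$ means $\partial\bar\partial\tau=0$, equivalently that the real $1$-form $d^c\tau$ is closed (taking, say, $d^c=i(\bar\partial-\partial)$, so that $dd^c=2i\partial\bar\partial$). Since $H^1(X,\R)=0$, there is a real function $g$ with $d^c\tau=dg$; comparing $(0,1)$-components gives $\bar\partial g=i\bar\partial\tau$, whence $\bar\partial(-g+i\tau)=0$, so that $f:=-g+i\tau$ is holomorphic with ${\rm Im}\,f=\tau$ on each connected component (apply this componentwise if $X$ is disconnected). The only points demanding care in (i) and (iii) are the sign and normalisation conventions for $i\partial\bar\partial$ and $d^c$; everything else is routine.
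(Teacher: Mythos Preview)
Your arguments for (i) and (iii) are correct and are essentially the standard proofs that the paper defers to Demailly's book (Prop.~III~1.19 and Thm.~I~5.16 respectively); the bidegree/Dolbeault argument in (i) and the $d^c$-primitive argument in (iii) are exactly what those references unwind to.

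Your proof of (ii), however, is genuinely different from the paper's and more elementary. The paper argues as follows: take local psh potentials $\tau_j$ for $\theta$ on an open cover (from (i) applied locally), glue them via a partition of unity to a global current $T$, observe that $\Theta:=\theta-i\partial\bar\partial T$ is a \emph{smooth} exact real $(1,1)$-form, invoke the classical $\partial\bar\partial$-Lemma on compact K\"ahler manifolds to write $\Theta=i\partial\bar\partial Q$ with $Q$ smooth, conclude that $Q+T$ is a global psh function on the compact manifold $X$, hence constant, hence $\theta=0$. Your route---pair $\theta$ with $\omega^{n-1}$ and use Stokes for currents---bypasses the $\partial\bar\partial$-Lemma entirely, using only that $\omega$ is closed and that the trace measure of a positive $(1,1)$-current controls it. This is shorter and requires no Hodge theory; the paper's argument, on the other hand, exhibits the global potential explicitly, which is closer in spirit to how (ii) is actually used later (though in the end only the vanishing $\theta=0$ is needed).
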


\begin{proof}
(i) follows from the proof of Prop. III 1.19 in  \cite{Dem}.
For (ii) note that (i) implies that  there exist a locally finite  open covering
$\,\{ U_j \}\,$ of $\,X\,$ and plurisubharmonic functions $\,\tau_j\,$  on $\,U_j\,$
such that $\,\theta|_{U_j} =  i \partial \bar{\partial} \tau_j.$
Let $\,\psi_j\,$ be a partition of unity associated to $\,\{ U_j \}$, define  
$ \,T :=\sum_{j} \psi_j \tau_j\,$ and $ \,\Theta := \theta - i \partial \bar{\partial} T.$
Then for $\,j_0\,$ fixed one has 
$ \Theta |_{U_{j_0}} = (\theta - i \partial \bar{\partial} T)|_{U_{j_0}}
 =i \partial \bar{\partial} \sum_{j } \psi_j (\tau_{j_0} - \tau_j).$ Since $\,\tau_{j_0} - \tau_j\,$ is pluriharmonic on $\,U_{j_0} \cap U_j,$ it follows that $\,\Theta\,$
is a smooth, exact, real $\,(1,1)$-form on $\,X.$ Then, the classical  
$\,\partial \bar \partial$-Lemma for compact K\"ahler manifolds
(see e.g. \cite{GrHa},$\,$Lemma$\,$1.2,$\,$p.$\,$148) implies that
there exists a smooth function $\,Q\,$ on $\,X\,$ such that 
$\,\Theta = i \partial \bar \partial Q$. 
Hence  $\,\theta =   i\partial \bar \partial (Q + T)\,$
and Thm.$\,$I 3.31
 in \cite{Dem}  implies that $\, Q + T\,$
is plurisubharmonic on $\,X$. Since $\,X\,$ is compact,
$\,Q + T\,$ is constant and  consequently  $\,\theta\,$ is zero.     
For (iii) see \cite{Dem} Theorem I 5.16.
\end{proof}

\bigskip

Let us briefly recall the notion of globalization in the context 
of  $\,\R$-manifolds. For further details
and generalizations we refer to \cite{Pal},
\cite{HeIa}, \cite{CIT} and \cite{MiOe}. 
An $\,\R$-action by biholomorphisms on a complex manifold $\,X\,$ induces
a local holomorphic $\,\C$-action by integration of the associated
holomorphic vector field.
This is given by 
an open 
neighborhood $\,\Sigma\,$  
of the neutral  section $\,\{e\}\times X\,$ in $\,\C
\times X\,$ and 
a holomorphic  map 
$\, \Phi:\Sigma \to X $, $\,(\lambda, x) \to \lambda \cdot x$,
 such that

\nmedskip
(i) the set $\,\{\lambda \in \C \ :\ (\lambda,x)\in \Sigma\} \,$ is 
connected for all $\,x\in X$,

\nmedskip
(ii) for all $\,x\in X\,$ one has $\,0 \cdot x=x$,

\nmedskip
 (iii) if $\ (\mu +\lambda,x)\in \Sigma$, $\ (\lambda,x)\in \Sigma\ $ and 
$\ (\mu, \lambda \cdot x)\in 
\Sigma$, $\ $then $\ (\mu +\lambda)\cdot x=\mu \cdot(\lambda \cdot x)$.

\medskip

A possibly
non-Hausdorff complex manifold with a global $\,\C$-action containing
$\,X\,$ as an  $\,\R$-invariant domain is called a {\it
globalization\/} of the local $\,\C$-action. By \cite{HeIa}, if $\,X\,$ is holomorphically
separable  there exists a (unique) {\it universal} globalization $\,X^*$.
That is, a globalization 
with the following  universal property: for any $\,\R$-equivariant holomorphic map
$\,f : X \to Y \,$ into a $\,\C$-manifold
 there exists a $\,\C$-equivariant holomorphic extension $\,f^* : X^* \to  Y$.

 For $\,x\,$ in $\,X^*\,$
let  $\,\Sigma_x=\{ \,\lambda \in \C \ : \lambda \cdot x  \in X \, \}$. Then
$\,\Sigma_x\,$ is $\,\R$-invariant, connected  and there exist
upper semicontinuous  functions 
$\,\alpha, \beta:X^*  \to \R \cup \{-\infty\}\,$ defined by
$$\,\Sigma_x=\{ \,\lambda \in \C \ : \alpha(x) < {\rm Im}\, \lambda <-\beta(x) \, \}\,.$$
Note that  $\,\alpha \,$ and $\,\beta\,$ are $\,\R$-invariant and  $\,\alpha + \beta < 0$.
Moreover, an element  $\,x\,$ of  $\,X^*\,$ belongs to $\,X\,$ if and only if
$\,\alpha(x)  < 0 < - \beta(x)$. Thus 
$$\,X = \{ x \in X^* : \alpha(x)  < 0 \ {\rm and } \ \beta(x)  < 0 \}.$$
We recall  the basic  properties
of $\,\alpha\,$ and $\,\beta\,$ in the case when $\,X\,$ is a 
taut, Stein manifold.


\bigskip
\begin{lem}
\label{ALPHABETA}
Let  $\,X\,$ be a taut, Stein $\,\R$-manifold. Then

\medskip
\item{\rm (i)} the functions $\,\alpha\,$ and $\,\beta\,$ are continuous and  plurisubharmonic,

\medskip
\item{\rm (ii)} for  $\, \lambda  \in \C \,$ and $\,  \ x \in X^* \,$ one has   
$$\alpha(\lambda \cdot x) = -{\rm Im}\,(\lambda) + \alpha(x) \quad \quad   \quad  \beta(\lambda \cdot x) =  {\rm Im} \,(\lambda) + \beta(x)\,.$$

\medskip
\item{\rm (iii)} the sum  $\,\alpha\,+\,\beta\,$ is a negative, $\,\C$-invariant, plurisubharmonic, continuous 
function,
 
\medskip
\item{\rm (iv)} if the $\,\R$-action is proper, then $\, \max(\alpha(x),\beta(x)) > - \infty \,$
for all $\,x\,$ in $\,X^*$.

\end{lem}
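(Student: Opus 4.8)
The plan is to establish (ii) by a direct computation with the \emph{global} $\C$-action on $X^*$, to deduce (iii) from (i)--(ii), and to prove (iv) separately, the real substance lying in (i). For $x\in X^*$ and $\lambda\in\C$ the cocycle identity gives $\mu\cdot(\lambda\cdot x)=(\mu+\lambda)\cdot x$ for all $\mu\in\C$, so $\Sigma_{\lambda\cdot x}=\Sigma_x-\lambda$; comparing this with the descriptions $\Sigma_{\,\cdot\,}=\{\,\alpha(\cdot)<{\rm Im}<-\beta(\cdot)\,\}$ yields the two formulas in (ii). Taking $\lambda\in\R$ recovers the $\R$-invariance and shows $(\alpha+\beta)(\lambda\cdot x)=(\alpha+\beta)(x)$, i.e. $\alpha+\beta$ is $\C$-invariant; since $\alpha+\beta<0$ and the upper semicontinuity of $\alpha,\beta$ (hence of $\alpha+\beta$) are already recorded — the latter being immediate from the openness of $X$ in $X^*$ and the continuity of the action — part (iii) will follow once (i) is proved. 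For (iv), if the $\R$-action is proper then every isotropy group is a compact subgroup of $\R$, hence trivial, so the action is free; were $\alpha(x)=\beta(x)=-\infty$ we would have $\Sigma_x=\C$, so $\lambda\mapsto\lambda\cdot x$ would be a holomorphic map of $\C$ into the taut, hence hyperbolic, manifold $X$ and thus constant, making $x$ a fixed point — a contradiction. Hence $\max(\alpha(x),\beta(x))>-\infty$.

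For (i) it suffices, by the symmetry exchanging the generating vector field with its opposite (which interchanges $\alpha$ and $\beta$), to treat $\alpha$; and, $\alpha$ being upper semicontinuous, continuity amounts to lower semicontinuity at points $x_0$ with $\alpha(x_0)>-\infty$. Suppose this failed: $x_n\to x_0$ with $\alpha(x_n)\le c<\alpha(x_0)$ for some real $c$. Choosing $\delta>0$ with $\delta<\alpha(x_0)-c$ and $\alpha(x_0)+\delta<-\beta(x_0)$, upper semicontinuity of $\alpha$ and $\beta$ gives, for all large $n$, that the strip $P=\{\lambda:\alpha(x_0)-\delta<{\rm Im}\,\lambda<\alpha(x_0)+\delta\}$ lies in $\Sigma_{x_n}$, while $P\not\subset\Sigma_{x_0}$ since $P$ protrudes below the level ${\rm Im}=\alpha(x_0)$. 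Fixing a biholomorphism $\kappa:\Delta\to P$ with $\kappa(0)=is_0$ and $s_0\in(\alpha(x_0),\alpha(x_0)+\delta)$, so that $(is_0)\cdot x_0\in X$, the maps $f_n:=\kappa(\cdot)\cdot x_n:\Delta\to X$ are defined for $n$ large and satisfy $f_n(0)=(is_0)\cdot x_n\to(is_0)\cdot x_0\in X$, so by Proposition \ref{CHAR} a subsequence converges uniformly on compacta to a holomorphic $f:\Delta\to X$. But $f_n(\zeta)=\kappa(\zeta)\cdot x_n\to\kappa(\zeta)\cdot x_0$ in $X^*$, hence $f=\kappa(\cdot)\cdot x_0$, whose image meets $X^*\setminus X$ (as $P\not\subset\Sigma_{x_0}$) — contradicting $f(\Delta)\subset X$. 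So $\alpha$ is continuous.

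To prove $\alpha$ plurisubharmonic, the crucial remark is that
\[
W:=\{(z,x)\in\C\times X^*:z\cdot x\in X\}=\{(z,x):\alpha(x)<{\rm Im}\,z<-\beta(x)\}
\]
is biholomorphic to $\C\times X$ via $(z,x)\mapsto(z,z\cdot x)$, with holomorphic inverse $(z,y)\mapsto(z,(-z)\cdot y)$; hence $W$ is Stein and admits a continuous plurisubharmonic exhaustion. Now let $\psi:\Delta\to X^*$ be holomorphic and assume $\alpha\circ\psi$ fails the sub-mean-value inequality at $0$; writing $x_0=\psi(0)$ we may, since $\alpha$ is now known to be continuous, shrink $\Delta$ and majorize $\alpha\circ\psi$ on $\partial\Delta$ by $h={\rm Im}\,g$ for a holomorphic $g$, with $h(0)<\alpha(x_0)$ and with $h$, $\alpha\circ\psi$, $\beta\circ\psi$ all within a small $\varepsilon$ of the constants $\alpha(x_0)$, $\alpha(x_0)$, $\beta(x_0)$ on $\bar\Delta$. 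Using $\alpha+\beta<0$ one checks that for a suitable $\sigma_1>0$ the continuous family of analytic disks $f_\sigma(\zeta)=(g(\zeta)+i\sigma,\psi(\zeta))$, $\sigma\in[0,\sigma_1]$, stays between the ``floor'' ${\rm Im}\,z=\alpha(x)$ and the ``ceiling'' ${\rm Im}\,z=-\beta(x)$, in the sense that every $f_\sigma(\partial\Delta)$ lies in $W$ and $f_{\sigma_1}(\bar\Delta)\subset W$. The continuity principle for the Stein manifold $W$ — the set $\{\sigma:f_\sigma(\bar\Delta)\subset W\}$ is clearly open, and it is closed since on such $\sigma$ a plurisubharmonic exhaustion of $W$ together with the maximum principle confines $f_\sigma(\bar\Delta)$ to one fixed compact subset of $W$ — then yields $f_0(\bar\Delta)\subset W$, i.e. $h(0)>\alpha(x_0)$, a contradiction. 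Hence $\alpha$, and by the symmetry $\beta$, is plurisubharmonic, and (iii) follows.

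I expect the plurisubharmonicity in (i) to be the main obstacle: the delicate point is to arrange the family of analytic disks so that it genuinely remains inside $W$, i.e. does not run into the ceiling $\{{\rm Im}\,z=-\beta(x)\}$, and this is precisely where the continuity of $\alpha,\beta$ (which is why it is established first) and the strict inequality $\alpha+\beta<0$ enter; the set where $\alpha=-\infty$ causes no difficulty, since there the sub-mean-value inequality is vacuous, while off it $\alpha$ is finite and continuous, so the majorization above is available.
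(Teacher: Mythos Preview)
Your arguments for (ii), (iii) and (iv) agree with the paper's: (ii) is read off from $\Sigma_{\lambda\cdot x}=\Sigma_x-\lambda$, (iii) is immediate from (i)--(ii), and (iv) is the same ``no entire curves in a taut manifold'' observation (the paper phrases it as ``the orbit would be $\C$ or $\C^*$'', you phrase it as ``the map $\C\to X$ is constant, forcing a fixed point''; these are equivalent, and your version handles $x\in X^*$ directly without the extra reduction via $X^*=\C\cdot X$).

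For (i) the paper does not argue at all: it simply cites \cite{For} for plurisubharmonicity and \cite{Ian}, \cite{MiOe} for continuity. You instead supply self-contained proofs. Your continuity argument via Proposition~\ref{CHAR} is exactly the mechanism behind the cited continuity results, and is correct as written. Your plurisubharmonicity argument --- passing to the Stein manifold $W\cong\C\times X$ and running a Kontinuit\"atssatz with a one-parameter family of analytic discs --- is the right idea and is essentially Forstneri\v c's approach. One point deserves care: when you ``shrink $\Delta$'' you must ensure the sub-mean-value inequality still fails on the shrunken disc. This is fine because for a \emph{continuous} function failure of subharmonicity at a point gives arbitrarily small radii of failure, so you may first choose $r$ small enough that $\alpha\circ\psi$, $\beta\circ\psi$ (and hence the harmonic majorant $h$) are within $\varepsilon$ of their central values, and then the inequalities $2\varepsilon<\sigma_1<-(\alpha+\beta)(x_0)-2\varepsilon$ leave room for the required $\sigma_1$. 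With that clarification your argument goes through; the case $\beta(x_0)=-\infty$ only makes things easier, since continuity forces $-\beta\circ\psi$ to be large on the shrunken disc and the ceiling is then irrelevant.
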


\begin{proof} 
(i) Plurisubharmonicity of   $\,\alpha\,$ and $\,\beta\,$ in the 
case where $\,X\,$ is a Stein $\,\R$-manifold 
is proved in \cite{For}. Since $\,X\,$ is also taut,
such functions are continuous
(\cite{Ian}, \cite{MiOe},\,Prop.\,3.2). (ii)  is a direct consequence of  the definition and
(iii) follows from (i) and (ii). 
For (iv) note that properness of the $\,\R$-action implies that  there are no fixed points. 
Therefore if $\,\alpha(x)=\beta(x)=-\infty\,$ for some $\,x\,$ in 
$\,X$, the (local) $\,\C$-orbit through $\,x\,$ is biholomorphic either to 
$\,\C\,$ or to $\,\C^*$.
 Since $\,X\,$ is taut, this
gives a contradiction. Recalling that $\,X^*=\C\cdot X$, the result follows
from (ii).
\end{proof}

\bigskip
Finally we recall the following result of C. Miebach and K. Oeljeklaus
(see \cite{MiOe}, Thm. 4.4) which is often used in the sequel.


\bigskip
\begin{theorem}
\label{BUNDLE}
Let $\,X\,$ be a  2-dimensional, taut, Stein manifold with  a proper
$\,\R$-action. Then the $\,\C$-action on $\,X^*\,$ is proper, i.e. 
$\,X^*\,$ can be regarded as a holomorphic principal $\,\C$-bundle
over the Riemann surface $\,S:=X^*/\C$. In particular
if $\,S\,$ is non compact,  then $\,X^*\,$ is $\,\C$-equivariantly
biholomorphic to $\,\C \times S$.
\end{theorem}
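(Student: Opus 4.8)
The plan is to show, in order, that the $\,\C$-action on $\,X^*\,$ is \emph{free} and then \emph{proper}; the principal bundle structure and its triviality over a non-compact base then follow by standard arguments. \emph{Freeness.} Fix $\,x\in X^*\,$ and let $\,\C_x=\{\lambda\in\C:\lambda\cdot x=x\}\,$ be its isotropy group. Since $\,X^*=\C\cdot X\,$ the set $\,\Sigma_x\,$ is non-empty, and for $\,\mu\in\C_x\,$ one has $\,(\lambda+\mu)\cdot x=\lambda\cdot x$, so $\,\C_x+\Sigma_x=\Sigma_x$. By Lemma \ref{ALPHABETA}(iv), properness of the $\,\R$-action forces $\,\Sigma_x\ne\C$, hence $\,\Sigma_x\,$ is a proper horizontal strip or half-plane; any $\,\mu\in\C\,$ preserving such a set has $\,{\rm Im}\,\mu=0$, so $\,\C_x\subseteq\R$. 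Thus $\,\C_x\,$ is the $\,\R$-isotropy of $\,x\,$ in $\,X^*$, which, $\,\C\,$ being abelian, coincides with the $\,\R$-isotropy in $\,X\,$ of a point of the orbit $\,\C\cdot x\,$ that lies in $\,X\,$ (such a point exists because $\,X^*=\C\cdot X$), and this is trivial by properness of the $\,\R$-action on $\,X$. Hence the action is free and every orbit is biholomorphic to $\,\C$.

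\emph{Properness.} We check that $\,\C\times X^*\to X^*\times X^*$, $\,(\lambda,x)\mapsto(x,\lambda\cdot x)$, is proper. Let $\,x_n\to x\,$ and $\,\lambda_n\cdot x_n\to y\,$ in $\,X^*$. Using $\,X^*=\C\cdot X\,$ and the openness of $\,X\,$ in $\,X^*$, after replacing $\,x_n,\lambda_n\,$ by $\,\mu\cdot x_n\,$ and $\,\nu+\lambda_n\,$ for suitable fixed $\,\mu,\nu\in\C\,$ we may assume $\,x,y\in X\,$ and $\,x_n,\lambda_n\cdot x_n\in X\,$ for all large $\,n$, so that $\,\lambda_n\in\Sigma_{x_n}$. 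By Lemma \ref{ALPHABETA}(iv) at least one of $\,\alpha,\beta\,$ is finite, hence continuous, near $\,x$, and likewise near $\,y$. Feeding this into the equivariance relations $\,\alpha(\lambda\cdot z)=\alpha(z)-{\rm Im}\,\lambda$, $\,\beta(\lambda\cdot z)=\beta(z)+{\rm Im}\,\lambda\,$ of Lemma \ref{ALPHABETA}(ii), one finds that $\,{\rm Im}\,\lambda_n\,$ is bounded, except in the two degenerate configurations in which one of $\,\alpha(x),\beta(x)\,$ and one of $\,\alpha(y),\beta(y)\,$ equals $\,-\infty$, i.e.\ in which $\,x\,$ or $\,y\,$ lies on the polar locus $\,\{\alpha=-\infty\}\cup\{\beta=-\infty\}$. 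Assuming $\,{\rm Im}\,\lambda_n\to q$, the fact that $\,\alpha,\beta<0\,$ on $\,X\,$ shows that $\,q\,$ is an interior value of $\,\Sigma_x$, so that $\,c_n:=(i\,{\rm Im}\,\lambda_n)\cdot x_n\in X\,$ converges to $\,(iq)\cdot x\in X$; since $\,\lambda_n\cdot x_n=({\rm Re}\,\lambda_n)\cdot c_n\,$ with $\,{\rm Re}\,\lambda_n\in\R$, properness of the $\,\R$-action on $\,X\,$ forces $\,{\rm Re}\,\lambda_n$, hence $\,\lambda_n$, to subconverge. The degenerate configurations are the technical heart of the statement, and I would settle them by the argument of \cite{MiOe}, which is precisely where tautness of $\,X\,$ and the hypothesis $\,\dim_\C X=2\,$ enter.

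\emph{Bundle structure and triviality.} Once the holomorphic $\,\C$-action on $\,X^*\,$ (which is Hausdorff by \cite{Ian}) is free and proper, the orbit space $\,S:=X^*/\C\,$ is a complex manifold and $\,X^*\to S\,$ is a holomorphic principal $\,\C$-bundle; since $\,X^*\,$ is connected and the orbits are one-dimensional, $\,S\,$ is a connected Riemann surface. If $\,S\,$ is non-compact, then, being an open Riemann surface, it is Stein, so $\,H^1(S,\mathcal O)=0$; as holomorphic principal $\,(\C,+)$-bundles over $\,S\,$ are classified by $\,H^1(S,\mathcal O)\,$ — the structure group being the additive sheaf $\,\mathcal O\,$ — the bundle $\,X^*\to S\,$ is holomorphically trivial, and any trivialization automatically carries the principal action to translation on the first factor of $\,\C\times S$. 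Hence $\,X^*\,$ is $\,\C$-equivariantly biholomorphic to $\,\C\times S$.

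The main obstacle is the properness of the $\,\C$-action, and within it the polar configurations above: once $\,{\rm Im}\,\lambda_n\,$ is controlled, the reduction to the proper $\,\R$-action on $\,X\,$ is routine, but handling the sequences that run into $\,\{\alpha=-\infty\}\cup\{\beta=-\infty\}\,$ — while simultaneously ensuring that the orbit space is a genuine one-dimensional complex manifold and not merely a topological quotient — is where tautness of $\,X\,$ and the restriction to complex dimension two cannot be dispensed with.
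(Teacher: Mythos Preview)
The paper does not give its own proof of this theorem: it is stated there as a recalled result of Miebach--Oeljeklaus (\cite{MiOe}, Thm.~4.4), with only the final sentence justified by the remark that $H^1(S,\mathcal O)=0$ for a non-compact Riemann surface. Your triviality argument is exactly that remark, so on that part you match the paper.

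Your freeness argument is a correct elaboration (and is not in the present paper). For properness you give a reasonable reduction in the non-degenerate case but explicitly defer the polar configurations to \cite{MiOe}; since that is precisely the substance of the cited theorem, your write-up is not an independent proof but a partial reconstruction that still leans on \cite{MiOe} at the decisive step --- which is, in effect, what the paper itself does by citing the result outright. So there is no genuine gap, but be aware that you have not proved more than the paper: both you and the paper rely on \cite{MiOe} for the core properness assertion.
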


\bigskip
Note that last part of the statement follows directly from the fact that on a non
compact Riemann surface $\,S\,$ the cohomology group $\,H^1(S, \mathcal O)\,$
vanishes.


\bbigskip


\section{distinguished $\,\R$-invariant domains in $\,\C^2$}
 
 \bigskip

Consider the domains
of $\,\C^2\,$ of the form 
$\,\Omega_{a} = \{\, (z,w) \in \C^2\ :\ a(w) < {\rm Im}\, z\, \}$,
with $\,a: \C \to \{-\infty\} \cup \R\,$ an upper semicontinuous function.
Note that $\,\R\,$ acts properly on $\,\Omega_{a}\,$ by translations 
on the first factor.
The main result of this section is Theorem \ref{TAUTT}, where we determine
necessary and sufficient conditions for $\,\Omega_{a}\,$ to be taut.
  
We already noted in the introduction that if $\,\Omega_a\,$
is taut, then $\,a\,$ is  real valued and subharmonic. 
Moreover  $\,\C^2\,$ is the universal globalization
of $\,\Omega_{a}$, therefore by (i) of Lemma \ref{ALPHABETA}
the function $\,\alpha:\C^2 \to  \{-\infty\} \cup \R$, given by 
$\,(z,w) \to a(w) - {\rm Im}\, z$,   is continuous.
As a consequence $\,a\,$ necessarily belongs to 
 $$\mathcal C:= \{{\rm\, subharmonic,\ real\ valued, \ continuous\ functions\ on \ }
\C\,\}.$$

Also note that if $\,\Omega_{a}\,$ is taut, then for all positive $\,\tau \in \R\,$ the
domain $\,\Omega_{\tau a}\,$ is also taut, since the biholomorphism 
$\,\C^2 \to \C^2$, defined by $\,(z,w) \to  (\tau z, w )$, maps  $\,\Omega_{a}\,$
onto $\,\Omega_{\tau a}$.
Thus the set of interest  $\,\mathcal F := \{ \,a \in \mathcal C \ : \  \Omega_{a} \ {\rm is \ taut}\,\}\,$ is a cone. 
Here we show that $\,\mathcal F\,$ coincides with 
$\, \{ \,a \in \mathcal C \ : \ a\  {\rm is \ not \ harmonic}\,\}$.
We need some preliminary lemmata.


\bigskip
\begin{lem}
\label{SUBSEQ}
Let $ \,a \in \mathcal C\,$ and  $\,(f_n,g_n) : \Delta \rightarrow \Omega_{a}$ be a sequence of holomorphic maps such that 

\medskip
\item{\rm (i)} 
$\,f_n(\zeta_0) \to z_0$,
for some $\,\zeta_0 \in \Delta$ and $\,z_0 \in \C$,

\medskip
\item{\rm (ii)}
  $\,g_n\,$ converges uniformly on compact subsets of $\,\Delta\,$
to a holomorphic map $\,g:\Delta \to \C\,$ such that $\,(z_0,g(\zeta_0) ) \in \Omega_a$.

\nmedskip
Then there exists a subsequence of $\,f_n\,$ 
converging uniformly on compact subsets of $\,\Delta\,$   
to a holomorphic map $\,f:\Delta \to \C\,$ such that $\,(f,g)(\Delta) \subset
\Omega_a$.

\end{lem}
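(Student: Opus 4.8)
The plan is to fix a point $\,\zeta_1 \in \Delta\,$ and control the imaginary parts of the $\,f_n\,$ from below and above using the defining inequality of $\,\Omega_a\,$ together with a normal-families argument for $\,f_n$. First I would recall that, writing $\,u_n := {\rm Im}\, f_n$, the condition $\,(f_n,g_n)(\Delta) \subset \Omega_a\,$ reads $\,u_n(\zeta) > a(g_n(\zeta))\,$ for all $\,\zeta \in \Delta$. Since $\,g_n \to g\,$ uniformly on compacta and $\,a\,$ is continuous (hence the composition $\,a \circ g_n\,$ converges uniformly on compacta to $\,a \circ g$), the functions $\,a \circ g_n\,$ are locally uniformly bounded; thus the harmonic functions $\,u_n\,$ are locally uniformly bounded below. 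This is the ``easy'' half.

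The main obstacle is getting a \emph{locally uniform upper bound} for $\,u_n$, because a priori $\,u_n\,$ could run off to $\,+\infty$. The idea is to exploit hypothesis (i), namely $\,f_n(\zeta_0) \to z_0$, which in particular gives $\,u_n(\zeta_0) \to {\rm Im}\, z_0 < +\infty$, so $\,\{u_n(\zeta_0)\}\,$ is bounded. Now I would invoke Harnack-type control for the positive harmonic functions $\,u_n - a\circ g_n + C_K \geq 0\,$ on a disk $\,\Delta_K \Subset \Delta$: a nonnegative harmonic function bounded at one interior point is locally uniformly bounded (by the Harnack inequality on slightly smaller disks). Combining the lower bound on $\,a\circ g_n\,$ with the boundedness of $\,u_n(\zeta_0)\,$ yields that $\,u_n\,$ is locally uniformly bounded above on every compact subset of $\,\Delta$. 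Hence $\,\{u_n\}\,$ is a locally uniformly bounded sequence of harmonic functions, so by Montel/normal families for harmonic functions it has a subsequence converging uniformly on compacta; passing to a further subsequence and using that the real parts ${\rm Re}\, f_n\,$ are then pinned down up to an additive constant (fixed by $\,{\rm Re}\, f_n(\zeta_0) \to {\rm Re}\, z_0$) via the conjugate-function relation, the full sequence $\,f_n\,$ subconverges uniformly on compacta to a holomorphic $\,f : \Delta \to \C\,$ with $\,f(\zeta_0) = z_0$.

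Finally I would check that the limit lands in $\,\Omega_a$. Passing to the limit in $\,u_n(\zeta) > a(g_n(\zeta))\,$ (using $\,u_n \to {\rm Im}\, f\,$ and $\,a\circ g_n \to a\circ g\,$ uniformly on compacta) gives only the weak inequality $\,{\rm Im}\, f(\zeta) \geq a(g(\zeta))\,$ on $\,\Delta$. But at $\,\zeta_0\,$ hypothesis (ii) gives $\,{\rm Im}\, z_0 > a(g(\zeta_0))$, i.e. the strict inequality holds at an interior point. Since $\,{\rm Im}\, f - a\circ g\,$ is superharmonic (difference of a harmonic function and a subharmonic one) and nonnegative on $\,\Delta$, the minimum principle for superharmonic functions forces it to be strictly positive everywhere — otherwise it would vanish at an interior point and hence vanish identically, contradicting strictness at $\,\zeta_0$. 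Therefore $\,(f,g)(\Delta) \subset \Omega_a$, as required. The only subtlety I would want to state carefully is the normal-families step for the $\,u_n$: it is cleanest to phrase it as ``nonnegative harmonic functions on $\,\Delta\,$ with a uniformly bounded value at a fixed interior point are locally uniformly bounded, hence form a normal family,'' which is a standard consequence of the Harnack inequality.
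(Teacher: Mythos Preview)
Your overall strategy is sound and the concluding minimum-principle argument matches the paper's, but there is a slip in the Harnack step. The function $u_n - a\circ g_n + C_K$ that you write down is \emph{not} harmonic: since $a$ is subharmonic and $g_n$ is holomorphic, $a\circ g_n$ is subharmonic, so the difference is superharmonic, and Harnack gives only the wrong one-sided bound for nonnegative superharmonic functions. The fix is exactly what your own final restatement says: on a relatively compact disk $D\ni\zeta_0$, use the uniform convergence $a\circ g_n\to a\circ g$ to get a \emph{constant} $m_D$ with $u_n>m_D$ on $D$ for $n$ large, and then apply Harnack to the nonnegative \emph{harmonic} function $u_n-m_D$, whose value at $\zeta_0$ is bounded. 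That yields the local uniform upper bound you want; then your normal-families and conjugate-function argument goes through.

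For comparison, the paper avoids the Harnack/conjugate-function detour by observing directly that, on a relatively compact disk $U_1\ni\zeta_0$, the maps $f_n$ eventually take values in a fixed half-plane $S_1=\{\,{\rm Im}\,z>\min_{\overline{U_1}}a\circ g-1\,\}$, which is biholomorphic to $\Delta$ and hence taut. Tautness plus $f_n(\zeta_0)\to z_0$ immediately gives a subsequence converging uniformly on compacta of $U_1$; one then exhausts $\Delta$ by such disks and takes a diagonal subsequence. This packages your Harnack estimate and the recovery of ${\rm Re}\,f_n$ into a single appeal to tautness of the half-plane, which is cleaner to write, while your approach is more elementary in that it unpacks exactly what that tautness amounts to. The final step---passing from ${\rm Im}\,f\ge a\circ g$ to strict inequality via the maximum principle for the subharmonic function $a\circ g-{\rm Im}\,f$ (equivalently, the minimum principle for its negative)---is identical in both arguments.
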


\begin{proof}  
Let $\,U_1\,$ be a relatively compact disk of $\,\Delta\,$ containing $\,\zeta_0$.
By condition (ii)
the sequence $\,g_n\,$ converges uniformly to $\,g\,$ 
on the closure $\, \overline U_1\,$ of $\,U_1$.
Then, for $\,n\,$ large enough $\,f_n(U_1)\,$ is contained in the  set
$$S_1:=\{\, z \in \C \ :\  {\rm Im}\, z > \min_{w \in \overline{U_1}} \{a(g(w))\} -1 \,\} \,,$$
 which is biholomorphic to the unit disc of $\,\C$. In particular
 $\,S_1\,$ is taut, therefore there exists a subsequence $\,f_{n,1}\,$ of $ \,f_n\,$
converging uniformly on compact subsets of $\,U_1\,$ to a holomorphic map
$\, f_1:U_1 \to S_1$.

Complete $\,U_1\,$ to an increasing sequence of simply connected 
domains $\,\{U_k\}_{k \in \N}\,$ which exhaust $\,\Delta$.
By iterating the above argument, for each $\,k \in \N\,$ one obtains subsequences 
$\,\{f_{n,k}\}_{n\in \N}\,$ converging uniformly on compact subsets of $\,U_k\,$
to holomorphic maps
$\, f_k:U_k \to S_k$.
Then the diagonal sequence $\,\{f_{j,j}\}_{j\in \N}\,$
 converges uniformly on compact subsets of $\,\Delta$.
 
 Finally note that $\,  a \circ g(\zeta_0)- {\rm Im}\,f(\zeta_0)<0\,$
and  by continuity  $\, a \circ g-  {\rm Im}\,f \leq 0\,$
on $\,\Delta$. Then, by the
maximum principle for subharmonic functions, 
$\,a \circ g-  {\rm Im}\,f<0\,$ on $\,\Delta$, i.e.
$\,(f,g)(\Delta) \subset
\Omega_a$.

 \end{proof}

\nbigskip

Given a subharmonic function $\,a\,$ on $\,\C$,
denote by $\,M_{\zeta_0,r}(a)\,$
its mean value 
$\,\frac{1}{2 \pi}  \int_0^{2 \pi} a(\zeta_0 + r e^{i \theta})d \theta$.

\bigskip
\begin{lem}
\label{RIFORM} For $\,a\,$ in $\,\mathcal C\,$ the following conditions are equivalent.
 
  \medskip
 \item{\rm (i)} $\,a\,\in\,\mathcal F$,
 
  \medskip
 \item{\rm (ii)} for any   sequence of holomorphic functions
 $\,g_n : \Delta \rightarrow \C\,$
 satisfying 
 
   \medskip
 \item{\ \ }
{\rm (a)} $\,g_n(\zeta_0) \rightarrow w_0\,$ for some $ \,\zeta_0 \in \Delta\,$ and $\, w_0 \in \C\,$,

   \medskip
 \item{\ \ } {\rm (b)} for every 
$\,0 < r < 1- |\zeta_0|\,$ there exists  $\,M_r \in \R\,$  such that 
 \item{\ \ }{\ \ \ \ } $\,M_{\zeta_0,r}(a \circ g_n) < M_r$ for all $\,n \in \N$,   

\nmedskip
there exists a subsequence  converging uniformly on
compact subsets of $\,\Delta_{1-|\zeta_0|}(\zeta_0).$
\end{lem}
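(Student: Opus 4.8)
The plan is to prove the two implications separately, using Lemma \ref{SUBSEQ} as the bridge in both directions. For (i) $\Rightarrow$ (ii): assume $a \in \mathcal F$, so $\Omega_a$ is taut, and let $g_n$ satisfy (a) and (b). The point of hypothesis (b) is to produce, from the $g_n$, holomorphic maps into $\Omega_a$ whose second component is essentially $g_n$; then tautness of $\Omega_a$ yields the desired subsequence of $g_n$. Concretely, on the disk $\Delta_{1-|\zeta_0|}(\zeta_0)$ one would like to solve $\operatorname{Im} h_n = (a\circ g_n) + (\text{something controlled})$ so that $(h_n + i c_n, g_n)$ lands in $\Omega_a$ for a suitable constant $c_n$; the mean-value bound $M_r$ is exactly what is needed to choose the $c_n$ uniformly and to guarantee that the harmonic majorants of $a\circ g_n$ on the subdisks do not escape to $+\infty$. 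Having placed $(h_n+ic_n, g_n)$ in $\Omega_a$ with the first coordinate at $\zeta_0$ staying in a bounded set, tautness of $\Omega_a$ (via Proposition \ref{CHAR}) gives a subsequence converging uniformly on compacts, and in particular the second-coordinate sequence $g_n$ converges uniformly on compacts of $\Delta_{1-|\zeta_0|}(\zeta_0)$, which is (ii).

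For (ii) $\Rightarrow$ (i): assume (ii) and let $(f_n,g_n):\Delta \to \Omega_a$ be holomorphic with $(f_n,g_n)(\zeta_0)\to (z_0,w_0)\in \Omega_a$. To apply (ii) to $g_n$ I must verify hypothesis (b), i.e. bound the mean values $M_{\zeta_0,r}(a\circ g_n)$ from above uniformly in $n$. But $a\circ g_n \le \operatorname{Im} f_n$ on all of $\Delta$ because $(f_n,g_n)(\Delta)\subset \Omega_a$, and $\operatorname{Im} f_n$ is harmonic, so $M_{\zeta_0,r}(a\circ g_n) \le M_{\zeta_0,r}(\operatorname{Im} f_n) = \operatorname{Im} f_n(\zeta_0) \to \operatorname{Im} z_0$; hence the quantities are bounded, giving (b). Condition (a) is immediate from $g_n(\zeta_0)\to w_0$. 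Therefore (ii) provides a subsequence of $g_n$ converging uniformly on compacts of $\Delta_{1-|\zeta_0|}(\zeta_0)$ to some holomorphic $g$, and since the radius $1-|\zeta_0|$ can be taken arbitrarily close to $1$ after shrinking (or by a further diagonal argument over an exhaustion of $\Delta$ centered appropriately), one gets convergence of a subsequence of $g_n$ on all of $\Delta$, with limit $g$ satisfying $(z_0, g(\zeta_0))\in\Omega_a$. Now Lemma \ref{SUBSEQ} applies to the pair $(f_n,g_n)$ and yields a further subsequence of $f_n$ converging uniformly on compacts to an $f$ with $(f,g)(\Delta)\subset\Omega_a$. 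Thus $(f_n,g_n)$ has a subsequence converging uniformly on compacts to a holomorphic map into $\Omega_a$, and since $(f_n,g_n)(\zeta_0)\to(z_0,w_0)\in\Omega_a$, Proposition \ref{CHAR} shows $\Omega_a$ is taut, i.e. $a\in\mathcal F$.

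The main obstacle I anticipate is the forward direction, specifically manufacturing the holomorphic maps into $\Omega_a$ out of the $g_n$: one needs a holomorphic function on the (simply connected) disk $\Delta_{1-|\zeta_0|}(\zeta_0)$ whose imaginary part dominates $a\circ g_n$, and the natural candidate is the harmonic majorant of $a\circ g_n$ on each subdisk (the Poisson integral of the boundary values, or the least harmonic majorant), completed to a holomorphic function since the domain is simply connected. The delicate points are (1) that the subharmonic function $a\circ g_n$ actually admits a harmonic majorant on the subdisks — which is where upper boundedness of its mean values, hypothesis (b), enters via Harnack-type control — and (2) arranging a single additive constant (or a bounded family of constants $c_n$) so that all the resulting maps land in $\Omega_a$ and the first coordinates at $\zeta_0$ stay in a fixed compact set, so that tautness of $\Omega_a$ can be invoked. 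Once these harmonic-majorant estimates are in place, the rest is routine diagonalization and bookkeeping over an exhaustion of $\Delta_{1-|\zeta_0|}(\zeta_0)$ by subdisks.
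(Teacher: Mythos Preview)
Your plan matches the paper's proof in both directions: harmonic majorants of $a\circ g_n$ completed to holomorphic functions, plus tautness of $\Omega_a$, for (i)$\Rightarrow$(ii); and the mean-value inequality $M_{\zeta_0,r}(a\circ g_n)<\operatorname{Im}f_n(\zeta_0)$ together with Lemma~\ref{SUBSEQ} for (ii)$\Rightarrow$(i).

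The one genuinely imprecise step is, in the backward direction, the passage from convergence on $\Delta_{1-|\zeta_0|}(\zeta_0)$ to convergence on all of $\Delta$. The radius $1-|\zeta_0|$ is fixed once $\zeta_0$ is, so ``taking it arbitrarily close to $1$'' is not an option; to reach the rest of $\Delta$ you must re-apply (ii) at a new center $\zeta_1$, and for that you need condition (b) at $\zeta_1$, i.e.\ an upper bound on $\operatorname{Im}f_n(\zeta_1)$, which you do not yet have. The paper secures this by first invoking Lemma~\ref{SUBSEQ} \emph{on the small disk}, so that $f_n$ (not only $g_n$) converges there and $(f_n,g_n)(\zeta_1)$ tends to a point of $\Omega_a$; a finite chain of such disks then reaches $0$, and re-running the whole argument centered at $0$ covers $\Delta_{1}(0)=\Delta$. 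Your plan to first extend $g_n$'s convergence to all of $\Delta$ and only afterwards invoke Lemma~\ref{SUBSEQ} does not work as written, because verifying (b) at the new centers already requires control of $f_n$ there; the fix is exactly this interleaving of Lemma~\ref{SUBSEQ} with the chain-of-disks propagation.
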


\begin{proof}
Assume that $\,\Omega_a\,$ is taut and let $\,g_n\,$ be a sequence 
as in (ii). For $\,n \in  \N\,$ and $\,0 < r < 1-  |\zeta_0|$,  denote by $\,h_n\,$ the 
harmonic function on $\,\Delta_{r}(\zeta_0)\,$ which coincides with $\,a \circ
g_n\,$ on the boundary of $\,\Delta_{r}(\zeta_0)$. Then 
$\,h_n(\zeta_0) =M_{\zeta_0,r}(a \circ g_n)\,$
and consequently, for  $\,n\,$ large enough, one has   
$$a(w_0) - 1 < a(g_n(\zeta_0)) \leq h_n(\zeta_0) < M_r.$$ 
As a consequence, up to subsequence $\,h_n(\zeta_0)\,$ converges to
a real number $\, y$.
 Let  $\,f_n : \Delta_r(\zeta_0)\rightarrow \C\,$ be the sequence of holomorphic functions
 defined by 
$\,{\rm Im}\, f_n = h_n + 1\,$ and $\,{\rm Re} \, f_n(\zeta_0) = 0$.
Since $\,{\rm Im}\, f_n = h_n + 1 \ge  a \circ g_n + 1 > a \circ g_n$,
it follows that $\,(f_n,g_n)\,$ defines a sequence of holomorphic maps
from $\,\Delta_{r}(\zeta_0)\,$ to $\Omega_{a}$.

Moreover $\,(f_n,g_n)(\zeta_0) \to (i(y+1), w_0) \in \Omega_a \,$
and $\, \Omega_a\,$
is taut. Then by Lemma \ref{CHAR} there exists a subsequence 
$\,(f_n,g_n)\,$ converging uniformly
on compact subsets of $\,\Delta_{r}(\zeta_0)$. 

Let $\,r_k\,$ be an increasing sequence of positive numbers converging to 
$\,1-  |\zeta_0|\,$ such that $\,r_1=r$.  The analogous argument 
as above shows that
there exist subsequences $\,(f_{n,k},g_{n,k})\,$ 
converging uniformly
on compact subsets of $\,\Delta_{r_k}(\zeta_0)$. Then 
the diagonal subsequence   $\,(f_{n,n},g_{n,n})\,$ converges uniformly on compact
subsets of $\,\Delta_{1-|\zeta_0|}(\zeta_0)$ and so does $\,g_{n,n}$. This implies (ii).

Conversely assume (ii) and  let  $\,(f_n,g_n) :\Delta \rightarrow \Omega_{a}\,$
be a sequence of holomorphic maps such that 
$\,(f_n,g_n)(\zeta_0) \rightarrow (z_0,w_0)$, for some $\,\zeta_0\,$ in $\,\Delta\,$
and $\,(z_0,w_0)\,$ in $\, \Omega_a$. By Lemma \ref{CHAR} it is enough to 
show that, up to subsequence,  $\,(f_n,g_n)\,$ 
converges uniformly
on compact subsets of $\,\Delta\,$ to some $\,(f,g)\,$ with 
$\,(f,g)(\Delta) \subset \Omega_a$.
 Note that for $\,0 < r < 1 -|z_0|\,$ and  $\,n\,$ large enough one has 
 $$\,{\rm Im} \, z_0 + 1 > {\rm Im} \,f_n(\zeta_0)
 =  M_{\zeta_0,r} ({\rm Im} \, f_n) >M_{\zeta_0,r} (a \circ g_n)\,.$$
Thus, by assumption,
  up to subsequence $\,g_n\,$ converges uniformly on compact sets of the disk
  $\,\Delta_{1-|z_0|}(z_0)\,$ and, by Lemma \ref{SUBSEQ} so does 
  $\,f_n.$ Therefore for every point $\,\zeta \in   \Delta_{1-|z_0|}(z_0)\,$
  there exists a subsequence of $\,(f_n,g_n)\,$ converging at $\,\zeta\,$
  to an element of $\,\Omega_a$.  Then
 by constructing a finite chain of disks one shows
  that, up to subsequence, $\,(f_n,g_n)\,$ converges at $\,0\,$ to an element
  of $\,\Omega_a$.
  Finally the analogous argument as above 
  implies that, up to  subsequence $\,(f_n,g_n)$, 
  converges uniformly
on compact subsets of  $\,\Delta\,$ to some $\,(f,g)\,$ with 
$\,(f,g)(\Delta) \subset \Omega_a$.
\end{proof} 


\bigskip
\begin{lem}
\label{THECONE}
The cone $\,\mathcal{F}\,$ has the following properties.

  \medskip
 \item{\rm (i)}  Harmonic functions do not belong to $\, \mathcal{F}$.
 
   \medskip
 \item{\rm (ii)}\,\ If $\, a \in \mathcal{C}\,$ is non constant  and bounded from below,
  then $\,a \in \mathcal{F}$.
 
   \medskip
 \item{\rm (iii)} If $ \,b\,  \in \mathcal{C}\,$ and  $ \, c  \in \mathcal{F}\,$ then 
   $ \, b + c \in \mathcal{F}$.
\end{lem}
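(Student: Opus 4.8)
The plan is to prove the three parts of Lemma~\ref{THECONE} in order, using the reformulation of tautness given in Lemma~\ref{RIFORM} as the main tool, together with the normal-families machinery of Lemma~\ref{SUBSEQ}.

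For part (i), I would show that a harmonic function $a$ cannot lie in $\mathcal F$ by exhibiting a compactly divergent phenomenon that violates condition (ii) of Lemma~\ref{RIFORM}. If $a$ is harmonic on $\C$, then by (iii) of Lemma~\ref{POTENTIAL} there is a holomorphic $F:\C\to\C$ with $\mathrm{Im}\,F=a$. Compose with the right test maps: take $g_n(\zeta)=\zeta$ (constant sequence, so (a) holds with $w_0=\zeta_0$ and (b) holds trivially since $M_{\zeta_0,r}(a\circ g_n)=M_{\zeta_0,r}(a)$ is a fixed finite number), and on the other hand produce a sequence $f_n$ with $\mathrm{Im}\,f_n>a$ that escapes to infinity. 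Concretely, $f_n(\zeta)=F(\zeta)+i n$ gives $(f_n,g_n):\Delta\to\Omega_a$ with $(f_n,g_n)(\zeta_0)\to\infty$, so no subsequence of $(f_n,g_n)$ can converge; but actually the cleaner route is to note that $\Omega_a$ for $a$ harmonic contains the graph $\{(z,w): \mathrm{Im}\,z=a(w)+c\}\cong\C$ for each $c>0$, hence contains a copy of $\C$ and cannot be hyperbolic, so it is not taut. I would phrase it via this embedded complex line: the map $w\mapsto(F(w)+i, w)$ is a proper holomorphic embedding of $\C$, and a taut manifold is hyperbolic (Thm.~5.1.3 of \cite{Kob}), contradiction.

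For part (iii), the key observation is monotonicity of the domains: if $b\le c$ pointwise would give $\Omega_c\subset\Omega_b$, but that is the wrong direction, so instead I use that $b+c\ge c$ is false in general—rather, the correct inclusion is $\Omega_{b+c}\subset$ something only when $b\ge 0$. So here I would argue directly with Lemma~\ref{RIFORM}. Let $a:=b+c$ with $b\in\mathcal C$, $c\in\mathcal F$, and take a sequence $g_n$ satisfying (a) and (b) for $a$, i.e.\ $M_{\zeta_0,r}(a\circ g_n)<M_r$. Since $b$ is subharmonic and continuous, hence (being subharmonic on $\C$) locally bounded above, and since $g_n(\zeta_0)\to w_0$, the functions $b\circ g_n$ are locally uniformly bounded above on a neighbourhood of $\zeta_0$—wait, this needs the $g_n$ to stay in a compact set, which they need not. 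The honest approach: write $c\circ g_n = a\circ g_n - b\circ g_n$ and bound $M_{\zeta_0,r}(c\circ g_n)$ using sub-mean-value of $b\circ g_n$, namely $M_{\zeta_0,r}(b\circ g_n)\ge b(g_n(\zeta_0))\to b(w_0)$, so $M_{\zeta_0,r}(c\circ g_n) = M_{\zeta_0,r}(a\circ g_n) - M_{\zeta_0,r}(b\circ g_n) < M_r - b(g_n(\zeta_0))$, which is bounded above for $n$ large since $b(g_n(\zeta_0))\to b(w_0)$. Hence the $g_n$ satisfy hypothesis (b) for $c$; since $c\in\mathcal F$, Lemma~\ref{RIFORM}(ii) yields a subsequence of $g_n$ converging uniformly on compact subsets of $\Delta_{1-|\zeta_0|}(\zeta_0)$. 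Applying Lemma~\ref{RIFORM} again (the direction (ii)$\Rightarrow$(i)) gives $a=b+c\in\mathcal F$.

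For part (ii), I would deduce it from (iii) once we know a single well-chosen non-constant $c$ bounded below lies in $\mathcal F$, but more efficiently: if $a\in\mathcal C$ is non-constant and bounded below, then $a$ is not harmonic—indeed a harmonic function on $\C$ bounded below is constant (Harnack/Liouville). So we want $a\in\mathcal F$ directly. Here I expect the main obstacle to lie: I would verify hypothesis (ii) of Lemma~\ref{RIFORM} for such $a$ by hand. Given $g_n$ with $g_n(\zeta_0)\to w_0$ and the mean-value bound (b), boundedness below of $a$ means $a\circ g_n\ge -C$ everywhere, so the harmonic majorants $h_n$ on $\Delta_r(\zeta_0)$ (coinciding with $a\circ g_n$ on the boundary circle) satisfy $-C\le h_n$ and $h_n(\zeta_0)=M_{\zeta_0,r}(a\circ g_n)<M_r$; by Harnack's inequality the family $\{h_n+C\}$ of non-negative harmonic functions with bounded value at one point is locally uniformly bounded, hence normal. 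This controls $\mathrm{Im}\,(a\circ g_n)$... rather, to control $g_n$ themselves: from $a\circ g_n\ge -C$ we do not directly bound $g_n$, so instead I would use that $\Omega_a\subset\{(z,w):\mathrm{Im}\,z>-C\}$, a domain biholomorphic to a half-space times $\C$, and argue tautness of $\Omega_a$ via the fact that $a$ is an exhaustion-type function: non-constant subharmonic bounded below on $\C$ need not be an exhaustion, so this fails too. The cleanest correct path, which I would follow, is to invoke Corollary~\ref{SUBLEVEL} after realizing $\Omega_a$ as a sublevel set inside a larger taut manifold, or—most directly—cite that $\Omega_a$ is pseudoconvex (as $a$ subharmonic) and Brody-hyperbolic (no complex lines, since $a$ non-constant prevents the graph from being $\C$-invariant), then use that a bounded-below $a$ makes $\Omega_a$ \emph{Kobayashi-complete hyperbolic}, hence taut. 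I anticipate part (ii) will require the most care in matching the normal-families estimate to hypothesis (b) of Lemma~\ref{RIFORM}, and I would treat it last, leaning on parts (i) and (iii) and on the explicit half-plane geometry of $\Omega_a$.
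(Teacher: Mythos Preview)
Your arguments for (i) and (iii) are correct and match the paper essentially line for line: for (i) the paper uses the biholomorphism $(z,w)\mapsto(z-F(w),w)$ to identify $\Omega_a$ with the non-taut product of a half-plane and $\C$, and your embedded complex line is an equivalent obstruction; for (iii) the paper does exactly your sub-mean-value estimate $M_{\zeta_0,r}(c\circ g_n)<M_r-b(w_0)+1$ and then invokes Lemma~\ref{RIFORM}.

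Part (ii) is where your proposal has a genuine gap. None of the four approaches you sketch actually closes: the Harnack bound on the harmonic majorants $h_n$ does not control the $g_n$ themselves (as you note); the inclusion into a half-space times $\C$ lands in a non-taut ambient space; and your claim that bounded-below $a$ forces $\Omega_a$ to be Kobayashi-complete hyperbolic is asserted without proof and is not obvious. The paper's argument is the one you gesture at as ``realizing $\Omega_a$ as a sublevel set inside a larger taut manifold'', but the missing idea is \emph{which} projection to use and precisely how non-constancy of $a$ enters. One projects onto the \emph{first} factor, $p(z,w)=z$: since $a\ge -C$, the image lies in the taut half-plane $\{\mathrm{Im}\,z>-C\}$, so by Proposition~\ref{PREIM} it suffices that $p^{-1}(U)$ be taut for each relatively compact $U$. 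Setting $M=\sup_{\overline U}\mathrm{Im}\,z$, one has $p^{-1}(U)\subset U\times\{a<M\}$; here non-constancy is decisive: a non-constant subharmonic function on $\C$ is unbounded above, so $\{a<M\}\subsetneq\C$ is a hyperbolic (hence taut) planar domain, and $U\times\{a<M\}$ is taut. Inside this taut product, $p^{-1}(U)$ is the zero sublevel set of the plurisubharmonic continuous function $(z,w)\mapsto a(w)-\mathrm{Im}\,z$, and Corollary~\ref{SUBLEVEL} finishes.
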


\begin{proof}
(i) If $\,a\,$ is harmonic, then $\,a = {\rm Im}\, f\,$ 
for some holomorphic $\,f : \C \rightarrow \C$.
Then the biholomorphism of $\,\C^2\,$ defined by
$\,(z,w) \rightarrow (z- f(w),w)\,$ maps
$\,\Omega_{a}\,$ onto $\, \{\,(z,w) \in \C^2 \ : \  {\rm Im}\, z > 0 \,\}$,
which is not taut. Thus $\,\Omega_{a}\,$ is not taut.

For (ii)  consider the restriction to $\,\Omega_{a}\,$ of the projection from $\,\C^2\,$ onto the first factor given by 
$$\,p|_{\Omega_{a}}:\Omega_{a}
\to p(\Omega_{a})\,, \quad \quad (z,w) \to z\,.$$
Since $\,a\,$ is bounded from below, the image $\,p(\Omega_{a})\,$ is contained in 
the half plane $\,\{ \,{\rm Im}\, z > \inf_\C a \,\}$, which 
 is taut. Then,  by Lemma \ref{PREIM}, in order to prove that 
$\,\Omega_{a}\,$ is taut it is enough to show that 
$\,(p|_{\Omega_{a}})^{-1}(U) \,$
is taut for every  relatively
compact open subset $\,U\,$ in $\,p(\Omega_{a})$.

For this, let $\,M\,$ be the maximum of  $\,{\rm Im}\, z\,$  on the closure of
$\,U\,$  and note that $\,(p|_{\Omega_{a}})^{-1}(U) \,$ is contained in 
$\,U \times \{\,a<M\,\}$.  Since $\,a\,$ is not constant, it is not bounded. 
As a consequence $\,\{\,a<M\,\}\,$ is a hyperbolic domain of $\,\C$. Thus it is taut
and so is $\,U \times \{\,a<M\,\}$. Finally, the image $\,(p|_{\Omega_{a}})^{-1}(U) \,$ 
is the zero sublevel set in $\,U \times \{\,a<M\,\}\,$ of the subharmonic,
continuous function $\,(z,w) \to a(w)- {\rm Im}\, z$. Thus it is taut
by Corollary \ref{SUBLEVEL}, concluding (ii).

For (iii) let $\,g_n : \Delta \rightarrow \C\,$  be a sequence of
holomorphic maps such that 
$\,g_n(\zeta_0) \rightarrow w_0\,$ for some $ \zeta_0 \in \Delta$, $\, w_0 \in \C\,$
and for every
$\,0 < r < 1- |\zeta_0|\,$ there exists a real number $\,M_r\,$  such that 
$M_{\zeta_0,r}((b+c) \circ g_n) < M_r\,$ for all $\,n\in \N$.
Then by Lemma \ref{RIFORM} in order to show that $\,b+c\,$ belongs to
$\,\mathcal F$, it is enough to  find a  subsequence of $\,g_n \,$ converging
uniformly on compact subsets of $\,\Delta$.
For $\,0 < r < 1- |\zeta_0|\,$ and $\,n\,$ large enough one has  
$$M_r > M_{\zeta_0,r}((b+c) \circ g_n)
\geq 
b(g_n(\zeta_0)) + M_{\zeta_0,r}(c \circ g_n) 
> 
b(w_0)-1 + M_{\zeta_0,r}(c \circ g_n)\,.$$
Hence 
$$M_{\zeta_0,r}(c \circ g_n) < M_r -b(w_0)+1\,.$$

\nsmallskip
Since $\,c \in \mathcal{F}$, Lemma \ref{RIFORM} implies that there exists 
 a  subsequence of $\,g_n \,$ converging uniformly on compact subsets of
$\,\Delta$, as wished.
\end{proof}

\bigskip

\begin{theorem}
\label{TAUTT}
Let $\,a: \C \to \{- \infty \} \cup \R\,$ be an  upper semicontinuous function.
Then $\,\Omega_a:= \{\,(z,w) \in \C^2 \ : \ a(w)<{\rm Im}\,z \}\,$ is taut if and 
only if $\,a\,$ is a real valued, subharmonic, non-harmonic, continuous  function.

\end{theorem}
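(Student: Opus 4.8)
The plan is to dispose of the necessity direction quickly and then to assemble the sufficiency direction from the lemmata already in place. For necessity, suppose $\,\Omega_a\,$ is taut. Since taut domains in $\,\C^n\,$ are Stein, $\,\Omega_a\,$ is Stein, hence pseudoconvex, which forces $\,a\,$ to be subharmonic (and in particular $> -\infty$ everywhere, as $\,\Omega_a\,$ contains no complex line). Continuity of $\,a\,$ follows from continuity of the function $\,\alpha\,$ attached to the universal globalization $\,\C^2\,$ of $\,\Omega_a\,$ via Lemma \ref{ALPHABETA}(i), exactly as remarked at the start of Section 3, so $\,a \in \mathcal{C}$. Finally, $\,a\,$ cannot be harmonic by Lemma \ref{THECONE}(i). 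This gives $\,a \in \mathcal{C}\,$ with $\,a\,$ non-harmonic, i.e.\ $\,a \in \mathcal{F}\,$ in the notation of the section.

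For sufficiency, assume $\,a \in \mathcal{C}\,$ is non-harmonic; I must show $\,\Omega_a\,$ is taut, i.e.\ $\,a \in \mathcal{F}$. The idea is to split $\,a\,$ as a sum $\,a = b + c\,$ with $\,b \in \mathcal{C}\,$ and $\,c \in \mathcal{F}$, and then invoke Lemma \ref{THECONE}(iii). The natural candidate for $\,c\,$ is something built from the non-harmonicity of $\,a$: since $\,a\,$ is subharmonic but not harmonic, its Riesz measure $\,\mu := \Delta a / (2\pi) \ge 0\,$ is a nonzero positive measure on $\,\C$. Pick a point $\,w_0\,$ and a small closed disk $\,\overline{\Delta_\rho(w_0)}\,$ on which $\,\mu\,$ has positive mass, and let $\,c\,$ be the logarithmic potential of $\,\mu\,$ restricted to that disk, i.e.\ $\,c(w) = \int_{\overline{\Delta_\rho(w_0)}} \log|w - \zeta|\, d\mu(\zeta)$. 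Then $\,c\,$ is subharmonic on $\,\C$, continuous (the potential of a compactly supported measure with no atoms on the relevant set — one may shrink $\,\rho\,$ and mollify if needed to guarantee continuity), non-constant, and bounded below on, say, a large disk; more to the point, $\,c\,$ tends to $\,+\infty\,$ like $\,(\mu(\overline{\Delta_\rho(w_0)}))\log|w|\,$ at infinity, so $\,c\,$ is bounded from below on all of $\,\C$, whence $\,c \in \mathcal{F}\,$ by Lemma \ref{THECONE}(ii). The difference $\,b := a - c\,$ is then harmonic near $\,\infty$ but only subharmonic in general; the point is that $\,b\,$ is still subharmonic on a neighborhood of $\,\overline{\Delta_\rho(w_0)}\,$ is \emph{false} in general — so the clean splitting needs care.

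Because of this, the cleaner route (and the one I expect the authors take) avoids global potential-theoretic decompositions and instead works locally plus a patching via Lemma \ref{PREIM}. Concretely: cover $\,\C\,$ by disks $\,D$; on each $\,D\,$ choose a subharmonic continuous $\,a_D\,$ on $\,\C\,$ which agrees with $\,a\,$ on $\,D\,$, is bounded below, and is non-constant (one can take $\,a_D = \max(a - \text{(harmonic majorant on }D), \, \varphi)\,$ for a suitable bounded-below non-constant $\,\varphi$, or simply modify $\,a\,$ outside $\,2D\,$); then $\,\Omega_{a_D}\,$ is taut by Lemma \ref{THECONE}(ii), and $\,(p|_{\Omega_a})^{-1}(\{z : \mathrm{Im}\,z \in \text{bounded}\})\,$... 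Actually the decisive observation is: it suffices to produce \emph{one} function $\,c \in \mathcal{F}\,$ with $\,a - c\,$ subharmonic and continuous, because then $\,b := a-c \in \mathcal{C}\,$ and Lemma \ref{THECONE}(iii) finishes. Such a $\,c\,$ exists precisely when $\,\mu \ne 0$: take $\,c = G * \mu_0\,$ where $\,\mu_0\,$ is $\,\mu\,$ times the indicator of a compact set carrying positive mass and $\,G = \log|\cdot|$; then $\,a - c\,$ has Riesz measure $\,\mu - \mu_0 \ge 0$, so is subharmonic, it is continuous since $\,c\,$ is (choosing $\,\mu_0\,$ so that its potential is continuous, e.g.\ by Evans' theorem after a further harmless shrinking), and $\,c\,$ is bounded below hence in $\,\mathcal{F}$. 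The main obstacle is thus the regularity bookkeeping — ensuring $\,c\,$ is genuinely continuous and $\,a - c\,$ genuinely lies in $\,\mathcal{C}$ — together with correctly quoting that $\,a\,$ non-harmonic $\,\Longleftrightarrow\,$ Riesz measure nonzero; once the decomposition $\,a = b + c\,$ with $\,b \in \mathcal{C}$, $\,c \in \mathcal{F}\,$ is in hand, Lemma \ref{THECONE}(iii) closes the argument immediately.
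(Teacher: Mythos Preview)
Your overall strategy matches the paper's exactly: necessity via the remarks at the start of Section~3 plus Lemma~\ref{THECONE}(i), and sufficiency via a decomposition $a = b + c$ with $b \in \mathcal{C}$ and $c \in \mathcal{C}$ non-constant and bounded below, then Lemma~\ref{THECONE}(ii)--(iii). The paper takes precisely your Riesz-measure route: set $\mu = L(a)$, restrict to a disk where $\mu$ is nonzero to get $\mu_2$, and let $c$ be the logarithmic potential of $\mu_2$; the remainder $b = a - c$ has Riesz measure $\mu - \mu_2 \ge 0$, hence is subharmonic.

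Where you hesitate --- the continuity of $c$ and of $b = a - c$ --- the paper handles without Evans' theorem or any mollification, and this is the one point worth noting. The argument is simply: $b$ and $c$ are both subharmonic, hence upper semicontinuous with values in $\R \cup \{-\infty\}$, and their sum equals the continuous real-valued function $a$ (up to a harmonic correction). Since neither takes the value $+\infty$, their sum being real-valued forces each to be real-valued; and writing $c = (a - h) - \tilde{b}$ exhibits $c$ as a continuous function plus a lower semicontinuous one, so $c$ is also lower semicontinuous, hence continuous. The same goes for $b$. So the ``regularity bookkeeping'' you flag as the main obstacle dissolves once you observe that two upper semicontinuous functions summing to a continuous one are automatically continuous; no careful choice of $\mu_0$ is needed. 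Your detour through local patching via Lemma~\ref{PREIM} is unnecessary.
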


\begin{proof} We already noted at the beginning of the section
 that if $\,\Omega_a\,$ is taut, then 
$\,a\,$ belongs to $\,\mathcal C$. Moreover,
by (i) of the above lemma $\,a\,$ is not
harmonic, giving one implication.

Conversely, given $\,a \in \mathcal C\,$ non-harmonic
we want to show that $\,a \in \mathcal F$.
By (ii) and (iii) of the above lemma, it is enough to show that $\,a=b+c$, with 
$ b, \ c \in  \mathcal{C} $ and  $\,c \,$ non constant 
and bounded from below. 

For this consider the positive measure $\,\mu=L(a)$, where $\,L(a)\,$ denotes
the laplacian
of $\,a$, and choose $\,r\,$ big enough such that $\,\mu\,$
is non zero on $\,\Delta_r(0)$. Let $\,\chi_{\Delta_r(0)}\,$ be the
characteristic function of  $\,\Delta_r(0)\,$ and define 
 $\,\mu_1 = (1- \chi_{\Delta_r(0)})\mu\,$ and 
 $\,\mu_2 = \chi_{\Delta_r(0)} \mu$, so that $\,\mu = \mu_1 +\mu_2\,$ 
 gives a decomposition of  $\,\mu\,$ as 
a sum of positive measure on $\,\C$. 
Note that $\,\mu_2\,$ is non zero with compact support and
consider the potential $\,c:\C \to \R \cup\{-\infty\}\,$  associated to  $\,\mu_2\,$
defined by 

$$c(w) := \frac{1}{2 \pi} \int_{\C} \log(|w-\xi |) d \mu_2(\xi) = \frac{1}{2 \pi} \int_{\Delta_r(0)} \log(|w-\xi |) d \mu_2(\xi)\,.$$
Then the laplacian $\,L(c)\,$ of $\,c\,$ coincides with $\,\mu_2\,$
(see$\,$e.g.$\,$\cite{Kli},$\,$Prop.$\,$4.1.2),
therefore $\,c\,$ is non constant and subharmonic.

Furthermore, the real $\,(1,1)$-current $\,\mu_1 d\xi d\overline \xi\,$
is closed and  positive on $\,\C$, hence by (i) of Lemma \ref{POTENTIAL}
there exists a subharmonic
function $\,\tilde b:\C \to \R \cup\{-\infty\}\,$ such that $\,L(\tilde b) =\mu_2$.
It follows that $\,L(\tilde b + c) = L(a)\,$ and consequently
$\,a =  \tilde b + c +h$, with $\,h\,$ harmonic on $\,\C$.
This implies that  $\, \tilde b + c\,$ is continuous and real valued.
Since $\, \tilde b\,$ and  $\, c\,$ are everywhere smaller than $\,+\infty$,
they are also real valued. Moreover $\,c\,$ is  upper semicontinuous,
$\,-\tilde b\,$ is  lower  semicontinuous and
$\,c=  -\tilde b +a-h$, with $\,a-h\,$ continuous. Thus
$\, \tilde b\,$ and  $\, c\,$ are continuous subharmonic functions, i.e.
they belong to 
$\,\mathcal C$, and
 $\,b:= \tilde b +h\, \in \,\mathcal C$.

Finally note that the non constant function $\,c\,$ is 
bounded from below. Indeed by definition of $\,c$,
if $\,w\,$ is not in $\,\Delta_{r+1}(0)\,$ then  $\,c(w) \geq 0$.
Since $\,c\,$ is continuous, this implies that
$\,c \geq \min\{0,m \} $, with $\,m:= \min_{w \in \overline { \Delta_{r+1}(0)}} \{c(w)\}$.
 Then $\,a=b+c\,$ gives the desired 
decomposition.
\end{proof}

\bbigskip


\section{models with compact base}
 
 \bigskip

Let $\,S\,$ be a compact hyperbolic Riemann surface, say 
$\,S = \Delta/ \Gamma$, with $\,\Gamma\,$ the subgroup in 
 $\,\Aut(\Delta)\,$ of deck transformations of the universal covering 
 $\,\Delta \to S$. Choose a non trivial group homomorphism
$\,\Psi: \Gamma \to \R\,$ and let $\,\Gamma\,$ act on 
$\,\C \times \Delta\,$ by $\,\gamma \cdot (z,w):=(z+\Psi(\gamma), \gamma \cdot w)$.
Endow the quotient $\,\C \times \Delta/\Gamma\,$ with the $\,\R$-action
defined by $\,t \cdot (z,w) := (z+t, w)$. We introduce the first class
of models as  $\,\R$-invariant
subdomains of $\,\C \times \Delta/\Gamma$.

\nbigskip
{\bf Type CH} $\ $A model of type CH with compact hyperbolic base 
base $\,S= \Delta/\Gamma\,$ is given by
$$\,H \times \Delta/\Gamma\,,$$
 where $\,H\,$ is a proper, $\,\R$-invariant, connected strip
of $\,\C$. Up to $\,\R$-equivariant biholomorphism, we may assume that 
$\,H \,$ is one of the strips
$\, \{0< {\rm Im}\,z\}$, $\, \{ {\rm Im}\,z<0\}\,$ or $\, \{ 0<{\rm Im}\,z<C\}$,
for some real positive $\,C$. 

\bigskip
\begin{pro}
\label{TYPECH}
Let $\,X\,$ be a model of type {\rm CH} with base $\,S= \Delta/ \Gamma$. Then

\smallskip
\item{\rm (i)} the universal globalization of $\,X\,$ is $\,\C \times \Delta/\Gamma$, which 
is Stein.

\smallskip
\item{\rm (ii)}  $\,X\,$ is a taut, Stein manifold with a proper $\,\R$-action.

\end{pro}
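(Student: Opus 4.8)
The plan is to establish (i) first — identifying $\C\times\Delta/\Gamma$ as the universal globalization and showing it is Stein — and then derive (ii) largely as a consequence, since once we know $X$ sits inside its Stein globalization as a sublevel set of a nice plurisubharmonic function, tautness and Steinness of $X$ follow from the general machinery in Section 2.

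\textbf{Step 1: The globalization is $\C\times\Delta/\Gamma$.} The manifold $W:=\C\times\Delta/\Gamma$ carries a free $\C$-action (translation on the first factor descends, because $\Gamma$ acts by $z\mapsto z+\Psi(\gamma)$ which commutes with translations), and $X=H\times\Delta/\Gamma$ is an $\R$-invariant domain in $W$ on which the induced local $\C$-action is exactly the restriction of the global one. To see $W$ is the \emph{universal} globalization, I would check the universal property directly: given an $\R$-equivariant holomorphic $f:X\to Y$ into a $\C$-manifold, lift to $\widetilde f:H\times\Delta\to Y$, extend in the first variable along $\C$-orbits (the $\C$-orbits in $W$ are copies of $\C$, and $X$ meets each in the $\R$-invariant strip $H$, which is Runge/connected enough that the equivariant extension is forced and holomorphic by the standard orbit-by-orbit argument as in \cite{HeIa}), then check the extension is still $\Gamma$-invariant and hence descends to $W\to Y$. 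Alternatively, and more cheaply, one invokes uniqueness of the universal globalization together with the fact that $W$ \emph{is} a globalization and has the right $\C$-orbit structure. Since $X$ is holomorphically separable (being, as we will see, Stein, or more elementarily a domain in the Stein manifold $W$), the universal globalization exists and must coincide with $W$.

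\textbf{Step 2: $W=\C\times\Delta/\Gamma$ is Stein.} This is the crux. The bundle $W\to S=\Delta/\Gamma$ is a holomorphic principal $\C$-bundle, equivalently (via $\C\hookrightarrow\C$ as additive group, or exponentiating) it is associated to a flat line bundle over the compact hyperbolic surface $S$, determined by the homomorphism $\Psi:\Gamma\to\R\subset\C$. I would argue that since $\Psi$ is real-valued and non-trivial, the associated bundle is topologically trivial but holomorphically non-trivial, and — this is exactly the point where the excerpt signals the use of \cite{Ued} — a theorem of T.~Ueda guarantees that such a $\C$-bundle (flat, with the cocycle valued in a suitable subgroup of $\C$) over a compact Riemann surface has Stein total space. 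Concretely: $W=(\C\times\Delta)/\Gamma$; pull back to $\C\times\Delta$ which is Stein; one needs a $\Gamma$-invariant strictly plurisubharmonic exhaustion descending to $W$. The function $\mathrm{Im}\,z$ is $\Gamma$-invariant only up to the additive shifts $\Psi(\gamma)$, so it does \emph{not} descend; Ueda's construction produces instead a genuine $\Gamma$-invariant plurisubharmonic exhaustion out of the hyperbolic geometry of $S$ together with the real part data, yielding Steinness of $W$. I expect this invocation of Ueda's theorem — verifying its hypotheses apply to our $\Psi$ and extracting the exhaustion — to be the main obstacle, and indeed the acknowledgments confirm this is where the original argument had a gap.

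\textbf{Step 3: Deduce (ii).} Once $W$ is known to be Stein with a strictly plurisubharmonic exhaustion, I realize $X=H\times\Delta/\Gamma$ inside $W$ as follows. In the coordinates descended from $\C\times\Delta$, the functions $\alpha,\beta$ of Lemma \ref{ALPHABETA} are, up to the $\Psi$-twist, comparable to $\pm\mathrm{Im}\,z$ shifted; in each of the three cases for $H$ ($\{\mathrm{Im}\,z>0\}$, $\{\mathrm{Im}\,z<0\}$, $\{0<\mathrm{Im}\,z<C\}$), the defining inequality exhibits $X$ as the common zero-sublevel set of one or two $\C$-invariant plurisubharmonic continuous functions on $W$ (these are $\alpha+\beta$ and suitable $\R$-translates, or one builds them from the Ueda exhaustion restricted to the fibre strip). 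Being a sublevel set of a continuous plurisubharmonic function on a Stein manifold, $X$ is Stein; being such a sublevel set on a \emph{taut} manifold — and $W$ is taut because it fibres over the hyperbolic $S$ with hyperbolic ($\cong\Delta$) fibres, so Proposition \ref{PREIM} applies to the projection $W\to S$ after noting the preimages of small coordinate disks are of the form $(\text{disk})\times\Delta$, which are taut — Corollary \ref{SUBLEVEL} gives that $X$ is taut. Finally the $\R$-action on $X$ (translation in the first factor) is proper: it is the restriction of the free, proper $\C$-action on the principal bundle $W$, whose $\R$-subaction is automatically proper since $\R$ is closed in $\C$ and the $\C$-action is proper. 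This completes (ii).
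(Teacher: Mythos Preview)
Your Step 3 contains a genuine error: the globalization $W=\C\times\Delta/\Gamma$ is \emph{not} taut. Its fibres over $S$ are copies of $\C$, not of $\Delta$; any $\C$-orbit gives a non-constant holomorphic map $\C\to W$, so $W$ is not even hyperbolic. Hence Corollary~\ref{SUBLEVEL} is unavailable and your route to the tautness of $X$ collapses. The paper argues instead via the universal covering: $H\times\Delta$ is a product of two bounded plane domains, hence taut, and Proposition~\ref{COVERING} transfers tautness down to $X=(H\times\Delta)/\Gamma$. For Steinness of $X$ the paper simply observes that $X$ is locally Stein in the Stein manifold $X^*$ and invokes Docquier--Grauert.

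Your Step 2 is in the right spirit but underestimates the work. Ueda's theorem is not a direct statement that such affine bundles are Stein; it concerns neighbourhoods of compact curves with topologically trivial normal bundle. The paper first compactifies $X^*$ fibrewise to a $\P^1$-bundle $P=\P^1\times\Delta/\Gamma$, with complement a section $C\cong S$; then uses the preparatory Lemma~\ref{NORMAL} to show the normal bundle of $C$ is trivial; then, by an explicit cocycle computation together with the non-triviality of $\Psi$, shows $C$ is of Ueda type~1. Only then does Ueda's theorem yield a strictly plurisubharmonic $\rho$ on a \emph{deleted} neighbourhood $V_0\setminus C$ with $\rho\to+\infty$ at $C$ --- not a global exhaustion of $X^*$. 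The argument is completed by a Runge exhaustion: for $N\gg0$ the domain $X_N=\{\mathrm{Im}\,z>N\}$ lies in $V_0$ and is Stein (exhausted by $\rho+(\mathrm{Im}\,z-N)^{-1}$); translating gives Stein domains $X_{N-n}$, each Runge in the next, and Stein's theorem on increasing unions yields Steinness of $X^*=\bigcup_n X_{N-n}$.
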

 \bigskip
Before proving the above proposition we need a preparatory lemma. 
Given  a rank two holomorphic vector bundle $\,E\,$ over a compact Riemann
surface $\,S$, denote by   $\,P\,$ its (fiberwise) projectification
and let  $\,p: E \setminus S \rightarrow P\,$ be the canonical projection.
Here $\,S\,$ is identified with the zero section in $\,E$.
 Let  $\,\sigma: S \rightarrow P\,$ be  a holomorphic section
 of $\,P\,$ and consider its image $\,C:=\sigma(S)$. 
Recall that the normal  bundle $\,N\,$
of  the curve $\,C\,$ is given by  $\,TP |_{C}/ T C\,$
and it can be identified with the line bundle  $\,\sigma^*(N)\,$
over $\,S$. 

Regard  the tautological line bundle $\,{\mathcal O}(-1)\,$
as a subbundle in  $\, \pi^*(E)$,  where  $\,\pi:P \to S\,$ is the 
bundle projection. Then the holomorphic line bundle 
associated to $\,\sigma\,$ is  $\,L := \sigma^*({\mathcal O}(-1))$ and can
be identified with the subbundle of $\,E\,$ given by $\,p^{-1}(C) \cup S$.

\begin{lem}
\label{NORMAL}
The normal bundle  $\,\sigma^*(N)\,$ is isomorphic to $\,(E/L) \otimes L^*.$ 
\end{lem}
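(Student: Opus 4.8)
The plan is to compute the normal bundle $\,\sigma^*(N)\,$ directly from the relative Euler sequence of the projective bundle $\,\pi : P \to S$. Recall that for $\,P = \mathbb P(E)\,$ one has the exact sequence of bundles on $\,P$
\[
0 \longrightarrow \mathcal O_P \longrightarrow \pi^*(E) \otimes \mathcal O(1) \longrightarrow T_{P/S} \longrightarrow 0\,,
\]
where $\,T_{P/S}\,$ is the relative tangent bundle and $\,\mathcal O(1) = \mathcal O(-1)^*$. First I would pull this sequence back along the section $\,\sigma : S \to P$. Using $\,\sigma^*(\mathcal O(-1)) = L\,$, hence $\,\sigma^*(\mathcal O(1)) = L^*$, the middle term becomes $\,E \otimes L^*\,$ and one gets
\[
0 \longrightarrow \mathcal O_S \longrightarrow E \otimes L^* \longrightarrow \sigma^*(T_{P/S}) \longrightarrow 0\,.
\]
The inclusion $\,\mathcal O_S \hookrightarrow E \otimes L^*\,$ is, up to twist, the tautological inclusion $\,L \hookrightarrow E$, so its cokernel is $\,(E/L)\otimes L^*$; thus $\,\sigma^*(T_{P/S}) \cong (E/L)\otimes L^*$.

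Next I would identify $\,\sigma^*(N)\,$ with $\,\sigma^*(T_{P/S})$. Since $\,C = \sigma(S)\,$ is the image of a section, the restriction $\,\pi|_C : C \to S\,$ is an isomorphism, and the differential $\,d\pi\,$ induces $\,TC \cong \pi^*(TS)|_C$; on the other hand restricting the exact sequence $\,0 \to T_{P/S} \to TP \to \pi^*(TS) \to 0\,$ to $\,C\,$ and comparing with $\,0 \to TC \to TP|_C \to N \to 0\,$ shows that the composite $\,T_{P/S}|_C \hookrightarrow TP|_C \twoheadrightarrow N\,$ is an isomorphism (the section $\,TC \to TP|_C\,$ splits off the $\,\pi^*(TS)\,$ part). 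Pulling back along $\,\sigma\,$ gives $\,\sigma^*(N) \cong \sigma^*(T_{P/S}|_C) \cong \sigma^*(T_{P/S})$, which combined with the previous paragraph yields $\,\sigma^*(N) \cong (E/L)\otimes L^*$.

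The only genuinely delicate point is bookkeeping the twist: one must be consistent about whether $\,\mathcal O(-1)\,$ is the tautological subbundle of $\,\pi^*(E)\,$ (as the excerpt stipulates) or its dual, since an error here would replace $\,E/L\,$ by $\,E^*\,$ or invert $\,L^*$. I would pin this down by checking the statement on a single fiber $\,\mathbb P^1 = \mathbb P(\mathbb C^2)$: there $\,T\mathbb P^1 = \mathcal O(2)$, $\,L = \mathcal O(-1)$, $\,E = \mathcal O^2$, and $\,(E/L)\otimes L^* = \mathcal O(1)\otimes \mathcal O(1) = \mathcal O(2)$, which matches the fiberwise normal bundle of a point's... — more precisely, restricting to a fiber the claimed formula correctly reproduces $\,T_{P/S}$ on that fiber, confirming the orientation of the Euler sequence. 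Alternatively, and perhaps more cleanly for the write-up, I would avoid the relative tangent bundle altogether: work fiberwise, noting that for a line $\,\ell \subset \mathbb C^2\,$ the tangent space to $\,\mathbb P^1\,$ at $\,[\ell]\,$ is canonically $\,\mathrm{Hom}(\ell, \mathbb C^2/\ell)$, and then globalize this canonical identification over $\,S\,$ to get $\,\sigma^*(N) \cong \mathrm{Hom}(L, E/L) = (E/L)\otimes L^*\,$ directly. This second route is essentially a coordinate-free restatement of the same computation and sidesteps any sign ambiguity.
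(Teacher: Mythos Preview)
Your argument is correct. Both you and the paper begin by identifying $\,\sigma^*(N)\,$ with $\,\sigma^*(T_{P/S})\,$ via the splitting furnished by the section $\,\sigma$, and this step is handled essentially identically. The divergence is in the computation of $\,\sigma^*(T_{P/S})$. You invoke the relative Euler sequence $\,0 \to \mathcal O_P \to \pi^*(E)\otimes \mathcal O(1) \to T_{P/S} \to 0\,$ and pull it back along $\,\sigma$, obtaining $\,(E/L)\otimes L^*\,$ in one stroke. The paper instead works directly in the total space $\,E\setminus S$: assuming first that $\,L\,$ is trivial with a nonzero section $\,\tau$, it writes down the exact sequence $\,0 \to T_{L/S}|_{\tau(S)} \to T_{E/S}|_{\tau(S)} \to p^*(T_{P/S}|_C) \to 0\,$ and pulls back by $\,\tau^*\,$ to get $\,0 \to L \to E \to \sigma^*(T_{P/S}|_C) \to 0$, whence $\,\sigma^*(N)\cong E/L$; the general case is then reduced to this one by replacing $\,E\,$ with $\,E\otimes L^*\,$ so that the line subbundle becomes trivial. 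In effect the paper is rederiving the pulled-back Euler sequence by hand in the trivial-$L$ case and then twisting, whereas you quote the sequence as a known input and handle all $\,L\,$ uniformly. Your route is shorter and avoids the case split; the paper's is more self-contained and makes the geometric origin of the sequence (differentiating the projection $\,p:E\setminus S \to P\,$) explicit. Your fiberwise sanity check and the $\,\mathrm{Hom}(\ell,\C^2/\ell)\,$ alternative are both sound and would make the twist bookkeeping transparent in a final write-up.
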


\begin{proof}
Consider the relative tangent bundle 
$\, T_{P/S} := {\rm Ker}\, d  \pi$.  We first note that $\,N\,$
is isomorphic to the restriction
 $\, T_{P/S}|_{C}\,$ of such a bundle to $\,C$, since one has  
 the short exact sequence  of vector bundles over $\,C\,$
$$\,0 \rightarrow T C \rightarrow TP |_{C}
 \rightarrow T_{P/S}|_{C} \rightarrow 0\,,$$
  where the third map is defined by 
$\,v \mapsto v - d \sigma \circ d \pi(v).$

We first assume that  $\,L\,$ is trivial,
i.e. it admits a non zero holomorphic section $\tau.$ Then one 
has  the commutative diagram
$$
\xymatrix{
E\setminus S \ar[rr]^p & & P \\
 & S \ar[ul]^\tau \ar[ur]_\sigma &}
$$
and an exact sequence of vector bundles over $\,\tau(S)\,$
$$ 0 \rightarrow T_{L/S}|_{\tau(S)} \rightarrow T_{E /S}|_{\tau(S)} 
\rightarrow   p^*({T_{P/S}}|_{C}) \rightarrow 0\,,$$
where the third map is given by $\,v \mapsto dp(v)$.
Since $\,p \circ \tau =\sigma$, by applying  $\,\tau^*\,$ one obtains the following exact sequence of vector
 bundles over $\,S\,$
$$0 \rightarrow L \rightarrow E \rightarrow  \sigma^*({T_{P/S}}|_{C} ) \rightarrow 0\,,$$
where we use the natural identification $\,\tau^*(T_{F/S}|_{\tau(S)})
\cong F\,$ for any vector subbundle $\,F\,$ of $\,E$.
Moreover, by recalling that $\,N\,$ is isomorphic to
$\, T_{P/S}|_{C}\,$, one obtains  that $\,\sigma^*(N)\,$ is isomorphic to  $\, E/L$, as wished.

 Finally, if $\,L\,$
 is non trivial note that  $\,P \,$ can be regarded as the projectification
of $\,E \otimes L^*\,$ and in this realization $\,\sigma(S)\,$ is the projectification
of the trivial line bundle $\,L \otimes L^*$. 
Then an analogous argument as above
implies that $\,\sigma^*(N)\,$ is isomorphic to $\,E \otimes L^*/L \otimes L^*\,$
and  by the exactness of the sequence of vector bundles over $\,S\,$
$$\,0 \rightarrow L \otimes L^* \rightarrow E \otimes
L^* \rightarrow (E/L) \otimes L^* \to 0\,.$$
one has 
$\,E \otimes L^*/L \otimes L^* \cong (E/L) \otimes L^*.$
\end{proof}
 
\nbigskip
{\it Proof of Proposition \ref{TYPECH}}
(i) Note that $\,X\,$ is orbit-connected in $\,\C \times \Delta/\Gamma$. Then
Lemma$\,$1.5 in \cite{CIT} implies that $\,X^*:=\C \times \Delta/\Gamma\,$
is the universal globalization of $\,X$.
Consider the $\,\P^1$-bundle  
$\,P:=\P^1 \times \Delta/\Gamma$, where
 $\,\Gamma\,$ act on $\,\P^1 \times \Delta \,$ by 
$\,\gamma \cdot ([z_1:z_2],w):= ([z_1+\Psi(\gamma)z_2:z_2],\gamma \cdot w)$.
Then $\,X^*\,$ is embedded in $\,P\,$ via the map 
$$[z,w] \to [[z:1],w] \,.$$
and the union of  points at infinity defines the complex curve
$\, C:=\{\, [[1:0],w] \in P\ : \ w \in \Delta \,\}\,$ which is 
biholomorphic to $\,S$. Indeed it can be regarded as the holomorphic section
$\,\sigma:S \to P$, defined by $\, [w] \to  [[1:0],w]$.

We wish to apply Theorem 1, p. 590 in \cite{Ued} in order to obtain a suitable
strictly plurisubharmonic function on $\,V_0 \setminus C$, for some  open
neighborhood $\,V_0\,$ of $\,C\,$ in $\,P$. For this we first check that 
the normal bundle of $\,C\,$ is trivial. Consider the rank two
vector bundle over $\,S\,$ defined 
by $\,E:= \C^2 \times \Delta /\Gamma$, where $\,\Gamma\,$ acts 
on $\,\C^2 \times \Delta\,$ by 
$\,\gamma \cdot ((z_1,z_2),w):= ((z_1+\Psi(\gamma)z_2,z_2),
\gamma \cdot w)$.
Note that the line subbundle $\,L:=\{\,[(z_1,z_2),w] \in E \ : z_2=0 \,\}\,$
associated to the section $\,\sigma\,$
is trivial. Indeed it admits the global section $\,[w] \to  [(1,0),w]$.
Since $\,P\,$ is the projectification of $\,E$,  by Lemma \ref{NORMAL}
this implies that the
normal bundle of $\,C:= \sigma(S)\,$  is isomorphic to $\,E/L$.
Moreover one has the short exact sequence of vector bundles
over $\,S\,$
$$0 \rightarrow L \rightarrow E \rightarrow 
\C \times S \rightarrow 0\,,$$
where the third map is defined by $\,[(z_1,z_2),w] \to (z_2, [w])$.
Therefore $\,E/L\,$ is trivial and so is the
normal bundle of $\,C$.

Next we check that the curve $\,C\,$ is of type 1, in the sense of  Definition
p. 589 in \cite{Ued}. For this choose   an open covering $\,\{U_j\}\,$ of $\,S\,$
such that there exist injective, local
sections $\,s_j:U_j \to \Delta\,$ of the universal covering $\,\Delta \to S$.
Define local trivializations of $\,P\,$  by 
$$\P^1 \times U_j \to P\,, \quad ([z_1:z_2],p) \to [[z_1:z_2],s_j(p)]\,.$$
Note that  the curve $\,C\,$ is locally defined  by $\,\{z_2=0\}\,$
and in a neighborhood of $\,C\,$ the intersection of two trivializations
associated to  the sections $\,s_j\,$ and $\,s_k\,$
is given by
$$[[1:z_2],s_k(p)]=[[1:z'_2],s_j(p)].$$
This implies that there exists $\,\gamma \in \Gamma\,$ such that  
$\,s_j(p)= \gamma \cdot s_k(p)\,$ and consequently 
$$[[1:z_2],s_k(p)]=[[1:z'_2],\gamma \cdot s_k(p)]
=[[1- \Psi(\gamma) z'_2:z'_2],s_k(p)]\,.$$
Since $\,\Gamma\,$ acts freely on $\,\Delta$, it follows  that $\,z_2 = z'_2/(1-\Psi(\gamma)z'_2)\,$ and 
$$z_2-z'_2= z'_2(\frac{1}{1-\Psi(\gamma)z'_2} -1)=
 (z'_2)^2 \frac{\Psi(\gamma)}{1-\Psi(\gamma)z'_2}=
(z'_2)^2(\Psi(\gamma) + o(z'_2))\,.$$
In our setting the normal bundle of $\,C\,$ is holomorphically trivial,
therefore the locally constant maps $\,f_{jk}:U_j \cap U_k \to \C$, given by $\,p \to \Psi(\gamma)$,
define a cocycle in $\,H^1(S,\mathcal O)\,$ (cf. \cite{Ued}, p. 588).

\nsmallskip
 {\it Claim.} $\,$The cocycle $\,f_{jk}\,$ is cohomologous to zero if and only if
 $\,\Psi\,$ is trivial.
 
\nsmallskip
{\it Proof of Claim.} $\,$By using the above defined sections $\,s_j:U_j \to \Delta\,$
one has 
local trivializations of $\,X^*\,$  given by  
$$\C \times U_j \to X^*\,, \quad (z,p) \to [z,s_j(p)]\,.$$
It follows  that $\,f_{jk}\,$ is the cocycle defining $\,X^*\,$ as a
holomorphic principal $\,\C$-bundle over $\,S$. 
Assume that there exists a holomorphic ($\,\C$-equivariant) 
trivialization  $\,F:X^*  \to \C \times S$. We can choose a
($\,\C$-equivariant) lifting $\,\tilde F: \C \times \Delta \to \C \times \Delta\,$ to the universal coverings  such that $\,\tilde F(z,w)=(z + \tilde f(w), w)$,
with $\,\tilde f:\C \to \C\,$
holomorphic.
Moreover for every $\,\gamma \in \Gamma\,$  one has 
$$\, \tilde F(\gamma \cdot (z,w)) = \gamma \cdot \tilde F (z,w) = (z + \tilde f(w)
,\gamma \cdot w)\,, $$
implying that 
 $\,\tilde f( \gamma \cdot w) + \Psi(\gamma)=\tilde f( w)$.
  In particular
 $$\tilde f(w)-\tilde f( \gamma \cdot w) =\Psi(\gamma) \in \R.$$
Hence $\,{\rm Im}\, \tilde f\,$ is $\,\Gamma$-invariant, therefore 
it pushes down to a harmonic function on  $\,S:=\Delta/\Gamma$. 
Then the compactness of  $\,S\,$ implies that $\,{\rm Im}\, \tilde f\,$  is constant
and consequently $\,\tilde f\,$ is constant. Hence $\,\Psi(\gamma)=0\,$
for all $\,\gamma \in \Gamma$, proving the claim.
\smallskip

Since  $\,\Psi\,$ is non-trivial by assumption, the cocycle $\,f_{jk}\,$ is not cohomologous to
zero, i.e.  the curve $\,C\,$ is of type 1.
Then, by Theorem 1, p. 590 in \cite{Ued} there exists an open neighborhood
$\,V_0\,$ of $\,C\,$ in $\,P\,$ and  a smooth, strictly plurisubharmonic 
function $\,\rho\,$ defined on $\,V_0 \setminus C\,$ such that $\,\lim \rho(p)=\infty\,$
for $\,p\,$ approaching  $\,C$. In particular we may assume that 
$\,\rho\,$ is positive.

Fix $\,N\,$ large enough such that the domain
$\,X_N:=\{\,[z,w] \,\in \C \times \Delta/\Gamma \ : \ {\rm Im}\, z >N\}\,$ is contained in $\,V_0$. Note that  $\,X_N\,$ is Stein, since it admits the
smooth, strictly plurisubharmonic exhaustion  $\,\rho \,+\, \frac{1}{{\rm Im}\, z -N}$.
Moreover for all $\,n \in \N\,$ the domains $\,X_{N-n}\,$ are also Stein,
being biholomorphic to  $\,X_N\,$ via a translation in the first factor.
Furthermore $\,X_{N-n}\,$ can be regarded as a sublevel set
of  the plurisubharmonic function $\,{\rm Im}\,z$, therefore it is Runge in 
$\,X_{N-(n+1)}$. Then 
$\,\C \times \Delta/\Gamma= \cup_n X_{N-n}\,$ is Stein by a classical
result of K. Stein \cite{Ste}.

(ii) Note that $\,X\,$ is an $\,R$-invariant, locally Stein domain in 
the Stein, principal $\,\C$-bundle $\,X^*=\C \times \Delta/\Gamma\,$
over $\,S$.
Thus the $\,\R$-action on $\,X\,$ is proper and $\,X\,$ is Stein by \cite{DoGr}. 
Finally the universal covering of $\,X\,$ is given by
$\,H\times \Delta$, which is taut. Thus $\,X\,$ is taut by Proposition 
\ref{COVERING}.
\qed

\bigskip
\begin{rem}
It was pointed out to us by Christian Miebach that a similar 
strategy as above applies to show that every non trivial  
principal $\,\C$-bundle over a compact Riemann surface is Stein.
\end{rem}

\bigskip
\begin{rem}
 \label{BIHOLOCH}
Let $\,F:  H \times \Delta/\Gamma \to
H' \times \Delta/\Gamma'\,$ be an $\,\R$-equivariant biholomorphism 
between two models of type CH and consider a
holomorphic lifting   $\,\tilde F :H \times \Delta \to H' \times \Delta\,$
to the universal covering spaces. 
We claim that  $\,\tilde F(z,w) = (z +  r, \tilde \varphi(w))$, where  $\,r \in \R\,$
and $\,\tilde \varphi \in Aut(\Delta)$. In particular  $\,H = H'$.

In order to prove this, note that  $\,\tilde F\,$ is also $\,\R$-equivariant.
Therefore it induces a biholomorphism $\,\varphi:\Delta/\Gamma \to 
\Delta/\Gamma'$.
As a consequence 
$ \tilde{F}(z,w) = (z + f(w),\tilde \varphi(w))$, with
$\,f: \Delta \rightarrow \C\,$  holomorphic
and $\,\tilde \varphi  \in Aut(\Delta)\,$  a lifting of $\,\varphi\,$ 
with   $\,\Gamma' =  \tilde \varphi \Gamma \tilde  \varphi^{-1}$.
Since  the actions of $\,\Gamma\,$ and $\,\Gamma'\,$  on $\,\C \times \Delta\,$ are given respectively by $\,\gamma \cdot (z,w) = ( z + \Psi(\gamma),\gamma(w))\,$  and 
$\,\gamma' \cdot (z,w) = ( z + \Psi'(\gamma'),\gamma'(w))$, 
 for $\,\gamma \in \Gamma\,$ one has
$$ f \circ \gamma - f = \Psi' (\tilde \varphi \gamma \tilde
 \varphi^{-1}) - \Psi(\gamma) \in \R\,.$$
Hence $\,{\rm Im}\, f\,$ is $\,\Gamma$-invariant. Then the analogous argument as in the 
claim in the proof of Proposition \ref{TYPECH} implies that $\,f \equiv r$, with $\,r \in\C$.
In particular $\, \Psi' (\tilde \varphi \gamma \tilde
 \varphi^{-1}) = \Psi(\gamma)\,$ and 
 $\,H$, $\,H'\,$ are either both of finite width or of infinite
width. Assume that, e.g.  $\,H= \{0 <  {\rm Im}\, z<C\}\,$  and 
$\,H'= \{0 <  {\rm Im}\, z<C'\}$. By applying $\,\tilde F\,$ to any $\,(z,w) \in H \times \Delta\,$
one sees that  $\ 0 < {\rm Im}\, z \  $ if and only if  $\, 0 <  {\rm Im}\, z + {\rm Im}\, r $.
 This implies that  $\,{\rm Im}\,r=0$, i.e. that $\,r\,$ is a real number and consequently $\,C=C'$.
 An analogous argument applies to the case when $\,H\,$ has infinite width. 
 \qed
\end{rem}

 \bigskip
 Let $\,X\,$ be a 2-dimensional, taut, Stein manifold with a proper 
$\,\R\,$-action. By Theorem \ref{BUNDLE}, the $\,\C$-action 
on $\,X^*\,$ is proper and one can consider the associated
holomorphic principal $\,\C$-bundle
$$\,\Pi: X^* \longrightarrow S :=X^*/\C\,.$$ 
If $\,S\,$ is compact, we show that $\,X\,$ is 
 $\,\R$-equivariantly biholomorphic to a
model of type CH. Then
Proposition \ref{TYPECH} implies that the globalization $\,X^*\,$ is Stein. We need
a preliminary result. Let the functions $\,\alpha$, $\,\beta\,$ be defined as in Lemma 
\ref{ALPHABETA}.


\bigskip
 \begin{lem}
 \label{COMPACT}
If $\,S\,$ is compact then 
 $\,\alpha$, respectively $\,\beta$, is either pluriharmonic or constantly  equal to
 $\, - \infty$.
 \end{lem}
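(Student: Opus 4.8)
The plan is to use the fundamental properties of $\,\alpha\,$ and $\,\beta\,$ recorded in Lemma~\ref{ALPHABETA} together with the compactness of $\,S\,$ and the K\"ahler--potential arguments of Lemma~\ref{POTENTIAL}. By symmetry it suffices to treat $\,\alpha$. Recall from (ii) of Lemma~\ref{ALPHABETA} that $\,\alpha(\lambda\cdot x)=-{\rm Im}\,\lambda+\alpha(x)$, so $\,\alpha\,$ is not $\,\C$-invariant; however the current $\,i\partial\bar\partial\alpha\,$ \emph{is} $\,\C$-invariant, since $\,\alpha\,$ changes by the pluriharmonic (indeed affine) function $\,-{\rm Im}\,\lambda\,$ along orbits. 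Hence $\,i\partial\bar\partial\alpha\,$ descends to a positive, closed $\,(1,1)$-current $\,\theta\,$ on the base $\,S$, which is a compact hyperbolic Riemann surface, in particular compact K\"ahler.

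The first step is to show $\,\theta\,$ is exact. Since $\,S\,$ is a compact Riemann surface, $\,H^2(S,\R)\cong\R\,$ is generated by the class of a K\"ahler form, so a priori $\,\theta\,$ could have nonzero cohomology class. But $\,\theta=\Pi_*(i\partial\bar\partial\alpha)\,$ locally, and on each trivializing chart $\,\C\times U_j\,$ of the bundle $\,\Pi:X^*\to S\,$ the pullback $\,\Pi^*\theta\,$ equals $\,i\partial\bar\partial\alpha\,$ on that chart up to the affine correction; more precisely one checks directly that $\,i\partial\bar\partial\alpha=\Pi^*\theta\,$ globally on $\,X^*$, and since $\,\alpha\,$ is a globally defined function on $\,X^*$, the current $\,i\partial\bar\partial\alpha\,$ is exact on $\,X^*$. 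Restricting to a section of the bundle (or rather, arguing that a positive closed $(1,1)$-current on $\,S\,$ whose pullback to the total space is $\,\partial\bar\partial$-exact must itself be $\,\partial\bar\partial$-exact on $\,S$, using that $\,H^1(S,\O)\to H^1(X^*,\O)\,$ behaves well, or simply that any local potential on $\,S\,$ differs from $\,\alpha\,$ by something pluriharmonic along fibers) shows $\,\theta\,$ is exact on $\,S$. Then part (ii) of Lemma~\ref{POTENTIAL} forces $\,\theta=0$, i.e.\ $\,\alpha\,$ is pluriharmonic wherever it is finite.

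The second step handles the dichotomy. The function $\,\alpha:X^*\to\R\cup\{-\infty\}\,$ is plurisubharmonic and, by the previous step, pluriharmonic on the open set $\,\{\alpha>-\infty\}$. Its $\,-\infty\,$ locus $\,\{\alpha=-\infty\}\,$ is either empty or, being a closed pluripolar set that is also $\,\C$-invariant and $\,\Pi$-saturated, corresponds to a closed pluripolar subset of $\,S$; but a pluriharmonic function on the connected complement cannot be extended plurisubharmonically across a nonempty proper closed set while taking the value $\,-\infty\,$ on it unless that set is all of $\,S$ (one uses the mean-value/sub-mean-value property and connectedness of $\,S$, together with the fact that $\,\alpha\,$ is continuous as a map into $\,\R\cup\{-\infty\}$ by Lemma~\ref{ALPHABETA}(i), hence $\,\{\alpha=-\infty\}\,$ is both open and closed in $\,S$). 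Therefore either $\,\alpha\equiv-\infty\,$ on $\,X^*$, or $\,\alpha\,$ is finite everywhere and pluriharmonic. The same argument applies verbatim to $\,\beta$, giving the statement.

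The main obstacle I expect is the exactness of $\,\theta\,$ in the first step: one must be careful that the affine nondescending term $\,-{\rm Im}\,\lambda\,$ really does disappear at the level of currents on the base and that a global potential on the total space genuinely produces a global potential on $\,S$. Concretely, the delicate point is to rule out that $\,\theta\,$ is a nonzero multiple of a K\"ahler form: this is exactly where compactness of $\,S\,$ and the compact-K\"ahler $\,\partial\bar\partial$-lemma (Lemma~\ref{POTENTIAL}(ii)) are indispensable, since on a noncompact base there would be no obstruction and $\,\alpha\,$ could well be a nonconstant nonharmonic subharmonic function, as the models of type NCH and NCNH show. A secondary subtlety is the passage from ``pluriharmonic on $\,\{\alpha>-\infty\}$'' to the clean dichotomy, which requires knowing the continuity of $\,\alpha\,$ as an extended-real-valued function so that $\,\{\alpha=-\infty\}\,$ is closed, and invoking connectedness of $\,S$; this is routine once Lemma~\ref{ALPHABETA} is in hand.
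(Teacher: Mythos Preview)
Your approach is essentially the paper's: push $i\partial\bar\partial\alpha$ down to a positive closed $(1,1)$-current $\theta$ on $S$, show $\theta$ is exact, and apply Lemma~\ref{POTENTIAL}(ii). But the step you yourself flag as the ``main obstacle'' is genuinely incomplete. None of your three sketches for exactness of $\theta$ works as stated: pulling a current back along a smooth section is not defined in general; the map $H^1(S,\O)\to H^1(X^*,\O)$ is not the relevant object; and the observation that local potentials on $S$ differ from $\alpha$ by something pluriharmonic, while true, does not produce a \emph{global} potential on $S$ without further cohomological input. The paper's argument here is short and decisive: since the sheaf of smooth functions is fine, $X^*$ is trivial as a \emph{smooth} principal $\C$-bundle, hence $\Pi$ is a homotopy equivalence and $\Pi^*$ is an isomorphism on de Rham cohomology (computed with currents). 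Because $\Pi^*\theta=i\partial\bar\partial\alpha$ is $d$-exact on $X^*$, injectivity of $\Pi^*$ forces $\theta$ to be exact on $S$. (A side remark: you call $S$ ``compact hyperbolic'', but hyperbolicity is only proved later, in Proposition~\ref{ELIMINA}; fortunately only ``compact K\"ahler'' is needed for Lemma~\ref{POTENTIAL}(ii), and every compact Riemann surface is K\"ahler.)

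Your ``second step'' is both unnecessary and, as written, incorrect. Once $\theta=0$ you have $i\partial\bar\partial\alpha=0$ \emph{as a current on all of $X^*$}, not merely on $\{\alpha>-\infty\}$; Weyl's lemma then gives that $\alpha$ is pluriharmonic, hence smooth and finite everywhere, with no residual $-\infty$ locus to discuss. The paper accordingly just assumes $\alpha\not\equiv-\infty$ (so that $\alpha\in L^1_{\rm loc}$ and the current makes sense) and concludes directly. Your attempted connectedness argument fails: continuity of $\alpha$ into $\R\cup\{-\infty\}$ makes $\{\alpha=-\infty\}$ closed but not open, and the heuristic ``a function pluriharmonic on the complement cannot extend plurisubharmonically while taking $-\infty$ on a proper closed set'' is false in general (e.g.\ $\log|z|$ on $\C$ is harmonic off $\{0\}$, subharmonic everywhere, and $-\infty$ at $0$; the point is precisely that its $i\partial\bar\partial$ is \emph{not} zero globally).
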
   
 
 \begin{proof}
Assume that 
 $\,\alpha\,$ is not constantly equal to $\,- \infty$. 
Since  by (ii) of Lemma \ref{ALPHABETA} one has 
$\,\alpha(\lambda \cdot x ) = - {\rm Im}\, (\lambda) + \alpha(x)\,$
for all $\,x \in X^*\,$ and $\,\lambda \in \C$,
 the real, positive $\,(1,1)$-current 
 $\,i\partial  \bar{\partial} \alpha\,$ is $\,\C$-invariant. Therefore it
 pushes down to  a  $\,(1,1)$-current 
 $ \,\theta\,$ on $\,S\,$ such that $\,\Pi^*(\theta) = i \partial  \bar{\partial} \alpha.$
 Note that  $\,\theta\,$ is also positive. 
  
Recall that  all cohomology groups with values in the sheaf of  smooth functions on $\,S\,$ vanish. Thus $\,X^*\,$ is 
 trivial as a differentiable principal $\,\C$-bundle and the
 maps induced by $\,\Pi\,$ in cohomology are isomorphisms.
 Since  $\, i\partial  \bar{\partial} \alpha\,$ is an exact current,  this implies
 that $\,\theta\,$ is also an exact current.
Then, from (ii) of Lemma \ref{POTENTIAL} it follows that   
$\,\theta = 0\,$ and consequently  
 $\, i\partial  \bar{\partial} \alpha=\Pi^*(\theta)=0$. Hence $\,\alpha\,$ is pluriharmonic.
 An analogous argument applies to show that if the function
  $\,\beta\,$ is not constantly equal to $\,- \infty$, then  it is  is pluriharmonic.  
 \end{proof}   


\bigskip
 \begin{pro}
 \label{ELIMINA}
  Let $\,X\,$ be a 2-dimensional, taut, Stein manifold with a proper 
$\,\R\,$-action and assume that $\,S :=X^*/\C\,$ is compact.
Then $\,S\,$  is hyperbolic
and 
$\,X\,$ is $\,\R$-equivariantly biholomorphic
 to a model of type {\rm CH}. In particular $\,X^*\,$ is Stein.
 \end{pro}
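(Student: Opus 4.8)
The plan is to reduce the statement to Proposition \ref{TYPECH} by producing an explicit $\,\R$-equivariant biholomorphism of $\,X\,$ onto a model of type CH. By Theorem \ref{BUNDLE} we already know $\,X^*\,$ is a holomorphic principal $\,\C$-bundle $\,\Pi: X^*\to S\,$ over the Riemann surface $\,S$, and we are assuming $\,S\,$ is compact. The first point is to rule out $\,S\,$ being non-hyperbolic. If $\,S\,$ were compact non-hyperbolic it would be $\,\P^1\,$ or a torus; on $\,\P^1\,$ there is no nontrivial principal $\,\C$-bundle and on a torus the total space of any principal $\,\C$-bundle is non-hyperbolic (it contains the fibers $\,\C\,$, but more to the point $\,X\subset X^*\,$ and one needs $\,X\,$ taut). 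The cleaner route: by Lemma \ref{COMPACT}, $\,\alpha\,$ and $\,\beta\,$ are each either pluriharmonic or $\,\equiv-\infty$, and by (iv) of Lemma \ref{ALPHABETA}, properness forces $\,\max(\alpha,\beta)>-\infty\,$ everywhere, so at least one of them — say $\,\alpha\,$ — is pluriharmonic (hence finite) on all of $\,X^*$.

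Next I would use the pluriharmonic function $\,\alpha\,$ to trivialize the bundle. Since $\,\alpha\,$ is $\,\C$-invariant only up to the additive term $\,-{\rm Im}\,\lambda$, the key observation is that $\,i\partial\bar\partial\alpha=0\,$ descends to $\,0\,$ on $\,S$, and one wants a holomorphic function $\,h\,$ on $\,X^*\,$ with $\,{\rm Im}\,h=\alpha$. For this I would invoke (iii) of Lemma \ref{POTENTIAL}: one needs $\,H^1(X^*,\R)=0$. Here is where I must be careful — $\,X^*\,$ is homotopy equivalent to $\,S\,$ (being a $\,\C$-bundle, hence an affine-space bundle, over $\,S$), so $\,H^1(X^*,\R)=H^1(S,\R)\neq 0\,$ when $\,S\,$ is compact of positive genus. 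So Lemma \ref{POTENTIAL}(iii) does not apply directly to $\,X^*$. Instead I would pass to the universal covering: lift $\,\Pi\,$ to $\,\widetilde\Pi:X^*\to S\,$ pulled back over $\,\Delta\to S$, obtaining a principal $\,\C$-bundle over $\,\Delta$, which is trivial since $\,H^1(\Delta,\mathcal O)=0$. Thus the universal cover $\,\widehat{X^*}\,$ of $\,X^*\,$ is $\,\C$-equivariantly biholomorphic to $\,\C\times\Delta$. The pulled-back $\,\widehat\alpha\,$ is pluriharmonic on $\,\C\times\Delta\,$ and $\,\C$-equivariant in the sense $\,\widehat\alpha(\lambda\cdot x)=-{\rm Im}\,\lambda+\widehat\alpha(x)$; writing $\,\widehat\alpha(z,w)=-{\rm Im}\,z+u(w)\,$ with $\,u\,$ harmonic on $\,\Delta$, and using $\,H^1(\Delta,\R)=0\,$ together with Lemma \ref{POTENTIAL}(iii) on $\,\Delta$, one gets $\,u={\rm Im}\,\phi\,$ for a holomorphic $\,\phi:\Delta\to\C$. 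Then the automorphism $\,(z,w)\mapsto(z-\phi(w),w)\,$ of $\,\C\times\Delta\,$ straightens $\,\widehat\alpha\,$ to $\,-{\rm Im}\,z$.

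Then I would analyze how $\,\Gamma=\pi_1(S)\,$ acts on $\,\C\times\Delta\,$ after this straightening. Equivariance of the straightened $\,\widehat\alpha\,$ under deck transformations and $\,\Gamma$-invariance of $\,\alpha\,$ (since $\,\alpha\,$ lives on $\,X^*$) force each $\,\gamma\in\Gamma\,$ to act as $\,\gamma\cdot(z,w)=(z+c(\gamma),\gamma w)\,$ with $\,c(\gamma)\in\C\,$ locally constant; the cocycle condition makes $\,c:\Gamma\to\C\,$ a homomorphism, and the compatibility ${\rm Im}\,c(\gamma)=0\,$ (forced because $\,\widehat\alpha\,$ and the straightening are compatible with the $\,\Gamma$-action and $\,\widehat\alpha\,$ descends) shows $\,\Psi:=c\,$ maps into $\,\R$. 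This exhibits $\,X^*=\C\times\Delta/\Gamma\,$ as exactly the kind of twisted bundle appearing in the definition of type CH models, with $\,X\,$ cut out inside by $\,\{\alpha<0,\ \beta<0\}$. Since $\,\alpha=-{\rm Im}\,z\,$ and $\,\beta\,$ (also pulled back to $\,\C\times\Delta$) is either $\,\equiv-\infty\,$ (giving $\,H=\{{\rm Im}\,z>0\}$, possibly after also straightening $\,\beta\,$ if it too is pluriharmonic) or equals $\,{\rm Im}\,z - C\,$ for a constant $\,C>0\,$ (giving $\,H=\{0<{\rm Im}\,z<C\}$) — here one uses that $\,\alpha+\beta\,$ is $\,\C$-invariant, continuous, negative, hence descends to a negative function on compact $\,S\,$, therefore constant, pinning $\,\beta=-\alpha+{\rm const}$ — we conclude $\,X\cong H\times\Delta/\Gamma\,$ is a model of type CH. Finally, $\,\Psi\,$ is nontrivial: if it were trivial, $\,X^*\cong\C\times S\,$ would be a trivial bundle and the argument in the Claim inside the proof of Proposition \ref{TYPECH} shows the bundle cocycle is nontrivial precisely when $\,\Psi\neq 0$; but more directly, $\,\Psi=0\,$ would make $\,\alpha\,$ come from a global holomorphic function's imaginary part on $\,X^*$, hence from a harmonic function on compact $\,S$, hence constant, contradicting that $\,X\subsetneq X^*$. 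With $\,X\,$ identified as a genuine type CH model, Proposition \ref{TYPECH}(i) gives that $\,X^*\,$ is Stein, completing the proof. The main obstacle, as flagged, is the cohomological one: $\,H^1(X^*,\R)\neq 0\,$ prevents a direct global trivialization, so the argument must be run upstairs on $\,\C\times\Delta\,$ and then the $\,\Gamma$-action carefully tracked to land in the type CH normal form.
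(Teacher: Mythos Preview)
Your overall strategy matches the paper's: pass to the universal cover of $\,X^*$, straighten the pluriharmonic $\,\alpha\,$ there, read off the $\,\Gamma$-action, and identify $\,X\,$ with a type CH model. The execution, however, has a genuine gap in establishing that $\,S\,$ is hyperbolic. Your direct argument for the torus does not work: you say that over a torus ``the total space of any principal $\,\C$-bundle is non-hyperbolic (it contains the fibers $\,\C$, but more to the point $\,X\subset X^*\,$ and one needs $\,X\,$ taut)'', but non-hyperbolicity of $\,X^*\,$ says nothing about the subdomain $\,X\,$ --- every model of type NCNH sits tautly inside $\,\C^2$. You then write $\,\Delta\,$ for the universal cover of $\,S\,$ before this is justified, so the remainder assumes what must be proved. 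Your $\,\P^1\,$ case is also incomplete: one needs to add that $\,p\mapsto\alpha(0,p)\,$ and $\,p\mapsto\beta(0,p)\,$ are subharmonic on $\,\P^1$, hence constant, so $\,X\,$ would be $\,H\times\P^1$, contradicting Steinness.

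The paper's fix is to \emph{defer} hyperbolicity rather than attack it head-on. After disposing of $\,\P^1\,$ as above, the universal cover $\,\tilde S\,$ is non-compact simply connected, hence $\,\C\,$ or $\,\Delta$. Run your straightening over $\,\tilde S\,$ without yet knowing which: it yields $\,\tilde X\cong H\times\tilde S$. Now invoke Proposition~\ref{COVERING}: since $\,\tilde X\,$ covers the taut $\,X$, it is itself taut; but $\,H\times\C\,$ contains $\,\{z_0\}\times\C\,$ and is not taut, forcing $\,\tilde S=\Delta$. This is the step you are missing. A minor point: your nontriviality argument for $\,\Psi\,$ is tangled (``$\alpha$ constant'' cannot happen since $\,\alpha(\lambda\cdot x)=-{\rm Im}\,\lambda+\alpha(x)$); the clean version is simply that $\,\Psi\equiv 0\,$ gives $\,X=H\times(\Delta/\Gamma)=H\times S\,$ with $\,S\,$ compact, contradicting Steinness of $\,X$.
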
   
 
 \begin{proof}
 First note that  $\,S\,$ can not be biholomorphic to the Riemann
sphere. Indeed  $\,H^1(\P^1(\C),{\mathcal O}) = 0$, thus if $\,S=\P^1(\C)\,$ then
 $\,X^* = \C \times \P^1(\C)$. Moreover
  the functions $\,\P^1(\C) \to \R \cup\{\infty\}$,
 defined by $\,p \to \alpha(0,p)\,$ and 
 $\,p \to \beta(0,p)$, are constant, being subharmonic on $\,\P^1(\C)$.
Since (ii) of Lemma \ref{ALPHABETA} implies that  
$\,X=\{\,(z,p) \in \C \times \P^1(\C) \ :\  \alpha(0,p)<{\rm Im}\,z< -\beta(0,p) \}$,
it follows that $\,X\,$ is  the product of a strip in $\,\C\,$ and $\,\P^1(\C)$.
However $\,X\,$ is Stein, therefore this is impossible.
 
Now let us show that $\,S\,$ is hyperbolic.
Consider the universal covering space  $\,\pi:\tilde X^* \rightarrow X^*\,$ of
$\,X^*\,$ with deck transformation 
group $\,\Gamma.$  The proper $\,\C$-action on $\,X^*\,$ lifts to a proper 
$\,\C$-action
on $\,\tilde X^*$, therefore  $\,\tilde X^*\,$ is a principal $\,\C$-bundle over
 $\,\tilde S \cong \tilde X^* / \C$.
Note that $\,X^*\,$ and $\,\tilde X^*\,$ are  trivial as differentiable principal $\,\C$-bundles over
$\,S\,$ and $\,\tilde S$, respectively. This implies that the Riemann surface
$\,\tilde S\,$ is simply connected, therefore it is non compact and consequently 
$\,\tilde X^*\,$ is $\,\C$-equivariantly
biholomorphic to $\,\C \times \tilde S\,$ (cf.\,Thm.\,\ref{BUNDLE}).
One has a commutative diagram
of holomorphic maps
$$\begin{matrix} 
  \tilde X^*= \C \times \tilde S &   \buildover{ \pi }  \to  &   X^*= \C \times \tilde S/ \Gamma \cr   
   \downarrow  &                       & \downarrow\ \ \ \  \ \ \ 
                                   &                 &                \cr
\tilde S       &       \buildover{ \hat \pi }   \to &   S = \tilde S/\Gamma\ \ \  \,,\cr 
\end{matrix} $$

\nsmallskip
where $\,\hat \pi\,$ is the universal covering of $\,S\,$ with deck transformation group $\,\Gamma$. 


By (iii) of Lemma \ref{ALPHABETA},
 the sum $\, \alpha + \beta\,$ is  $\,\C$-invariant, thus it can
be regarded as a subharmonic function on $\,S$.
Since  $\,S\,$ is compact, $\, \alpha + \beta\,$ is constant.
Moreover polar sets have zero measure, therefore 
if $\,\alpha + \beta \equiv - \infty\,$ then  either $\, \alpha \equiv - \infty\,$ or 
 $\,\beta \equiv - \infty.$
 As a consequence $\,X = \{ \,x \in X^*\  :\  \alpha(x) < 0 \,\}\,$
 or $\,X = \{ \, x \in X^* \ :\  \beta(x) < 0\, \}.$
 On the other hand,
 if $\,\alpha + \beta = -C\,$ for some positive real number $\,C$, 
 one has  $\,X = \{ x \in X^* :   \alpha(x) < 0< -\beta(x) \}\, =\,  \{ x \in X^* : - C < \alpha(x) < 0 \}$.

 First consider the case when $\,X = \{ \,x \in X^*\  :\  \alpha(x) < 0 \,\}$.
Set $\,\tilde \alpha = \alpha \circ \pi\,$ and let 
 $\,\tilde X :=\pi^{-1}(X) = \{ (z,p) \in \C \times \tilde S \ : \  \tilde \alpha(z,p) < 0    \}$.
 Recall that $\,\alpha\,$ is  pluriharmonic by Lemma  \ref{COMPACT},
 therefore so is $\,\tilde \alpha$.
Since $\,\tilde X^*\,$ is simply connected, (iii) of Lemma \ref{POTENTIAL}
implies that there exists a holomorphic function
$\, f : \tilde X^* \rightarrow \C\, $ such that  $\,{\rm Im} \,(f) = \tilde \alpha.$
Moreover, for all $\,(z,p) \in \tilde X^* \,$ one has 
$$\,\tilde \alpha(z,p )= \alpha \circ \pi(z \cdot (0,p) ) =\alpha(z \cdot \pi (0,p))  
=\tilde \alpha(0,p) - {\rm Im} \, z\,,$$ therefore 
 $ f(z,p) = f(0,p) - z$.  
 Then the map defined by 
 $$\, (z,p) \rightarrow (-f(z,p),p)=(z-f(0,p),p) \,$$
  gives a $\,\C$-equivariant biholomorphism 
of $\,\C \times \tilde S\,$ and its restriction to 
 $\,\tilde X\,$  defines an $\,\R$-equivariant biholomorphism onto  $\, \{ (z,p) \in \C \times \tilde S : 0 < {\rm Im}\, z \}$, which is simply connected. Thus $\,\tilde X\,$ 
 can be regarded as the universal covering of $\,X\,$ 
 and since  $\,\tilde X\,$ is taut  by Proposition \ref{COVERING},
    this implies that $\,\tilde S \cong \Delta$, i.e. that $\,S\,$ is hyperbolic.
   
    An analogous argument applies to the cases when 
    $\,X = \{ \, x \in X^* \ :\  \beta(x) < 0\, \}\,$ and
    $\,X = \{\, x \in X^* \ :\  - C < \alpha(x) < 0 \,\}$, showing that $\,S\,$
    is hyperbolic and that $\, \tilde X\,$ is $\,\R$-equivariantly biholomorphic to 
    $\,H \times \Delta$, where $\,H\,$ is given by $\, \{  0 < {\rm Im}\, z \}$,
    $\, \{  {\rm Im}\, z<0 \}\,$ or
 $\, \{ 0< {\rm Im}\,z<C \}$,  for some positive real $\,C$.    
    
    Identify the universal covering $\,\tilde X\,$ with  $\,H \times \Delta\,$ and note that it is 
    $\,\Gamma$-invariant in $\,X^* \cong  \C \times \Delta$.
    In order to describe the  $\,\Gamma$-action, observe that
   every $\,\gamma\,$ in $\,\Gamma\,$ is $\,\C$-equivariant, therefore
there exists a holomorphic map 
 $\,F_\gamma : \Delta \to \C\,$ such that 
 $$\gamma \cdot (z,w) = (z + F_\gamma(w),\gamma \cdot w)\,,$$
 for all  $\,(z,w) \in \C \times \Delta$.
 Since $\,\gamma(H \times \Delta)=H \times \Delta$, it follows that 
$\,{\rm Im}\, F_\gamma \equiv 0\,$ and consequently the holomorphic function
$\,F_\gamma\,$ is a real constant.
Thus
the  $\,\Gamma$-action on  $\,\C \times \Delta\,$ is given 
by $\,\gamma \cdot (z, w)= (z + \Psi(\gamma), \gamma \cdot w)$,
where the group homomorphism $\,\Psi: \Gamma \to \R\,$
is defined by $\,\gamma \to F_\gamma$. 

Finally note that $\, \Ker\,\Psi \not = \Gamma$. Otherwise one has 
$\,X= H \times \Delta/\Gamma =  H \times( \Delta/\Gamma ) =H \times S$.
Since $\,X\,$ is Stein and $\,S\,$
is compact, this gives a contradiction. Thus $\,X\,$
is $\,\R$-equivariantly biholomorphic to a model of type CH and
$\,X^*\,$ is Stein by (i) of Proposition
\ref{TYPECH}.
\end{proof}

 \bigskip
 \begin{rem}
 \label{NONEQ}
Note that two models of type CH, one  of the form 
$\,H \times \Delta/\Gamma$, with $\,H\,$ of finite width,
and one of  the form  $\,H' \times \Delta/\Gamma'$,
with $\,H'\,$ of infinite width, cannot be biholomorphic.
Let  $\,\Gamma\,$ act on $\,H \times \Delta\,$
by   $\,\gamma \cdot (z,w) = (z+ \Psi(\gamma), \gamma \cdot w)\,$  and 
let $\,\Gamma'\,$ act on $\,H' \times \Delta\,$
by  $\,\gamma' \cdot (z,w) = (z+ \Psi'(\gamma'), \gamma' \cdot w)$,
where $\,\Psi:\Gamma \to \R\,$ and $\,\Psi': \Gamma' \to \R\,$
are non trivial homomorphisms. 
Recall that the elements of $\,\Gamma\,$ and $\,\Gamma'\,$ 
are all hyperbolic, i.e. they have two fixed point on the boundary of $\,\Delta$
(see \cite{FaKr}, Cor.$\,$2,$\,$p.$\,$216).

In particular every element of $\,\Gamma\,$ which does not belong to $\,\Ker\, \Psi\,$
has 4 fixed points on the boundary of the universal covering $\,H \times \Delta\,$
of $\,X$, while  an element 
of $\,\Gamma'\,$ has either infinite or 2 fixed points 
on the boundary of
$\,H' \times \Delta$.
\qed
\end{rem}

\bbigskip


\section{models with non compact base}

  \bigskip

Here we consider the models with
base a non compact Riemann surface. Let us start with the hyperbolic case.

\nbigskip
{\bf Type NCH} $\ $Let $\,S\,$ be a non compact hyperbolic Riemann surface.
A model of type NCH with base $\,S\,$ is given by
 $$\, \{\, (z,p) \in \C \times S : a(p) < {\rm Im}\, \,z < - b(p) \,  \}\,,$$ where 
$\,a\,$ and $\, b\,$ are  subharmonic,  continuous functions on $\,S\,$
such that $\, a +b < 0\,$ and  $\, \max \{a(p),b(p)\}  >   - \infty\, $  for all $\,p \in S$.

\nbigskip
{\bf Type NCNH} A model of type NCNH with base $\,S = \C\, $ or $\,S= \C^*\,$
is given by  
$$\,\{\, (z,p) \in \C \times S \ :\  a(p) < {\rm Im} \,z \, \}\quad {\rm or} \quad 
\{\, (z,p) \in \C \times S \ : \ {\rm Im} \,z < -b(p) \, \}\, ,$$
with $\,a$, $\,b\,$   subharmonic, non-harmonic,  real valued,  continuous functions
 on $\,S$.

\nbigskip
On each manifold as above let $\,\R\,$ act  by translations on the first factor.

\bigskip
\begin{pro}
\label{TYPENCH}
Let $\,X\,$ be a model of type {\rm NCH}  or {\rm NCNH} with base $\,S$. Then

\medskip
\item{\rm (i)} the universal globalization of $\,X\,$ is $\,\C \times S$, which is Stein,
\medskip
\item{\rm (ii)}  $\,X\,$ is a taut, Stein manifold with a proper $\,\R$-action.

\end{pro}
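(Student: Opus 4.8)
The plan is to treat part (i) first and then derive (ii) as a consequence, exactly as in the compact‐base case (Proposition \ref{TYPECH}). For (i), I would first observe that $\,X\,$ is orbit‑connected in $\,\C\times S\,$ with $\,\R\,$ acting by translation on the first factor, so Lemma 1.5 in \cite{CIT} identifies $\,\C\times S\,$ as the universal globalization $\,X^*$. The Steinness of $\,\C\times S\,$ is immediate: a non compact Riemann surface is Stein, and the product of two Stein manifolds is Stein. (If $\,S=\C\,$ or $\,S=\C^*\,$ this is even more elementary.) So part (i) reduces to these two standard remarks and no real work is needed once orbit‑connectedness is checked.

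For (ii) I would argue that $\,X\,$ is an $\,\R$‑invariant, locally Stein (indeed locally pseudoconvex) domain in the Stein manifold $\,X^*=\C\times S$: locally over a coordinate disk $\,U\subset S\,$ the manifold $\,X\,$ is a domain in $\,\C\times U\,$ cut out by the subharmonicity‑type inequalities $\,a<\mathrm{Im}\,z<-b\,$ (or a one‑sided version), and since $\,a\,$ and $\,b\,$ are subharmonic and continuous, the defining functions $\,(z,p)\mapsto a(p)-\mathrm{Im}\,z\,$ and $\,(z,p)\mapsto \mathrm{Im}\,z+b(p)\,$ are plurisubharmonic and continuous, so $\,X\,$ is locally pseudoconvex. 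By the Docquier–Grauert theorem \cite{DoGr}, a locally Stein open subset of a Stein manifold is Stein, so $\,X\,$ is Stein; the same local pseudoconvexity (together with properness of the translation action on the first factor) gives that the $\,\R$‑action on $\,X\,$ is proper, as in the proof of Proposition \ref{TYPECH}(ii).

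It remains to prove tautness, and this is where the case division and the real content lie. In the NCNH case $\,S=\C\,$ (resp.\ $\,S=\C^*$), I would reduce to Theorem \ref{TAUTT}: for $\,S=\C\,$ the one‑sided model $\,\{a(p)<\mathrm{Im}\,z\}\,$ is exactly $\,\Omega_a\,$ (after swapping the roles of the two factors), which is taut by Theorem \ref{TAUTT} since $\,a\,$ is real valued, subharmonic, non‑harmonic and continuous; the case $\,\{\mathrm{Im}\,z<-b\}\,$ follows by the biholomorphism $\,(z,p)\mapsto(-z,p)\,$ of $\,\C^2$. For $\,S=\C^*\,$ one passes to the covering $\,\C\to\C^*$, $\,\zeta\mapsto e^{\zeta}$, under which $\,X\,$ pulls back to a domain of the form $\,\Omega_{\tilde a}\subset\C^2\,$ with $\,\tilde a\,$ the (still non‑harmonic, subharmonic, continuous) pullback of $\,a$; this pullback is a holomorphic covering, so Proposition \ref{COVERING} transfers tautness from $\,\Omega_{\tilde a}\,$ back to $\,X$. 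In the NCH case, where $\,S\,$ is non compact hyperbolic, I would use Proposition \ref{PREIM} applied to the projection $\,\Pi:X\to S$: $\,S\,$ is taut (being hyperbolic and, as a non compact Riemann surface, even taut), and over each simply connected coordinate disk $\,U\subset S\,$ one can write $\,a|_U=\mathrm{Im}\,f\,$ and $\,b|_U=\mathrm{Im}\,g\,$ plus a subharmonic remainder; more robustly, $\,\Pi^{-1}(U)\,$ is a sublevel‑type domain in $\,\C\times U\,$ of a continuous plurisubharmonic function, hence taut by Lemma \ref{SUBSEQ}‑style arguments combined with Corollary \ref{SUBLEVEL}, and Proposition \ref{PREIM} then yields tautness of $\,X$. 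The main obstacle is the NCH case: unlike NCNH, there is no single global coordinate reducing $\,X\,$ to an $\,\Omega_a$, so one must genuinely localize over $\,S\,$ and verify that each $\,\Pi^{-1}(U)\,$ is taut using the potential‑theoretic machinery of Section 3 (in particular the characterization via mean values in Lemma \ref{RIFORM}) adapted from the plane to a disk chart, while keeping track of the two‑sided constraint $\,a<\mathrm{Im}\,z<-b\,$ and the condition $\,\max\{a(p),b(p)\}>-\infty$.
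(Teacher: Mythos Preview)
Your treatment of part (i), of Steinness and properness in (ii), and of the NCNH case is essentially the same as the paper's, so those parts are fine.

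The NCH case is where your proposal diverges, and there is both a gap and an unnecessary detour. Your first attempt --- applying Corollary~\ref{SUBLEVEL} to $\Pi^{-1}(U)$ as a sublevel set inside $\C\times U$ --- does not work as stated, because Corollary~\ref{SUBLEVEL} requires the ambient manifold to be taut, and $\C\times U$ is not (the $\C$-factor carries a copy of $\C$). Your fallback, importing the potential-theoretic machinery of Section~3 (Lemma~\ref{RIFORM}, mean-value estimates, etc.), could presumably be made to work but is far heavier than needed.

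The paper's argument uses the hypothesis $\max\{a(p),b(p)\}>-\infty$ in a very direct way that you mention only in passing: given $p\in S$ with, say, $a(p)>-\infty$, continuity gives $a>M$ on a neighbourhood $U$ of $p$, so $\Pi^{-1}(U)$ sits inside $H\times S$ with $H=\{z:\mathrm{Im}\,z>M\}$. Now $H$ is biholomorphic to $\Delta$ and $S$ is hyperbolic, so $H\times S$ \emph{is} taut, and Corollary~\ref{SUBLEVEL} applies cleanly to $\Pi^{-1}(U)$ as a sublevel set of the continuous plurisubharmonic functions $a(p)-\mathrm{Im}\,z$ and $\mathrm{Im}\,z+b(p)$ inside this taut product. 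Proposition~\ref{PREIM} then gives tautness of $X$. No decomposition $a=\mathrm{Im}\,f+\text{(subharmonic)}$, no adaptation of Lemma~\ref{RIFORM}, is required; the single local lower bound on one of $a,b$ is the whole trick.
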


\begin{proof}
(i) Note that $\,X\,$ is orbit-connected in $\,\C \times S$. Then
Lemma$\,$1.5 in \cite{CIT} implies that $\,\C \times S\,$
is the (Stein) universal globalization of $\,X$. 

(ii) Since $\,X\,$ is 
an $\,\R$-invariant submanifold in $\,\C \times S$,
the $\,\R$-action on $\,X\,$
is proper. 
Moreover, $\,X\,$ is given as
the sublevel set of plurisubharmonic functions defined
on the product $\,\C \times S$, which is Stein. Thus $\,X\,$ is Stein.

Finally we show that $\,X\,$ is taut. For $\,X\,$ a model 
of type NCH
consider the projection 
$\,\Pi |_X: X \to S\,$, $\,(z,p) \to p$, onto
 the second factor.
  By Lemma \ref{PREIM} it is sufficient to prove that for every $\,p\,$ in 
  $\,S\,$ there exists a neighborhood $\,U\,$ of $\,\,p$ in $\,S\,$ 
  such that  $(\Pi |_X)^{-1}(U)$ is taut.
Since   $\,\max\{a(p),b(p)\}> - \infty\,$ we may
assume that, e.g. $\,a(p) > - \infty.$ 
By continuity  $\,a > M\,$ on a neighbourhood $\,U\,$ of $\,p\,$, for some
real constant $\,M$. Then  $\,(\Pi |_X)^{-1}(U)\,$ is contained in
$\,H \times S$, with $\, H=\{ z\in \C \ : \   M< {\rm Im}\, z  \}$. Moreover
 the inverse image $\,(\Pi |_X)^{-1}(U)\,$
it is defined as a sublevel
set of continuous plurisubharmonic functions, therefore 
it is taut by Corollary \ref{SUBLEVEL}.
 
Assume now that $\,X\,$ is a model of type NCNH.
Note that if $\,S=\C^*\,$, then the universal covering $\,\tilde X\,$ of $\,X\,$ is contained in $\,\C^2\,$ and it is also 
of type NCNH.  Moreover, by Proposition \ref{COVERING} the manifold
$\,X\,$ is taut if and only if so is $\,\tilde X$.
Thus
we may  assume that $\,S=\C$. Since a domain of the 
form $\,\{\, (z,w) \in \C^2 \ : \ {\rm Im} \,z < -b(w) \, \}\, $ 
is biholomorphic to $\,\Omega_{b}\,$ via the biholomorphism of $\,\C^2\,$ defined
by $\,(z,w) \to (-z,w)$,  all such models are taut by
Theorem \ref{TAUTT}. 
 \end{proof}

\bigskip
\begin{rem}
 \label{BIHOLOCHNC}
Let $\,F\,$ be a $\,\R$-equivariant biholomorphism between 
two models of type NCH  defined by 
$\,\{ \,(z,p) \in \C \times S \ :\  a(p) < {\rm Im}\, z  < -b(p)\,\}\,$ and 
$\, \{\, (z,p') \in \C \times S'\  : \ a'(p') < {\rm Im}\, z<-b'(p')  \,  \}\,.$
Then 
$ F(z,p) = (z + f(p),\varphi(p))\,$ where 
$\,f: S \rightarrow \C\,$  is holomorphic and  
$\,\varphi\ : S \rightarrow S'\,$
is a biholomorphism  such that
$\, a = (a' \circ \varphi - {\rm Im}\, f)\,$ and $\,b = (b' \circ \varphi + {\rm Im}\, f)$.
An analogous statement holds for models of type NCNH.
 \qed
 \end{rem}

 \bigskip
 \begin{pro}
 \label{CORNCH}
Let $\,X\,$ be a 2-dimensional, taut, Stein manifold with a proper  
$\,\R$-action and assume that  the Riemann surface 
$\,S:=X^*/\C\,$ is non compact.
Then $\,X^*\,$ is $\,\C$-equivariantly biholomorphic to $\,\C \times S$, which
is Stein. 
Moreover, depending on hyperbolicity of $\,S\,$ the manifold 
$\,X\,$ is $\,\R$-equivariantly biholomorphic either 
 to a model of type {\rm NCH} or {\rm NCNH}.
\end{pro}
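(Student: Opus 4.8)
The plan is the following. The first assertion requires essentially no work: by Theorem~\ref{BUNDLE}, since $\,S\,$ is non compact, $\,X^*\,$ is $\,\C$-equivariantly biholomorphic to $\,\C\times S$, and since a non compact Riemann surface is Stein, so is $\,\C\times S$. I identify $\,X^*\,$ with $\,\C\times S$, with $\,\R\,$ acting by translation on the first factor, and put $\,a(p):=\alpha(0,p)\,$ and $\,b(p):=\beta(0,p)$. By (ii) of Lemma~\ref{ALPHABETA} one then has $\,\alpha(z,p)=a(p)-{\rm Im}\,z\,$ and $\,\beta(z,p)=b(p)+{\rm Im}\,z$, so that
$$\,X=\{\,(z,p)\in\C\times S\ :\ a(p)<{\rm Im}\,z<-b(p)\,\}\,,$$
while (i), (iii) and (iv) of that lemma show that $\,a,b:S\to\R\cup\{-\infty\}\,$ are subharmonic and continuous, $\,a+b<0$, and $\,\max\{a(p),b(p)\}>-\infty\,$ for all $\,p\,$ (here properness of the $\,\R$-action is used). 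If $\,S\,$ is hyperbolic this is precisely a model of type NCH, and we are done.

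It remains to treat the non-hyperbolic case, i.e. $\,S=\C\,$ or $\,S=\C^*$; I claim that then one of $\,a,b\,$ is identically $\,-\infty$. To argue uniformly I would pass to the universal covering $\,\tilde X\,$ of $\,X$: for $\,S=\C^*\,$ this is the preimage of $\,X\,$ under $\,\C\times\C\to\C\times\C^*$, $\,(z,\zeta)\mapsto(z,e^\zeta)\,$ (as in the proof of Proposition~\ref{TYPENCH}), and for $\,S=\C\,$ the domain $\,X\,$ is already simply connected and plays this role. In either case $\,\tilde X\,$ is taut by Proposition~\ref{COVERING}, its globalization is $\,\C\times\C$, and, $\,\alpha,\beta\,$ being compatible with the $\,\C$-equivariant covering map, $\,\tilde X=\{(z,\zeta)\in\C\times\C:\tilde a(\zeta)<{\rm Im}\,z<-\tilde b(\zeta)\}\,$ with $\,\tilde a,\tilde b\,$ the lifts of $\,a,b$. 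Suppose, for contradiction, that $\,\tilde a\not\equiv-\infty\,$ and $\,\tilde b\not\equiv-\infty$. Then $\,-(\tilde a+\tilde b)\,$ is a positive superharmonic function on $\,\C$, hence a positive constant $\,C$, because a subharmonic function on $\,\C\,$ that is bounded above is constant. Thus $\,-\tilde a=\tilde b+C\,$ is subharmonic; being also subharmonic, $\,\tilde a\,$ is real valued and harmonic on $\,\C$, so by (iii) of Lemma~\ref{POTENTIAL} there is an entire $\,h\,$ with $\,{\rm Im}\,h=\tilde a$, and the shear $\,(z,\zeta)\mapsto(z-h(\zeta),\zeta)\,$ carries $\,\tilde X\,$ onto $\,\{0<{\rm Im}\,z<C\}\times\C$, which contains complex lines and so is not hyperbolic --- contradicting tautness of $\,\tilde X$.

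Hence one of $\,\tilde a,\tilde b$, and therefore --- by surjectivity of $\,\C\to S\,$ --- one of $\,a,b$, is identically $\,-\infty$. Say $\,b\equiv-\infty$; then $\,X=\{(z,p)\in\C\times S:a(p)<{\rm Im}\,z\}\,$ and $\,\tilde X=\Omega_{\tilde a}$. Since $\,\tilde X\,$ is taut, Theorem~\ref{TAUTT} gives that $\,\tilde a\,$ is real valued, subharmonic, non-harmonic and continuous; as $\,\C\to S\,$ is a local biholomorphism, the same properties descend to $\,a$, so $\,X\,$ is a model of type NCNH. If instead $\,a\equiv-\infty$, the biholomorphism $\,(z,p)\mapsto(-z,p)\,$ interchanges the roles of $\,a\,$ and $\,b\,$ and reduces us to the previous situation. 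Together with the first paragraph this also records that $\,X^*=\C\times S\,$ is Stein.

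The one genuinely delicate point is the step, in the non-hyperbolic case, that forces one of the two boundary functions to be $\,\equiv-\infty$: it relies essentially on $\,S\,$ being parabolic --- so that $\,-(\tilde a+\tilde b)\,$ be constant and $\,\tilde a\,$ be globally an imaginary part of a holomorphic function --- and on a hyperbolic base both of these can fail, which is exactly why the models of type NCH and of type NCNH form genuinely distinct families. Everything else is a direct combination of Lemmas~\ref{ALPHABETA} and \ref{POTENTIAL} with Theorems~\ref{BUNDLE} and \ref{TAUTT}.
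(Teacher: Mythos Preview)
Your argument is correct and follows the same route as the paper: trivialize $X^*$ via Theorem~\ref{BUNDLE}, describe $X$ through $a,b$ using Lemma~\ref{ALPHABETA}, and in the parabolic case apply Liouville to $\tilde a+\tilde b$ and reach a contradiction via harmonicity of $\tilde a$ (the paper constructs a map $\C\to X$, you exhibit a biholomorphism onto a strip times $\C$ --- equivalent). The one step you leave implicit is that $\tilde a\not\equiv-\infty$ and $\tilde b\not\equiv-\infty$ actually force $\tilde a+\tilde b\not\equiv-\infty$, so that the Liouville constant is finite; this holds because the polar sets $\{\tilde a=-\infty\}$ and $\{\tilde b=-\infty\}$ have zero measure and so cannot cover $\C$, an argument the paper spells out.
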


\begin{proof}
Since $\,S\,$ is non compact by assumption, the principal $\,\C$-bundle 
$\,X^*$ is trivial (cf.\,Thm.\,\ref{BUNDLE}), implying the first statement.
Regard  
$\,X\,$ as $\,\{\,(z,p) \,\in \C \times S \ :  \ \alpha(z,p) < 0 < -\beta(z,p)\,\}\,$
and define $\,a(p):= \alpha(0,p)\,$ and $\,b(p):=\beta(0,p)$.
Since from (ii) of  Lemma \ref{ALPHABETA} it follows that $\,\alpha(z,p) = -{\rm Im}\,z + \alpha(0,p)\,$ and 
$\,\beta(z,p) =  {\rm Im} \,z + \beta(0,p)$, one has 
$$\,X=\{\,(z,p) \,\in \C \times S \ :  \ a(p) <  {\rm Im}\,z < -b(p)\,\}\,.$$
Moreover the same lemma implies that 
$\,a\,$ and $\, b\,$ are  subharmonic,  continuous functions,
$\, a +b < 0\,$ and  $\, \max \{a(p),b(p)\}  >   - \infty\, $  for all $\,p \in S$.
This concludes the case when $\,S\,$ is hyperbolic.

For $\,S = \C $ or $\,S= \C^*\,$ we first note that $\,a + b\,$ is constant,
being a subharmonic, negative function on $\,S$. We claim that 
$\,a + b \equiv - \infty$.  Assume by contradiction that 
$ a+b = -C $ for some positive  $\,C\,$. Then $\,a= -b-C\,$ is
harmonic and  $\,X = \{\, (z,p) \in \C \times S : a(p) < {\rm Im}\, z <a(p) + C \,\} .$ 
In the case when $\,S = \C,$  there exists a holomorphic  function 
$\,f: \C   \rightarrow \C$ such that $\,{\rm Im}\, f=a$. Then the map 
$\, \zeta \rightarrow(f(\zeta) + iC/2, \zeta)\,$ is  a non constant holomorphic
map from $\,\C\,$ into $\,X$. Since $\,X\,$ is taut, this gives a contradiction.
If  $\,S = \C^*$, one can show that $\,a+b \equiv -\infty\,$ by 
applying  the analogous argument to 
the universal covering of $\,X$, which is taut by Proposition \ref{COVERING}.

Thus $\,a + b \equiv - \infty\,$ and since
the sets $\,\{a=-\infty\}\,$ and $\,\{b=-\infty\}\,$ have zero measure, either
$\,a\,$ or $\,b\,$ are constantly equal to $-\infty$. Assume, e.g. that 
$\,b \equiv -\infty$.
Since $\,X\,$ is taut, $\,a\,$ is necessarily real valued and 
the above argument also proves that $\,a\,$ can not be harmonic.
Thus $\,X\,$ is $\,\R$-equivariantly biholomorphic to a model of type
NCNH.
 \end{proof}

\bigskip
\begin{cor}
\label{AREALL}
Let $\,S\,$ be a non compact Riemann surface and consider the subdomain 
of $\,\C \times S\,$ defined by 
$$\, \Omega :=\{\, (z,p) \in \C \times S : a(p) < {\rm Im}\, \,z < - b(p) \,  \}\,,$$
where 
$\,a, b : S \to \{-\infty\} \cup \R\,$ are upper semicontinuous functions.
Then $\,\Omega\,$ is taut if and only if it is a model of type {\rm NCH} or {\rm NCNH}.
\end{cor}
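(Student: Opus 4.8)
The plan is to show the two implications separately, leaning on the structural results already established. For the \emph{if} direction, a model of type NCH or NCNH is taut and Stein by Proposition \ref{TYPENCH}(ii), so there is nothing to prove. The substance is the \emph{only if} direction: assuming $\,\Omega\,$ is taut, I must verify that $\,a\,$ and $\,b\,$ have exactly the regularity and non-degeneracy properties required of a model, and (when $\,S\in\{\C,\C^*\}$) that one of them is identically $\,-\infty\,$ and the other is non-harmonic.

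First I would observe that $\,\R\,$ acts on $\,\Omega\,$ properly by translation in the first factor, and that $\,\Omega\,$ is orbit-connected in $\,\C\times S$, so by Lemma 1.5 of \cite{CIT} the universal globalization of $\,\Omega\,$ is $\,X^*=\C\times S$; moreover $\,\Omega\,$ is Stein (being taut and contained in $\,\C^n\,$ locally, or more directly: taut domains in complex manifolds are hyperbolic, and here the relevant Stein-ness follows once we know $\,\Omega\,$ sits in the product). Then I would invoke Lemma \ref{ALPHABETA}: the functions $\,\alpha,\beta\,$ attached to this globalization are continuous and plurisubharmonic, $\,\alpha+\beta<0$, and properness of the $\,\R$-action gives $\,\max(\alpha(x),\beta(x))>-\infty\,$ for all $\,x$. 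Setting $\,a(p):=\alpha(0,p)\,$ and $\,b(p):=\beta(0,p)$, the transformation law in Lemma \ref{ALPHABETA}(ii) shows $\,\alpha(z,p)=-\mathrm{Im}\,z+a(p)\,$ and $\,\beta(z,p)=\mathrm{Im}\,z+b(p)$, hence $\,\Omega=\{(z,p):a(p)<\mathrm{Im}\,z<-b(p)\}\,$ with the \emph{same} $\,a,b\,$ up to the harmless indeterminacy of the description, and $\,a,b\,$ are subharmonic, continuous, $\,a+b<0$, $\,\max\{a,b\}>-\infty$. This is precisely the definition of a model of type NCH when $\,S\,$ is hyperbolic, so that case is done.

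It remains to handle $\,S=\C\,$ and $\,S=\C^*$, which is exactly the content of the second half of the proof of Proposition \ref{CORNCH}. Here $\,a+b\,$ is a negative subharmonic function on a parabolic surface, hence constant; if $\,a+b\equiv-C\,$ with $\,C>0\,$ then $\,a=-b-C\,$ is harmonic, and writing $\,a=\mathrm{Im}\,f\,$ (directly on $\,\C$, or after passing to the universal covering when $\,S=\C^*$, using Proposition \ref{COVERING}) produces a non-constant holomorphic map $\,\zeta\mapsto(f(\zeta)+iC/2,\zeta)\,$ into $\,\Omega$, contradicting tautness. So $\,a+b\equiv-\infty$; since $\,\{a=-\infty\}\,$ and $\,\{b=-\infty\}\,$ are polar, hence of measure zero, one of $\,a,b\,$ is identically $\,-\infty$, say $\,b\equiv-\infty$, and then $\,\Omega=\{(z,p):a(p)<\mathrm{Im}\,z\}$. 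Tautness forces $\,a\,$ real valued (otherwise $\,\Omega\,$ contains a complex line), and non-harmonic by the same $\,f\mapsto(f,\cdot)\,$ argument; this is the definition of a model of type NCNH.

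The main obstacle, such as it is, is not any single hard estimate but making sure the hypotheses of the cited results genuinely apply: in particular that $\,\Omega\,$ is holomorphically separable / Stein so that Lemma \ref{ALPHABETA} is available (this needs that $\,\Omega\,$, as a taut domain sitting inside $\,\C\times S\,$ with $\,S\,$ a non-compact Riemann surface, is Stein — which one can get from the fact that $\,\C\times S\,$ is Stein and $\,\Omega\,$ is a sublevel set of the continuous plurisubharmonic functions $\,\alpha\,$ and $\,\beta$, once continuity/plurisubharmonicity of those is in hand, or one argues that a taut domain is hyperbolic and then uses that $\,\Omega\,$ is locally Stein in the Stein manifold $\,\C\times S\,$ together with \cite{DoGr}), and verifying orbit-connectedness so that \cite{CIT} identifies the globalization with $\,\C\times S$. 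Once those technical points are cleanly dispatched, the corollary is an immediate repackaging of Propositions \ref{TYPENCH} and \ref{CORNCH}.
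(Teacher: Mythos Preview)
Your approach is essentially the paper's: reduce to Proposition~\ref{CORNCH} once you know $\Omega$ is Stein, and the case split hyperbolic / non-hyperbolic is handled exactly as you describe.

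The one point that needs tightening is the Steinness of $\Omega$, which you correctly flag as the main obstacle. Your first suggestion (sublevel sets of $\alpha,\beta$) is circular, as you note, and your second (``taut $\Rightarrow$ hyperbolic $\Rightarrow$ locally Stein'') is not a valid chain of implications as stated: hyperbolicity alone does not give local pseudoconvexity. The paper closes this gap by passing to the universal covering $\,Id\times\pi:\C\times\tilde S\to\C\times S\,$ with $\tilde S\in\{\C,\Delta\}$; by Proposition~\ref{COVERING} the preimage $(Id\times\pi)^{-1}(\Omega)$ is taut, and since it is now a domain in $\C^2$ one can invoke the theorem (\cite{Kob},~Thm.~5.4.1) that taut domains in $\C^n$ are pseudoconvex, hence Stein. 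This makes the preimage locally Stein in $\C\times\tilde S$, so $\Omega$ is locally Stein in $\C\times S$, and then \cite{DoGr} gives Steinness of $\Omega$. Once you insert this lifting step, your argument is complete and matches the paper's.
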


\begin{proof}
First note that if $\,\Omega\,$ is taut, then it is Stein. For this consider
the universal covering 
$\,Id \times \pi: \C \times \tilde S \to \C \times S$, where $\,\tilde S=\C\,$ or
$\,\tilde S=\Delta\,$. Then Proposition \ref{COVERING} applies to show that 
the inverse image $\,(Id \times \pi)^{-1}(\Omega)\,$ is a taut
domain of $\,\C^2$.
Thus  it is Stein by Thm.$\,$5.4.1 in \cite{Kob}
 and consequently  it is locally Stein in $\,\C \times \tilde S$.
It follows that  
$\,\Omega\,$ is locally Stein in $\,\C \times S$, which  is Stein. Thus 
$\,\Omega\,$ is Stein by \cite{DoGr}. 

Then an analogous argument as in the above proof applies to prove that
$\,\Omega\,$ 
is a model of type {\rm NCH} or {\rm NCNH}.
\end{proof}

\nbbigskip


\section{Homotopy of the models}

\bigskip
Let us summarize the main results of the previous sections as follows 
(see Prop. \ref{TYPECH}, \ref{ELIMINA}, \ref{TYPENCH} and \ref{CORNCH}).

\bigskip
\begin{theorem}
\label{MAIN}
Every model of type {\rm CH}, {\rm NCH} or {\rm NCNH} is taut and Stein.
Moreover a 2-dimensional, taut, Stein manifold with a proper 
$\,\R$-action is $\,\R$-equivariantly
biholomorphic to one of them. In particular 
its universal globalization is Stein.
\end{theorem}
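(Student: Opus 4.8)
The plan is to assemble Theorem \ref{MAIN} directly from the propositions already proved, treating it as a packaging statement rather than a fresh argument. First I would dispatch the ``models are taut and Stein'' half: by Proposition \ref{TYPECH}(ii) every model of type CH is a taut, Stein manifold with a proper $\,\R$-action, and by Proposition \ref{TYPENCH}(ii) the same holds for every model of type NCH and NCNH. This covers the first sentence with no further work.

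Next I would handle the converse, that an arbitrary $\,2$-dimensional, taut, Stein manifold $\,X\,$ with a proper $\,\R$-action is $\,\R$-equivariantly biholomorphic to one of the three model families. The case split is dictated by Theorem \ref{BUNDLE}: the $\,\C$-action on $\,X^*\,$ is proper, so $\,X^*\,$ is a holomorphic principal $\,\C$-bundle over the Riemann surface $\,S:=X^*/\C$, and one distinguishes according to whether $\,S\,$ is compact. If $\,S\,$ is compact, Proposition \ref{ELIMINA} says $\,S\,$ is hyperbolic and $\,X\,$ is $\,\R$-equivariantly biholomorphic to a model of type CH. If $\,S\,$ is non compact, Proposition \ref{CORNCH} says $\,X\,$ is $\,\R$-equivariantly biholomorphic to a model of type NCH (when $\,S\,$ is hyperbolic) or of type NCNH (when $\,S = \C\,$ or $\,S = \C^*$, the only non-hyperbolic non-compact possibilities once the sphere has been ruled out). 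Since $\,X\,$ hyperbolic forces $\,S\,$ hyperbolic unless $\,S\cong\C$ or $\,\C^*$, these exhaust all cases.

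Finally, the ``universal globalization is Stein'' clause follows immediately: in each of the three cases the cited proposition (Proposition \ref{TYPECH}(i), respectively Proposition \ref{TYPENCH}(i)) asserts that the globalization of the model — hence, via the $\,\R$-equivariant biholomorphism, the globalization $\,X^*\,$ of $\,X\,$, by uniqueness of the universal globalization — is Stein. I do not expect a genuine obstacle here; the only thing to be careful about is that the $\,\R$-equivariant biholomorphism $\,X \cong \text{model}\,$ indeed transports universal globalizations, which is immediate from the defining universal property, and that the enumeration of non-compact non-hyperbolic Riemann surfaces is complete (the sphere is excluded inside the proof of Proposition \ref{ELIMINA}, and $\,S\,$ non compact and non hyperbolic leaves exactly $\,\C\,$ and $\,\C^*$). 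So the ``proof'' is essentially a two-line citation of Propositions \ref{TYPECH}, \ref{ELIMINA}, \ref{TYPENCH}, \ref{CORNCH}, organised along the compact/non-compact and hyperbolic/non-hyperbolic dichotomy of the base $\,S$.
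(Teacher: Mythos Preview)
Your proposal is correct and matches the paper's own treatment: Theorem \ref{MAIN} is stated there as a summary with the parenthetical ``(see Prop.\ \ref{TYPECH}, \ref{ELIMINA}, \ref{TYPENCH} and \ref{CORNCH})'' and no further proof, exactly the packaging you describe. The only cosmetic point is that the exhaustion of non-compact non-hyperbolic Riemann surfaces by $\,\C\,$ and $\,\C^*\,$ comes straight from uniformization rather than from hyperbolicity of $\,X$, but this does not affect the argument.
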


\nbigskip

Here we show that in most cases, but not all of them,
the type of a 2-dimensional, taut, Stein manifold
with a proper  $\,\R$-action  is uniquely
determined by its topology.


\bigskip

\begin{pro}
\label{HOMOT}
 Let $\,X\,$ be a 2-dimensional, taut, Stein manifold with a proper  $\,\R$-action
Then $\,X\,$ is homotopically 
equivalent to $\,S$.
\end{pro}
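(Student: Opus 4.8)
The plan is to use the classification (Theorem \ref{MAIN}) to reduce to the three explicit model types and exhibit an explicit deformation retraction onto the base $\,S\,$ in each case; in fact all three cases come down to the same elementary picture in the fibers. First I would recall that by Theorem \ref{MAIN} we may assume $\,X\,$ is one of the models CH, NCH or NCNH, so that $\,X\,$ sits inside a product $\,\C\times S\,$ (or, in type CH, inside $\,\C\times\Delta/\Gamma$) as an $\,\R$-invariant domain whose fiber over each point of the base is a connected, simply connected subdomain of $\,\C\,$ invariant under the real translations $\,z\mapsto z+t$, i.e.\ a horizontal strip of the form $\,\{\,a(p)<{\rm Im}\,z<-b(p)\,\}\,$ with $\,a(p)+b(p)<0\,$ (one of the bounds possibly $\,-\infty$). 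The key point is that each such fiber strip is convex and contractible, and the bounds vary continuously in $\,p\,$ (by continuity of $\,a,b\,$, equivalently of $\,\alpha,\beta$), so one can retract fiberwise in a way compatible with the base projection.

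The concrete step is this. For the non-twisted models (NCH, NCNH) write $\,X=\{(z,p)\in\C\times S : a(p)<{\rm Im}\,z<-b(p)\}\,$ and choose a continuous section of the fiber, e.g.\ a continuous function $\,c:S\to\R\,$ with $\,a(p)<c(p)<-b(p)\,$ for all $\,p\,$ (such a $\,c\,$ exists because $\,a\,$ is upper semicontinuous and $\,-b\,$ is lower semicontinuous with $\,a<-b\,$; for instance one may take $\,c\,$ to be a suitable average of $\,a\,$ and $\,-b\,$, or $\,c\equiv0\,$ when both bounds allow it, possibly after an $\,\R$-translation). Then define $\,F:X\times[0,1]\to X\,$ by
$$F\bigl((z,p),s\bigr)=\bigl({\rm Re}\,z\,\cdot(1-s)+i\bigl((1-s)\,{\rm Im}\,z+s\,c(p)\bigr),\,p\bigr).$$
This is continuous, $\,F(\cdot,0)={\rm id}_X$, and $\,F((z,p),1)=(ic(p),p)$, which lies in the image of the section $\,p\mapsto(ic(p),p)\,$; moreover $\,F\,$ maps $\,X\,$ into $\,X\,$ at every time because for each fixed $\,p\,$ the segment in $\,\C\,$ from $\,z\,$ to $\,ic(p)\,$ stays inside the (convex) strip $\,\{a(p)<{\rm Im}\,\zeta<-b(p)\}$. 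Composing with the homeomorphism of the section onto $\,S\,$ gives a deformation retraction of $\,X\,$ onto $\,S$, hence $\,X\simeq S\,$ homotopically.

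For the twisted model of type CH, $\,X=H\times\Delta/\Gamma\,$ with $\,\Gamma\,$ acting by $\,\gamma\cdot(z,w)=(z+\Psi(\gamma),\gamma\cdot w)$, the same fiberwise retraction works upstairs on $\,H\times\Delta$: pick $\,c\in\R\,$ with $\,c\in H\,$ (taking $\,H\,$ to be one of the normalized strips $\,\{0<{\rm Im}\,z\}$, $\,\{{\rm Im}\,z<0\}$ or $\,\{0<{\rm Im}\,z<C\}$, choose e.g.\ $\,c=i/2\,$ or $\,c=iC/2\,$) and contract each fiber of $\,H\times\Delta\to\Delta\,$ linearly to the constant slice $\,\{c\}\times\Delta$. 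Because $\,c\,$ is a \emph{real} constant and $\,\Psi(\gamma)\in\R$, this homotopy commutes with the $\,\Gamma$-action, so it descends to a deformation retraction of $\,H\times\Delta/\Gamma\,$ onto $\,\{c\}\times\Delta/\Gamma\cong\Delta/\Gamma=S$. Hence again $\,X\simeq S$.

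The only genuine obstacle is the bookkeeping in the twisted case: one must make sure the chosen slice is $\,\Gamma$-equivariantly preserved, and here the fact (established in the proof of Proposition \ref{ELIMINA} and Remark \ref{BIHOLOCH}) that $\,\Gamma\,$ acts on the $\,\C$-factor purely by the \emph{real} translations $\,z\mapsto z+\Psi(\gamma)\,$ is exactly what makes a constant real slice $\,\{c\}\times\Delta\,$ invariant. Everything else — continuity of the retraction, convexity of the fiber strips, existence of the continuous separating function $\,c\,$ in the non-twisted cases — is routine once the classification is invoked.
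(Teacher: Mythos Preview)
Your argument for types NCH and NCNH is essentially correct (modulo the mild point that a ``suitable average of $a$ and $-b$'' is not directly available when one of them takes the value $-\infty$; an insertion--type argument or a partition of unity is needed, but this is routine).

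There is, however, a genuine error in the CH case. You claim that the linear fiberwise contraction of $\,H\times\Delta\,$ onto a constant slice $\,\{c\}\times\Delta\,$ commutes with the $\,\Gamma$-action, but it does not. With $\,F_s(z,w)=((1-s)z+sc,\,w)\,$ one has
$$F_s\bigl(\gamma\cdot(z,w)\bigr)=\bigl((1-s)z+(1-s)\Psi(\gamma)+sc,\ \gamma\cdot w\bigr),\qquad
\gamma\cdot F_s(z,w)=\bigl((1-s)z+sc+\Psi(\gamma),\ \gamma\cdot w\bigr),$$
and these differ by $\,s\,\Psi(\gamma)\,$ in the first coordinate, which is nonzero for $\,s>0\,$ since $\,\Psi\,$ is nontrivial by the very definition of type CH. Equivalently: a constant map $\,w\mapsto c\,$ can never be a $\,\Gamma$-equivariant section of $\,H\times\Delta\to\Delta\,$ (that would force $\,\Psi\equiv 0$), so there is no constant slice to retract onto. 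Your justification ``because $\,c\,$ is a real constant and $\,\Psi(\gamma)\in\R$'' would be pertinent if you only moved $\,{\rm Im}\,z\,$; that homotopy \emph{is} $\,\Gamma$-equivariant, but its time-$1$ image is the affine $\,\R$-bundle $\,\{\,{\rm Im}\,z={\rm Im}\,c\,\}\times\Delta/\Gamma\,$ over $\,S$, not $\,S\,$ itself, and a further (nontrivial) retraction is still required.

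The paper's proof sidesteps this and handles all cases uniformly, without invoking the classification. It uses that $\,X^*\,$ is always trivial as a \emph{smooth} principal $\,\C$-bundle (the sheaf of smooth functions being fine), writes $\,X=\{(z,p):a(p)<{\rm Im}\,z<-b(p)\}\,$ in such a smooth trivialization (so $\,a,b\,$ need no longer be subharmonic), and builds a smooth section $\,\theta:S\to X\,$ by a partition of unity: choose real constants $\,M_j\,$ with $\,a<M_j<-b\,$ on $\,U_j\,$ and set $\,\theta=\sum_j iM_j\psi_j$. The straight-line homotopy $\,((z,p),t)\mapsto(z+t(\theta(p)-z),p)\,$ then retracts $\,X\,$ onto the image of $\,\theta\cong S$. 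In the CH case this amounts precisely to replacing your constant $\,c\,$ by a genuine (nonconstant, $\,\Gamma$-equivariant) smooth section, which is what your argument is missing.
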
 

\begin{proof}
We first find a smooth global section of the restriction
 $\,\Pi|_X : X \rightarrow S\,$ of $\,\Pi\,$ to $\,X$.  
In a given smooth, trivialization $\,\C \times S\, $ of $\,X^*\,$ one has 
$X = \{ (z,p)\in \C \times S\  :\  a(p) < {\rm Im}\, z < - b(p) \}$, with $\,a\,$ and $\,b\,$
continuous functions on $\,S\,$ (maybe no longer subharmonic)
with values in $\,\{- \infty \} \cup  \R$. 
Moreover, by continuity of $\,a\,$ and $\,b\,$
one can choose  a locally finite covering  $\,\{ U_j \}\,$
of $\,S\,$ and  real constants $\,M_j\,$ such that 
 $\,a<M_j<-b\,$ on each $\,U_j$. Thus the constant functions
  $\,iM_j\,$ can be regarded as 
 smooth local sections of  $\,\Pi |_X$. Choose a 
 smooth partition of unity  $\,\{\psi_j\}\,$
 subordinated to $\,\{ U_j \}$. Then $\,\theta = \sum_j i M_j \psi_j \,$
 defines  a smooth global section of  $\,\Pi|_X$, since $\,  a(p) < {\rm Im}\, \theta < - b(p) \,$
 on $\,S$.
 
 Finally note that the map $\,X\times [0,1] \to X\,$ defined
 by
 $\,((z,p),t) \to (z + t(\theta(p)-z), \,p)\,$ is a  
homotopy equivalence, showing that $\,S\,$ is a strong deformation retract of 
 $\,X$.
  \end{proof}

 \bigskip
 \begin{cor}
 \label{TOPOL}
  Let $\,X\,$ be a 2-dimensional, taut Stein manifold with a proper  $\,\R$-action.
Then 

\medskip
\item{\rm (i)}
$\,X\,$ is $\,\R$-equivariantly biholomorphic to a model of type {\rm CH} 
if and only if $\,H^2(X,\Z) \neq 0\,$, 

\medskip
\item{\rm (ii)}
 if $\,H^2(X,\Z) = 0\,$
and $\,\pi_1(X)  \, $  is neither trivial,  nor isomorphic to  
 $\,\Z$,  then $\,X\,$ is $\,\R$-equivariantly biholomorphic to a 
 model of type {\rm NCH},

\medskip
\item{\rm (iii)}
if $\pi_1(X) = 0 \ $ or  
 $\,\pi_1(X) = \Z,$ then $\,X\,$ is $\,\R$-equivariantly biholomorphic to 
 a model of type {\rm NCH} {\rm (}type {\rm NCNH}{\rm )} if and only if it
 {\rm (}does not{\rm )} 
 admits a non constant, bounded
 holomorphic $\,\R$-invariant function.
  \end{cor}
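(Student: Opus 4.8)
The plan is to combine the homotopy equivalence $X \simeq S$ from Proposition \ref{HOMOT} with the type classification of Theorem \ref{MAIN}, reading off the topological invariants of $X$ from those of the base Riemann surface $S$. First I would handle (i): if $X$ is of type CH, then $X$ is an $\,\R$-invariant strip bundle over a compact hyperbolic $S = \Delta/\Gamma$, and the computation in the proof of Proposition \ref{TYPECH} (or Remark \ref{NONEQ}) shows that $X$ is a nontrivial circle-bundle-like object over $S$; concretely, since $X \simeq S$ and $S$ is a compact orientable surface, $H^2(X,\Z) \cong H^2(S,\Z) \cong \Z \neq 0$. Conversely, if $X$ is of type NCH or NCNH then its base $S$ is a non compact Riemann surface, hence homotopy equivalent to a wedge of circles (a $1$-complex), so $H^2(S,\Z)=0$ and therefore $H^2(X,\Z)=0$ by Proposition \ref{HOMOT}. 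This gives the stated equivalence in (i).

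For (ii), assume $H^2(X,\Z)=0$; by (i) the manifold $X$ is of type NCH or NCNH, so $S$ is non compact and $\pi_1(X)\cong\pi_1(S)$ is free. If in addition $S=\C$ then $\pi_1(X)=0$, and if $S=\C^*$ then $\pi_1(X)\cong\Z$; these are exactly the two cases excluded by the hypothesis of (ii). Hence under the hypotheses of (ii) the base $S$ is neither $\C$ nor $\C^*$, so by the dichotomy in the classification $S$ must be non compact hyperbolic, i.e. $X$ is of type NCH. (One must also note that a non compact hyperbolic $S$ always has non-abelian free fundamental group unless it is simply connected—but a simply connected hyperbolic $S$ is $\Delta$, which is excluded since $\pi_1(X)$ is assumed nontrivial—so the hypotheses are consistent and force type NCH.)

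For (iii), suppose $\pi_1(X)=0$ or $\pi_1(X)\cong\Z$. Then $S$ is simply connected ($S=\C$ or $\Delta$) or $S\cong\C^*$; I would treat the hyperbolic sub-case ($S=\Delta$ or, after passing to the universal cover, the $\Z$-case with $S$ an annulus) versus $S=\C$ or $\C^*$. The key point is that a model is of type NCNH precisely when $S=\C$ or $\C^*$, and in that situation $X$ is (up to the biholomorphism $(z,p)\mapsto(-z,p)$) a domain $\Omega_a$ over $S$ with $a$ a non-harmonic, real-valued, continuous subharmonic function; since $a$ is a proper exhaustion datum that is unbounded (non-constant subharmonic on $\C$ or $\C^*$), the function $\operatorname{Im}z - a(p)$ together with the subharmonicity shows there is no non constant bounded holomorphic $\,\R$-invariant function—an $\,\R$-invariant holomorphic function descends to $S=\C$ or $\C^*$ and hence, if bounded, is constant by Liouville. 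Conversely, if $X$ is of type NCH then its base $S$ is non compact hyperbolic, so $S$ carries non constant bounded holomorphic functions (e.g. pulled back from $\Delta$ via a non constant holomorphic map, or directly since $S\subset\Delta$-type domains admit them), and pulling back through $\Pi|_X$ gives the required non constant bounded holomorphic $\,\R$-invariant function on $X$. I would phrase this as: $X$ admits such a function iff $S$ does, and $S$ admits one iff $S$ is hyperbolic (for $S$ non compact, $S\neq\C,\C^*$), which for $S$ in the list $\{\C,\C^*,\Delta,\text{annulus}\}$ is exactly the type-NCH case.

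The main obstacle I anticipate is the equivalence in (iii): one must be careful that "non constant bounded holomorphic $\,\R$-invariant function on $X$" really corresponds to "non constant bounded holomorphic function on $S$", which requires knowing that every $\,\R$-invariant holomorphic function on $X$ extends $\,\C$-equivariantly to $X^*$ (by the universal property, Theorem stated in Section 2) and hence descends to $S=X^*/\C$; one also needs that a non constant bounded holomorphic function on $S$ exists exactly when $S$ is hyperbolic among the four surfaces in question, i.e. a small Liouville-type argument for $\C$ and $\C^*$ and a standard existence statement for hyperbolic $S$. Everything else reduces to the homotopy equivalence of Proposition \ref{HOMOT} and bookkeeping with the list of models in Theorem \ref{MAIN}.
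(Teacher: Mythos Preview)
Your approach is essentially the same as the paper's: transfer the invariants $H^2$ and $\pi_1$ to $S$ via Proposition~\ref{HOMOT}, then read off the type from the classification of Riemann surfaces, and for (iii) observe that an $\R$-invariant holomorphic function on $X$ descends to $S$, so the question reduces to Liouville on $\C,\C^*$ versus existence of bounded holomorphic functions on a hyperbolic $S$. Two minor corrections: in (ii), your parenthetical claim that a non-compact hyperbolic $S$ has non-abelian free $\pi_1$ unless simply connected is false ($\Delta^*$ and annuli have $\pi_1\cong\Z$), but this remark is unnecessary for the argument and can simply be deleted; in (iii), your list of candidate bases should read $\{\C,\C^*,\Delta,\Delta^*,\text{annulus}\}$ --- you omitted $\Delta^*$, though the same reasoning applies to it.
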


\begin{proof}
(i) and (ii) are direct consequences of the above proposition.
For (iii)  note that an $\,\R$-invariant holomorphic function
on $\,X\,$ pushes down to a holomorphic function on $\,S$. 
Moreover, the assumption
on the fundamental group implies that $\,S\,$ is biholomorphic
to one of the following domains $\,\C$, $\,\C^*$, $\,\Delta$, $\,\Delta^*\,$ or an annulus.
This implies the statement.
 \end{proof}

 \bigskip

 \begin{rem}
 \label{ESEMPI} Let $\,X\,$ be a taut, Stein surface such that 
 either $\,\pi_1(X) = 0\,$ or $\,\pi_1(X) = \Z$. Then, for different proper 
 $\,\R$-actions, the manifold $\,X\,$ may be $\,\R$-equivariantly biholomorphic
 to models of
 different types.
 
 As an example consider the unbounded realization of the unit ball
 of $\,\C^2\,$ given by $\,X=\{\,(u,v) \in \C^2 \ : |v|^2<{\rm Im}\,u \}\,$ and the two
 different $\,\R$-actions on $\,X\,$ defined by 
  $$t\diamond (u,v):= (u +t,v), \quad \quad t\ast (u,v):= (u-2tv+it^2 , v-it)\,.$$
Such actions appear in \cite{FaIa} as normal forms of parabolic elements
in the automorphism group of $\,X$.
 It is clear that the globalization with respect to the first action is
 $\,\C^2$ and its $\,\C$-quotient is $\C$.
 
 Note that the second $\,\R$-action extends to a $\,\C$-action on $\,\C^2\,$
 and a simple computation shows that
 $\,\C \ast X = \{\,(u,v) \in \C^2 \ : {\rm Im}\, u > ({\rm Im}\, v)^2 - ({\rm Re\, v})^2\, \}$.
 Moreover, one checks that $\,X\,$ is orbit-connected in  $\,\C \ast X$. Then
Lemma$\,$1.5 in \cite{CIT} implies that $\,\C \ast X\,$ is the universal globalization
 with respect to the local $\,\C$-action on $\,X\,$ induced by the second $\,\R$-action. 
 Let $\,\H=\{ \,z \in \C \ : \ 0<{\rm Im}z \}$.
 One has a $\,\C$-equivariant  biholomorphism
 $$\,\Psi: \C \times \H \to  \C \ast X\,, \quad \quad 
 (\lambda, u) \to \lambda \ast (u,0) = (u + i\lambda^2,-i \lambda)\,.$$ 
 Therefore  the $\,\C$-quotient of  $\,\C \ast X\,$ is biholomorphic 
 to $\,\H$.
 This completes the example in the case when the fundamental group
  is trivial. 
 
 A similar example with fundamental group isomorphic to $\,\Z\,$
 is as follows. Since $\,\Psi\,$ is a biholomorphism,
 $\,X':=\Psi^{-1}(X)=\{\,(u,v) \in \C^2 \ : \  {\rm Im}\, v > 2 ({\rm Im}\, u)^2 \}\,$ is also a model for the unit ball of $\,\C^2$. On this model the above actions look like
 $$t\diamond (u,v):= (u,v+t), \quad \quad t\ast (u,v):= (u+t, v)\,.$$
 Then the $\,\Z$-action on $\,X'\,$ defined by 
 $\,n\cdot (u,v):= (u+n, v+n)\,$ commutes with both the $\,\R$-actions.
 Thus such $\,\R$-actions push down to proper $\,\R$-actions on the quotient 
 $\,X'/\Z$, whose fundamental group is $\,\Z$.  Observe that 
  $\,X'/\Z\,$ is taut by Prop. \ref{COVERING} and it is Stein by 
  \cite{FaIa}
 
 Finally note that the restrictions $\,X' \to \C\,$ and $\,X' \to \H\,$
 to $\,X'\,$ of the projections of the associated holomorphic principal
 $\,\C$-bundles are $\,\Z\,$ equivariant. Thus they factorize to
 the restrictions $\,X'/\Z\to \C^*\,$ and $\,\:X'/\Z \to \Delta^*=\H/\Z\,$
 to $\,X'/\Z\,$ of the projections of the holomorphic principal
 $\,\C$-bundles associated to the pushed down $\,\R$-actions on $\,X'/\Z$.
 Since the bases 
 of this bundles are $\,\C^*\,$ and $\,\Delta^*$, this shows that also in this case
 the type of $\,X\,$ depends on the chosen $\,\R$-action.
  \qed
  \end{rem}

  \hfill

\nbbigskip

\section{ taut Hartogs  domains}

\bigskip

As an application of the given classification,
we give necessary and sufficient conditions for 
tautness of (non-complete)  Hartogs domains over
a non compact  Riemann surface $\,S$.
A complete Hartogs domain over $\,S\,$ is given by
$$\,\{\, (u,p) \in \C \times S \ : \  |u| < e^{-b(p)}\, \}\,,$$ 
with  $\,b : S \rightarrow \R \cup \{ - \infty \}\, $ an 
upper semicontinuous function. 
A non-complete Hartogs domain over $\,S\,$ is given  by
$$\,\{\, (u,p) \in \C \times S \ : \  e^{a(p)}< |u| < e^{-b(p)}\, \}\,,$$
where $\,a,b : S \rightarrow \R \cup  \{- \infty \} \,$ 
are upper semicontinuous functions with $\,a+b<0$.

We wish to determine under which conditions on $\,a\,$ and $\,b\,$ such
domains  are taut. 
A result of Thai-Duc (\cite{ThDu}), which applies in a more general context, implies that
a complete Hartogs domain  is taut if and only if $\,S\,$ is hyperbolic and 
 $\,b\,$ is a real valued, subharmonic, continuous
function.
 The following proposition gives a characterization of non-complete Hartogs domains
  (cf.\,\cite{Par} for related results).

\bigskip
\begin{pro}
\label{HARTOGS}
Let $\, \Omega = \{ (u,p) \in \C \times S \ : \ e^{a(p)} <  |u| < e^{-b(p)} \} $ be a  non-complete Hartogs domain over a non compact Riemann surface.
Then $\,\Omega\,$ is taut if and only if either 

\medskip
\item{\rm (i)} 
the Riemann surface $\,S\,$ is hyperbolic, the functions $\,a$, $\,b\,$ are  continuos subharmonic  and
$\,max(a(p),b(p))> -\infty\,$ for every $\,p\in S$, or

\medskip
\item{\rm (ii)} 
the Riemann surface $\,S\,$ is not  hyperbolic, $\,b \equiv - \infty\, $
{\rm(}respectively  $\,a \equiv - \infty\,${\rm)}
and $\,a\,$ {\rm(}respectively  $\,b${\rm)}
is a subharmonic, non-harmonic, real-valued, continuous
function.

\end{pro}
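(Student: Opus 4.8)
The strategy is to reduce Proposition~\ref{HARTOGS} to the already-established classification (Theorem~\ref{MAIN}, together with Corollary~\ref{AREALL}) by the standard device of passing to a logarithmic coordinate in the Hartogs variable. Concretely, write $\,u = e^{iz}\,$ on the universal cover, so that $\,|u| = e^{-{\rm Im}\,z}\,$ and the condition $\,e^{a(p)} < |u| < e^{-b(p)}\,$ becomes $\,b(p) < {\rm Im}\,z < -a(p)$. First I would make this precise: let $\,\pi : \widetilde S \to S\,$ be the universal covering, with $\,\widetilde S = \C\,$ or $\,\widetilde S = \Delta$, and pull $\,\Omega\,$ back to $\,\widetilde\Omega \subset \C^* \times \widetilde S$. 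The covering map $\,\C \times \widetilde S \to \C^* \times \widetilde S$, $\,(z,p)\mapsto (e^{iz}, \pi(p))$, exhibits the domain $\,\widehat\Omega := \{(z,p) \in \C\times\widetilde S : b(\pi(p)) < {\rm Im}\,z < -a(\pi(p))\}\,$ as a holomorphic covering of $\,\Omega$. By Proposition~\ref{COVERING}, $\,\Omega\,$ is taut if and only if $\,\widehat\Omega\,$ is taut. Note $\,\widehat\Omega\,$ is exactly a domain of the form treated in Corollary~\ref{AREALL}, with base $\,\widetilde S\,$ (non compact), and with the roles of the two defining functions played by $\,b\circ\pi\,$ and $\,a\circ\pi$.

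\textbf{Applying the classification.} By Corollary~\ref{AREALL}, $\,\widehat\Omega\,$ is taut if and only if it is a model of type NCH or NCNH. I would split into the two cases according to hyperbolicity of $\,\widetilde S$, i.e. of $\,S$. If $\,S\,$ is hyperbolic, then $\,\widetilde S = \Delta\,$ and tautness of $\,\widehat\Omega\,$ is equivalent to: $\,b\circ\pi\,$ and $\,a\circ\pi\,$ are subharmonic, continuous, $\,(a+b)\circ\pi < 0$, and $\,\max\{a(\pi(p)), b(\pi(p))\} > -\infty\,$ everywhere. Since $\,\pi\,$ is a local biholomorphism, each of these properties descends to and ascends from $\,S$; this gives condition (i). If $\,S\,$ is not hyperbolic, then $\,\widetilde S = \C$, and a type NCNH model over $\,\C\,$ forces one of the two functions to be $\,\equiv -\infty\,$ and the other to be real valued, subharmonic, non-harmonic and continuous — on $\,\C$, hence (by descent through $\,\pi$, using that the properties "real valued", "continuous", "subharmonic", "non-harmonic" all pass between $\,S\,$ and its universal cover) on $\,S$. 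Unwinding which of $\,a\circ\pi$, $\,b\circ\pi\,$ vanishes identically yields condition (ii), including the "respectively'' parentheses. Conversely, in each case the hypotheses on $\,a,b\,$ guarantee $\,\widehat\Omega\,$ is a model of the appropriate type, hence taut by Theorem~\ref{MAIN}, hence $\,\Omega\,$ is taut.

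\textbf{The main point to watch.} The only genuinely delicate step is checking that "non-harmonic'' transfers correctly between $\,S\,$ and $\,\widetilde S\,$ when $\,S = \C^*\,$ (so $\,\widetilde S = \C$, $\,\Gamma \cong \Z$): a function on $\,\C^*\,$ is harmonic if and only if its pullback to $\,\C\,$ is harmonic, which is immediate since harmonicity is local, so this causes no trouble — but one should also confirm that a $\,\Gamma$-invariant subharmonic function on $\,\C\,$ which is non-harmonic genuinely gives a non-harmonic function downstairs, and vice versa, which again is local and hence automatic. A second, more bookkeeping-type, issue is to make sure the inequality $\,a+b<0\,$ (which is part of the definition of a non-complete Hartogs domain) is consistent with $\,\widehat\Omega\,$ being a legitimate model: in case (ii) one of the functions is $\,\equiv-\infty$, so $\,a+b\equiv-\infty<0\,$ automatically, and the non-harmonicity of the surviving function is precisely what Corollary~\ref{AREALL} demands; in case (i) the inequality is part of the stated hypothesis. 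I expect the whole argument to be short once the logarithmic substitution and Proposition~\ref{COVERING} are in place; there is no real obstacle beyond carefully matching the hypotheses of Corollary~\ref{AREALL} line by line with the desired conditions on $\,a\,$ and $\,b$.
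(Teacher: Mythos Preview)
Your approach is correct and essentially the same as the paper's: reduce via the logarithmic covering and Proposition~\ref{COVERING} to Corollary~\ref{AREALL}. The only difference is an unnecessary detour: the paper applies the covering $\,F:\C\times S\to \C^*\times S$, $(z,p)\mapsto(e^{-iz},p)$, and invokes Corollary~\ref{AREALL} directly with base $\,S\,$ (which is already a non compact Riemann surface), so no passage to $\,\widetilde S\,$ and no descent argument is needed; the conditions of type NCH/NCNH read off immediately as the conditions (i)/(ii) on $\,a,b\,$ over $\,S$.
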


\begin{proof}
Consider the covering map $\,F: \C \times S \to \C^* \times S\,$ given  by
$\,(z,p) \to (e^{-iz},p)$.  Since the restriction of $\,F\,$ to  $\, F^{-1}(\Omega)\,$ is a covering,
from proposition \ref{COVERING} it follows that 
 $\, F^{-1}(\Omega)\,$ is taut if and only if so is $\,\Omega$. 
Note that $\,\Omega\,$ 
 is invariant under the $\,S^1$-action defined by  $\,e^{i \theta} \cdot (u,p) := (e^{i \theta} u,p).$ As a consequence $\,\R\,$ acts properly on  $\, F^{-1}(\Omega)\,$  
by $\,t \cdot (z,p):= (z+t,p)$.
Then Corollary \ref{AREALL} applies to show that $\, F^{-1}(\Omega)\,$ is
taut if and only if it is 
a model of type NCH or NCNH, depending on hyperbolicity of $\,S$. This implies the
statement.
\end{proof}

 \hfill
  

\medskip
\bigskip

\end{document}